\documentclass[11pt,reqno]{amsart}
\usepackage[T1]{fontenc}
\usepackage{lmodern,amsmath,amssymb,amscd,mathtools,microtype}
\usepackage[a4paper,margin=27mm]{geometry}
\usepackage[unicode,hidelinks]{hyperref}
\usepackage{mathrsfs}
\numberwithin{equation}{section}
\newtheorem{theorem}{Theorem}[section]
\newtheorem{proposition}[theorem]{Proposition}
\newtheorem{lemma}[theorem]{Lemma}
\newtheorem{corollary}[theorem]{Corollary}
\theoremstyle{definition}
\theoremstyle{remark}\newtheorem{remark}[theorem]{Remark}
\newcommand{\C}{\mathbb C}
\newcommand{\D}{\mathbb D}
\newcommand{\Nrd}{\operatorname{Nrd}}
\newcommand{\Trd}{\operatorname{Trd}}

\newcommand{\GL}{\operatorname{GL}}
\newcommand{\SL}{\operatorname{SL}}
\newcommand{\Mat}{\operatorname{Mat}}
\newcommand{\End}{\operatorname{End}}
\newcommand{\Hom}{\operatorname{Hom}}
\newcommand{\Ad}{\operatorname{Ad}}

\newcommand{\diag}{\operatorname{diag}}
\newcommand{\rank}{\operatorname{rank}}
\newcommand{\im}{\operatorname{im}}
\newcommand{\supp}{\operatorname{supp}}
\newcommand{\Lie}{\operatorname{Lie}}
\newcommand{\Gr}{\operatorname{Gr}}

\newcommand{\cS}{\mathcal S}
\newcommand{\cD}{\mathcal D}
\newcommand{\cM}{\mathcal M}
\newcommand{\cF}{\mathcal F}
\newcommand{\cL}{\mathcal L}
\newcommand{\cR}{\mathcal R}
\newcommand{\fh}{\mathfrak h}

\newcommand{\one}{\mathbf 1}
\newcommand{\boxt}{\boxtimes}
\title[Fourier duality and twisted linear periods]
{Uniqueness of twisted linear periods}

\author{Yufeng Li}
\address{Beijing International Center for Mathematical Research, Peking University, No.5 Yiheyuan Road, Beijing 100871, China}
\email{ 2401110016@stu.pku.edu.cn}

\author{Hengfei Lu}
\address{School of Mathematical Sciences, Beihang University, 9 Nansan Street, Shahe Higher Education Park, Changping, Beijing, 102206, China}
\email{luhengfei@buaa.edu.cn}

\author{Junyan Xiao}
\address{Beijing International Center for Mathematical Research, Peking University, No.5 Yiheyuan Road, Beijing 100871, China}
\email{ jyxiao25@stu.pku.edu.cn}

\date{\today}
\subjclass[2020]{22E50}
\keywords{Linear periods, quaternion algebras, equivariant distributions,
Fourier transform, Harish--Chandra descent}

\begin{document}
\raggedbottom
\begin{abstract}
Let $F$ be a non-Archimedean local field of characteristic zero, and let
$\D$ be either $F$ or a quaternion division algebra over $F$.
We prove that every irreducible smooth representation of
$\GL_{p+q}(\D)$ has at most one linear period transforming under any
prescribed character of $\GL_p(\D)\times\GL_q(\D)$.
\end{abstract}
\maketitle
\setcounter{tocdepth}{1}\tableofcontents

\section{Introduction}
Let $F$ be a non-Archimedean local field of characteristic zero, and
write $\nu=|\cdot|_F$. Let $\D$ be either $F$ or a quaternion division
algebra over $F$, and put
\[
 d=\deg_F\D\in\{1,2\},\qquad \dim_F\D=d^2.
\]
Write $x\mapsto\bar x$ for the identity on $F$ or the canonical
quaternionic involution, respectively, and set
$A^\dagger=(\overline{a_{ji}})$ for a matrix $A=(a_{ij})$.
The symbols $\Nrd$ and $\Trd$ denote reduced norm and reduced trace
on the relevant matrix algebra over $F$. When $\D=F$, they are the
ordinary determinant and trace and $x\mapsto\bar{x}$ becomes the identity map. All representations and distributions
have complex coefficients. Fix integers $p,q\geq1$, and set
\[
 G=\GL_{p+q}(\D),\qquad H=\GL_p(\D)\times\GL_q(\D).
\]
Let
\[
 \eta=\alpha\circ\Nrd\boxt\beta\circ\Nrd
\]
be an arbitrary smooth character of $H$, where
$\alpha,\beta:F^\times\to\C^\times$ are smooth characters.
A linear period on a smooth representation $\pi$ is an algebraic linear functional $\ell$ satisfying
\[
\ell(\pi(h)v)=\eta(h)\ell(v)\qquad(h\in H).
\]

\begin{theorem}\label{thm:main}
For every irreducible smooth representation $\pi$ of $G$ and every smooth
character $\eta$ of $H$,
\[
\dim\Hom_H(\pi,\eta)\leq1.
\]
\end{theorem}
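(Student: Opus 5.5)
We would follow the Gelfand--Kazhdan strategy, as in Jacquet--Rallis for $\eta$ trivial and $\D=F$, adapted to a twisted character and to the quaternionic case. Let $\sigma$ be the anti-involution $g\mapsto \epsilon\, g^{\dagger}\,\epsilon$ up to conjugation by a suitable element, or more simply $g\mapsto \theta(g)^{-1}$ composed with transpose-conjugate. Concretely, we take $\sigma(g)=g^\dagger$. This preserves $H$, and the character satisfies $\eta(\sigma(h))=\eta(h)$, since $\Nrd(\bar a)=\Nrd(a)$. By the criterion of Gelfand--Kazhdan (in the version of Aizenbud--Gourevitch), together with the fact that the contragredient satisfies $\pi^\vee\cong\pi\circ\sigma$ up to the standard twist, the inequality $\dim\Hom_H(\pi,\eta)\cdot\dim\Hom_H(\pi^\vee,\eta^{-1})\le 1$ follows. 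This holds as soon as every distribution $T$ on $G$ with $T(h_1gh_2)=\eta(h_1)\eta(h_2)T(g)$ for $h_1,h_2\in H$ is $\sigma$-invariant. The bound for one factor then requires, in addition, the multiplicity-one symmetry: a nonzero $\eta$-period on $\pi$ forces a nonzero $\eta^{-1}$-period on $\pi^\vee$. We would extract this from the same distributional statement via the standard argument that $\pi^\vee\cong\pi^{\sigma}$.

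The key step is the distribution statement. We would reduce it, using the map $g\mapsto g\sigma(g)^{-1}$ onto the symmetric space and Harish-Chandra descent, to a statement on the tangent space at the identity. Put $\theta=\Ad\diag(1_p,-1_q)$ and $X=G/H\hookrightarrow G$ via $s(g)=g\theta(g)^{-1}$. The $H\times H$-equivariant distributions correspond to $H$-conjugation-equivariant distributions on $X$, transforming by a character built from $\eta$. The $\sigma$ action becomes an involution on $X$. We would run the descent of Aizenbud--Gourevitch, Theorem "generalized Harish-Chandra descent":

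\begin{enumerate}
\item Semisimple orbits in $X$ have centralisers which are products of pairs of the same type, namely $(\GL_{p'+q'}(\D'),\GL_{p'}(\D')\times\GL_{q'}(\D'))$ with $\D'$ a division algebra over an extension of $F$. In the quaternionic setting, pairs of the form $(\GL_n(E\otimes\D),\GL_n(\D))$ may also appear.
\item By induction, it suffices to treat a neighbourhood of the nilpotent cone in the tangent space $\mathfrak s=\{\begin{pmatrix}0&A\\B&0\end{pmatrix}\}$, with $H$ acting by $(A,B)\mapsto(h_1Ah_2^{-1},h_2Bh_1^{-1})$.
\end{enumerate}

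The twist by $\eta$ matters here. The relevant character on the centraliser is a restriction of $\eta$, which need not be trivial. This is why the statement is "twisted" and why the descent must keep track of the characters.

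On $\mathfrak s$ we would prove: every distribution that is $H$-equivariant for the character $\eta\otimes\eta^{\sigma}$ and supported on the nilcone is $\sigma$-invariant. We would use the Fourier transform on $\mathfrak s$ with respect to the pairing $\Trd(AB'+BA')$. This preserves the equivariance condition up to replacing $\eta$ by a character modified by $|\Nrd|$ factors, and it commutes with $\sigma$. Take a $\sigma$-anti-invariant distribution $T$. By descent, both $T$ and $\hat T$ are supported on the nilcone $\mathcal N$. Homogeneity under the scalar action $t\cdot(A,B)=(tA,tB)$, combined with the Aizenbud--Gourevitch homogeneity lemma, constrains the possible degrees on each nilpotent orbit. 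We then stratify $\mathcal N$ by $H$-orbits. For each orbit we compute, using the $\eta$ twist, the modular character of the stabiliser and the transverse normal space (a Weil-type or Jacobson--Morozov slice). We then show that no nonzero anti-invariant $\eta$-equivariant distribution with the required homogeneity lives on that orbit.

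We expect this orbit-by-orbit analysis on the nilcone to be the main obstacle, because $\eta$ is arbitrary. For special characters (for example $\alpha\beta^{-1}=\nu^{\pm k}$) the naive argument of "$\sigma$ fixes each orbit, so invariance holds" breaks down, since genuinely non-invariant quasi-invariant distributions could appear. Our first attempt would be to show that every nilpotent $H$-orbit in $\mathfrak s$ is $\sigma$-stable and admits a representative fixed by $\sigma$, with $\sigma$ acting trivially on the component group. Where this alone does not suffice, we would pair the homogeneity degree forced by the Fourier transform, which lies in a half-line of $\tfrac12\dim$, against the degree forced by $\eta$ on the orbit, and show the two are incompatible.
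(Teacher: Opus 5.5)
You have the outer frame right, and it matches the paper: $\sigma(g)=g^\dagger$, the Gelfand--Kazhdan criterion giving $\dim\Hom_H(\pi,\eta)\cdot\dim\Hom_H(\pi^\vee,\eta^{-1})\le1$, and $\pi^\vee\simeq\pi\circ\theta$ with $\theta(g)=(g^\dagger)^{-1}$ to identify the two factors. That last input does not come from ``the same distributional statement''; it comes from $\dagger$-invariance of conjugation-invariant distributions on $G$ (Corollary~\ref{cor:contragredient}).

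The genuine gap is the distributional statement itself, Theorem~\ref{thm:group}, which you leave at ``orbit-by-orbit analysis on the nilcone''. Both tools you propose for that step fail.
\begin{itemize}
\item $\sigma$ does not stabilize nilpotent orbits. On $\mathfrak s$ it acts by $(A,B)\mapsto\pm(B^\dagger,A^\dagger)$, so it sends the orbit of $(A,0)$, $A\neq0$, to the different orbit of $(0,A^\dagger)$. Anti-invariant distributions supported on such exchanged pairs of orbits are therefore not excluded orbitwise.
\item For the action $(A,B)\mapsto(h_1Ah_2^{-1},h_2Bh_1^{-1})$, the Fourier transform for $\Trd(AB'+BA')$ is self-dual, with absolute Jacobian $\lambda(h_1)^q\lambda(h_2)^{-p}\cdot\lambda(h_2)^p\lambda(h_1)^{-q}=1$. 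Contrary to what you state, it does not change the character at all.
\end{itemize}
What remains is comparing the homogeneity degree with the modular degree on nilpotent orbits. That is exactly the Jacquet--Rallis/Chen--Sun mechanism that needs a goodness hypothesis on the twist. You note that special characters break it, but you offer no replacement.

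The paper supplies three ideas that are missing from your plan.
\begin{enumerate}
\item \emph{The twist is confined to a balanced summand.} At each closed double coset (Lemma~\ref{lem:group-normal}) the character is trivial on the unbalanced identity summand and on the generic summand. It survives only as $\gamma(\Nrd u_1/\Nrd v_1)$, with $\gamma=\alpha\beta^{-1}$, on the balanced exchange summand $\Mat_b(\D)^2$. In particular the character on your $\mathfrak s$ at the identity is trivial. Generic eigenvalues give conjugation on $A_C$, which Theorem~\ref{thm:conjugation} handles.
\item \emph{The twisted balanced summand is handled through pencils.} A partial Fourier transform in $Y$ alone turns $(uXv^{-1},vYu^{-1})$ into the pencil action $(uXv^{-1},uZv^{-1})$, with character $\kappa=\gamma\nu^{db}$. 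On regular pencils, a $\GL_2(F)$ rotation reduces the question to conjugation symmetry. On singular pencils, the Kronecker form forces $|\kappa|=\nu^{dj}$ for some $0\le j\le b-1$. The full Fourier transform on pencils is \emph{not} unimodular and replaces $\kappa$ by $\nu^{2db}\kappa^{-1}$. The two resulting constraints contradict each other.
\item \emph{The untwisted unbalanced summand needs its own proof.} Over a quaternion algebra with arbitrary $p\neq q$ it is not covered by Jacquet--Rallis. The paper proves it with an incidence-correspondence support bound, followed by a Radon transform that reduces the symmetry to orthogonal-complement duality on the Grassmannian.
\end{enumerate}
Without something that plays the role of the second and third ideas, your argument does not reach Theorem~\ref{thm:main} for arbitrary $\eta$.
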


In the split case, Jacquet--Rallis proved untwisted uniqueness
for arbitrary block sizes (see \cite[Theorem~1.1]{JR}).
Chen--Sun established balanced twisted uniqueness under a
goodness condition on the character
(see \cite[Theorems~B and~C]{CS}).
Lu treated adjacent block sizes under a corresponding character
hypothesis and proved unrestricted uniqueness when
$\min(p,q)=1$ (see \cite[Theorems~1.1 and~1.3]{Lu}).
The two-factor and one-factor twisting conventions are related by
\[
 \Hom_H\bigl(\pi,\alpha\circ\Nrd\boxt\beta\circ\Nrd\bigr)
 =\Hom_H\bigl(\pi\otimes(\beta^{-1}\circ\Nrd),
             (\alpha\beta^{-1})\circ\Nrd\boxt\one\bigr).
\]
Both sides are subspaces of the algebraic dual of the same
underlying vector space.

For general division algebras,
Anandavardhanan--Lu--Matringe--S\'echerre--Yang
proved untwisted uniqueness for all irreducible representations
when $|p-q|\leq1$, and for irreducible unitary representations
with arbitrary block sizes
(see \cite[Theorem~2.5 and Appendix~A]{ALMSY}).
Lu \cite{LuDA} formulated the general arbitrary-character
problem and obtained further results for certain classes of
representations.

Theorem~\ref{thm:main} establishes twisted uniqueness for all
irreducible smooth representations over $F$ and its quaternion
division algebra. In the Archimedean setting, Li--Xiao
proved the analogous result for irreducible Casselman--Wallach
representations over $\mathbb R$, $\mathbb C$, and $\mathbb H$
(see \cite[Theorem~1.1]{LX}).
Both results allow arbitrary block sizes and require no
goodness condition on the character.

On the global side, Xue--Zhang \cite{XZ} used a relative
trace formula to study twisted linear periods, establishing
a relative fundamental lemma and transfer of orbital integrals.

We will use a symmetry theorem
for distributions to give the proof of Theorem~\ref{thm:main}  

For an $F$-analytic manifold $X$ equipped with the action of $G$, set $\cS(X)=C_c^\infty(X)$ and
$\cD(X)=\cS(X)^*$. The action on test functions is
\begin{equation}\label{eq:convention}
\rho(l)f(x)=f(l^{-1}x),
\end{equation}
where $f\in S(X)$ and $l\in G$.
For an involution $s$, write $s_*T(f)=T(f\circ s)$.

\begin{theorem}\label{thm:group}
Every $T\in\cD(G)$ satisfying
\begin{equation}\label{eq:biequiv}
T\bigl(f(h_l^{-1}\,\cdot\,h_r)\bigr)
 =\eta(h_l)\eta(h_r)^{-1}T(f)
\end{equation}
for any $h_l,h_r\in H$ and $f\in S(G)$  is fixed by the anti-involution $g\mapsto g^{\dagger}$.
\end{theorem}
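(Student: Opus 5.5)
The plan is a Gelfand--Kazhdan type argument: Harish-Chandra descent reduces the statement to the tangent space of the symmetric pair, and a Fourier transform argument handles the nilpotent cone there. The first step is to check that the anti-involution $\sigma(g)=g^\dagger$ preserves the space of distributions satisfying \eqref{eq:biequiv}. Indeed $H^\dagger=H$, and $(h_l^{-1}gh_r)^\dagger=h_r^\dagger g^\dagger (h_l^\dagger)^{-1}$. Moreover $\Nrd(h^\dagger)=\Nrd(h)$ for $h$ in a matrix algebra over $\D$, since the canonical involution preserves the reduced norm, so $\eta(h^\dagger)=\eta(h)$. Hence if $T$ satisfies \eqref{eq:biequiv} then so does $\sigma_*T$, because the roles of $h_l$ and $h_r$ are swapped and the characters $\eta(h_l)\eta(h_r)^{-1}$ match up after inversion. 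Writing $T=T_++T_-$ with $\sigma_*T_\pm=\pm T_\pm$, it suffices to prove that every $T$ satisfying \eqref{eq:biequiv} with $\sigma_*T=-T$ vanishes.

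To do this I would pass to the symmetric space. Let $\epsilon=\diag(1_p,-1_q)$ and $\theta(g)=\epsilon g\epsilon$, so that $H=G^\theta$. Consider the map $g\mapsto g\theta(g)^{-1}$ onto $S=\{s:\theta(s)=s^{-1}\}$. Via the standard Aizenbud--Gourevitch/Jacquet--Rallis reduction (Frobenius descent along $G\to G/H$), a distribution on $G$ satisfying \eqref{eq:biequiv} corresponds to an $H$-conjugation-equivariant distribution on $S$ with respect to a character built from $\eta$, and $\sigma$ corresponds to an involution of $S$ preserving $H$-orbits of semisimple elements (essentially $s\mapsto s^\dagger$ twisted by $\epsilon$). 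I would then run Harish-Chandra descent by induction on $p+q$. At a non-central semisimple point $s\in S$, the descendant pair $(H_s,\mathfrak s_s)$ is a product of pairs of the same type of smaller size and of ``group-case'' pairs of the form $(\GL_m(E),\mathfrak{gl}_m(E))$, where $E$ is a field or division algebra over a finite extension of $F$. The character restricts to a character of the same shape, $\alpha\circ\Nrd$ or a product of such. For the same-type factors the induction hypothesis applies. For the group-case factors, the classical result that conjugation-invariant distributions are transpose-invariant applies; this persists with a $\Nrd$-character twist because the character is trivial on the conjugation action. Consequently any anti-invariant $T$ must be supported on the set $Z\cdot\mathcal N$, and after linearization (the Cayley transform or exponential near the centre) the problem is reduced to the corresponding statement on $\mathfrak s=\{X_{A,B}:A\in\Mat_{p\times q}(\D),\,B\in\Mat_{q\times p}(\D)\}$, where
\[
X_{A,B}=\begin{pmatrix}0&A\\ B&0\end{pmatrix},\qquad (h_1,h_2)\cdot(A,B)=(h_1Ah_2^{-1},\,h_2Bh_1^{-1}).
\]
On this space the character is $\eta$, and the involution is $(A,B)\mapsto(B^\dagger,A^\dagger)$. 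The statement to prove is that anti-invariant equivariant distributions supported on the nilpotent cone $\mathcal N$ vanish.

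The nilpotent step is where I expect the real difficulty, because the twist $\eta$ destroys the scaling symmetry used in the untwisted case. To handle it I would use two tools. The first is the Fourier transform on $\mathfrak s$ with respect to the trace form $\Trd(AB')+\Trd(BA')$, which commutes with the $H$-action and the involution. By the descent step, the Fourier transform of an anti-invariant distribution is again anti-invariant, hence also supported on $\mathcal N$. The second is the central element $(t\cdot1_p,1_q)\in H$, which acts by $(A,B)\mapsto(tA,t^{-1}B)$ and under which $T$ transforms by $\alpha(t)^{pd}$. I would combine this with the partial Fourier transforms in $A$ alone and in $B$ alone, which turn this action into honest homothety homogeneity.

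With these in hand, I would apply the Aizenbud--Gourevitch homogeneity lemma, in the form that a distribution on a quadratic space which is supported on the cone, has Fourier transform supported on the cone, and is homogeneous with a character $\chi$ must have $\chi$ among a restricted list of values. The aim is to show that the homogeneity characters forced by $\alpha$ and by the dimension $pqd^2$ are incompatible for every nilpotent $H$-orbit, using a stratification of $\mathcal N$ by $H$-orbits. On each orbit, the relevant data are the distributions along normal directions and the modular characters of the stabilizers, and the conclusion should hold for arbitrary $\alpha$. This orbit-by-orbit homogeneity computation for arbitrary $\alpha$ and $\beta$ is the main obstacle. If it fails for some orbits, I would fall back on a Jacquet--Rallis-style direct check: show that on the remaining orbits the involution fixes the orbit and acts on the space of equivariant distributions supported there by $+1$, which excludes anti-invariant ones.
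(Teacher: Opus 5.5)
Your outer reduction agrees with the paper's. You check that $\sigma_*$ preserves the space of distributions satisfying \eqref{eq:biequiv}, kill the $\sigma$-odd part by generalized Harish-Chandra descent, and treat the factors coming from eigenvalues other than $\pm1$ as conjugation problems on centralizer algebras $A_C$, which Theorem~\ref{thm:conjugation} handles. (When $\alpha\neq\beta$ you cannot literally pass to scalar distributions on $S$: the right character $\eta^{-1}$ produces an $H$-equivariant line bundle on $G/H$. The paper avoids this by running the descent for $(H\times H)\rtimes\langle s\rangle$ directly on $G$.)

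The genuine gap is the step you yourself call the main obstacle. You give no proof of the symmetry at the points where the descendant is the whole pair, and the problem you set up there is the wrong one. At $g_0=1$ the stabilizer is $\{(h,h)\}$ and $\chi(h,h)=\eta(h)\eta(h)^{-1}=1$. So the problem on $\Mat_{p\times q}(\D)\oplus\Mat_{q\times p}(\D)$ with $(X,Y)\mapsto(Y^\dagger,X^\dagger)$ is \emph{untwisted}. The twist survives only on the exchange summand (eigenvalue $-1$), which is always balanced. There the space is $\Mat_b(\D)\oplus\Mat_b(\D)$ with action $(u_1Xv_1^{-1},v_1Yu_1^{-1})$, the character is $\gamma(\Nrd u_1/\Nrd v_1)$ with $\gamma=\alpha\beta^{-1}$, and the involution is $(X,Y)\mapsto(X^\dagger,Y^\dagger)$ (Lemmas~\ref{lem:group-normal} and~\ref{lem:normal-involution}). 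So there are two distinct base problems, and neither is ``$\eta$-equivariance on $\mathfrak s$ with $(A,B)\mapsto(B^\dagger,A^\dagger)$''.

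For these base problems, an orbit-by-orbit homogeneity argument on the nilpotent cone is exactly where earlier work needed extra hypotheses: a goodness condition on the character, or restrictions on the block sizes. You supply no new input to get past that point. Your fallback would require showing, for every nilpotent orbit carrying a $\chi$-equivariant measure, that $\sigma$ stabilizes it and acts by $+1$, and you give no reason this holds for arbitrary $\gamma$.

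The paper's missing ideas are as follows.

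\emph{Twisted balanced factor.} A partial Fourier transform in $Y$ converts it into the pencil problem $(X,Z)\mapsto(gXh^{-1},gZh^{-1})$ with character $\kappa(\Nrd g/\Nrd h)$, where $\kappa=\gamma\nu^{db}$.
\begin{itemize}
\item On pencils with some invertible combination $sX+tZ$, the commuting $\GL_2(F)$-action reduces everything to conjugation symmetry on $\Mat_b(\D)$.
\item On singular pencils, the Kronecker form supplies stabilizer elements $(g_t,h_t)$ whose modular character forces $|\kappa|=\nu^{dj}$ with $0\le j\le b-1$.
\item The full Fourier transform replaces $\kappa$ by $\nu^{2db}\kappa^{-1}$, and the two constraints are incompatible (Theorem~\ref{thm:square}, Corollary~\ref{cor:balanced-recip}).
\end{itemize}

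\emph{Untwisted unbalanced factor.} This case is not covered by earlier results when $d=2$ and $|p-q|\ge2$. A partial Fourier transform and an incidence correspondence give the support bound $\dim(\im A+\im B)\le q$. A Radon transform to $\Gr^\ell_q(\D^{2q})$ then turns the Fourier transform into $U\mapsto U^\perp$, and the problem reduces to the same conjugation and singular-pencil arguments (Theorem~\ref{thm:rectangular}).

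Without substitutes for these two inputs, your proposal does not prove the theorem for arbitrary $\eta$.
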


Together with the Gelfand--Kazhdan criterion (see \cite[Theorem 2.21]{JLX}), Theorem \ref{thm:main} will follow from Theorem \ref{thm:group} immediately.



The proof of Theorem~\ref{thm:group} uses two linear symmetries.
For pairs $(A,B)\in\Mat_r(\D)^2$ of square matrices, with action
\begin{equation}\label{eq:intro-pair}
(g,h)(A,B)=(gAh^{-1},gBh^{-1}),
\end{equation}
we employ the partial Fourier transform to establish the componentwise adjoint symmetry
for every character $\kappa(\Nrd g/\Nrd h)$ (see Theorem~\ref{thm:square}). The key argument compares
the modular condition on orbits of singular pairs with its partial Fourier
transform. For rectangular pairs with action
\begin{equation}\label{eq:intro-recip}
(g,h)(X,Y)=(gXh^{-1},hYg^{-1}),
\end{equation}
we will use the Haar Radon transform to show that every invariant distribution
is fixed by $(X,Y)\mapsto(Y^{\dagger},X^{\dagger})$ (see Theorem~\ref{thm:rectangular}). A proper incidence correspondence
first gives the support bound in Proposition~\ref{prop:image-rank}.
The Haar Radon transform then reduces the symmetry to
orthogonal-complement duality on a Grassmannian. Finally applying the generalized Harish--Chandra descent, Theorem \ref{thm:group} follows by combining Theorem \ref{thm:square} and Theorem \ref{thm:rectangular}.

The paper is organized as follow. Section $2$ introduces the necessary notation and preliminary results. In Section $3$, we compute the modular characters of the stabilizers associated with singular elements and apply the Bernstein localization principle to eliminate the contributions from irrelevant elements. Section~\ref{sec:square} is devoted to establishing the symmetry property with respect to a nontrivial character in the balanced case $(p=q=r)$. Section~\ref{sec:padding} addresses the corresponding symmetry property in the rectangular case. In Section~\ref{sec:group-descent}, we analyze the normal representations and distinguish two cases: the unbalanced factor carries a trivial character, while a nontrivial character occurs only on a balanced factor. Combining the two linear results with the generalized Harish–Chandra descent, we establish Theorem \ref{thm:group}. Finally, we deduce Theorem \ref{thm:main}.

\subsection*{Acknowledgements} The second named author was partially supported by NSFC 12301031.
The first and third named authors would like to thank
their advisor, Jun Yu, for his guidance
and Jialiang Zou for his valuable help.
They were partially supported by
NSFC Grants 12431001 and 11971036.
The authors also acknowledge the assistance of
 ChatGPT6 Astra in the development and
refinement of some of the proofs.

\section{Notation and preliminaries}\label{sec:prelim}
All matrix maps act on right $\D$-vector spaces of columns. Their
kernels and images are right subspaces; row spaces are left
$\D$-subspaces. Matrix ranks and the dimensions of these subspaces
are taken over $\D$. Lie algebras and distribution spaces are regarded
as spaces over $F$. We use the same symbol for an algebraic $F$-group
and its group of $F$-points when no confusion is possible.
For a right $\D$-vector space $V$, write $\Gr_k(V)$ for its
Grassmannian of $k$-dimensional right subspaces. Write
$\Gr_k^\ell(\D^n)$ for the corresponding left Grassmannian.
These are projective $F$-varieties. For example, $\Gr_k(V)$ is the
closed locus of right $\D$-stable $F$-subspaces of dimension $d^2k$
in the ordinary $F$-Grassmannian. When using distributions, we mean
their $F$-rational points.

Set
\begin{equation}\label{eq:lambda}
 \lambda(g)=\nu(\Nrd g)^d,\qquad g\in\GL_r(\D),
\end{equation}
for every $r$. For $g\in\GL_m(\D)$ and $h\in\GL_n(\D)$,
\begin{equation}\label{eq:matrix-Jacobian}
 \left|\det_F\bigl(X\mapsto gXh^{-1};\Mat_{m\times n}(\D)\bigr)\right|_F
 =\lambda(g)^n\lambda(h)^{-m}.
\end{equation}
Indeed, after extension to a splitting field, the matrix space becomes
$\Mat_{dm\times dn}$ and the algebraic determinant is
$\Nrd(g)^{dn}\Nrd(h)^{-dm}$. This identity descends to $F$.
In particular, $\lambda(tI_r)=\nu(t)^{d^2r}$ for $t\in F^\times$.

We use the standard structure of central simple algebras and the
Skolem--Noether theorem; see \cite{Voight}. The reduced norm gives
$\GL_r(\D)^{\mathrm{ab}}\simeq F^\times$.
For completeness, in the quaternion division case its surjectivity
follows from the unramified quadratic presentation
$\D=E\oplus E j$, with $j^2$ a uniformizer: norms from
$\mathcal O_E^\times$ exhaust $\mathcal O_F^\times$, and
$\Nrd(j)$ has valuation one. If $x\in\D^\times$ has norm one,
choose a quadratic subfield containing $x$ (any quadratic subfield
when $x$ is central). Hilbert's theorem~90 gives
$x=b/\bar b$ in that subfield. An element $u\in\D^\times$
inducing its nontrivial automorphism then gives $x=[b,u]$.
Row elimination reduces the assertion for $\GL_r(\D)$ to this
assertion for $\D^\times$. The norm map is open, so the same
factorization holds for smooth characters.

Fix a nontrivial continuous unitary additive character
$\psi:F\to\C^\times$. On $\D$ choose self-dual Haar measure for
$(x,y)\mapsto\psi(\Trd(xy))$, and on matrix spaces use its
product measure. The trace pairing is nondegenerate, and
\begin{equation}\label{eq:reduced-identities}
 \Trd(XY)=\Trd(YX),\qquad
 \Trd(Z^\dagger)=\Trd(Z),\qquad
 \Nrd(g^\dagger)=\Nrd(g).
\end{equation}
Here the first identity also applies to rectangular $X,Y$ whose
products are square. These identities follow after splitting the
matrix algebras. Regarding $g$ as a linear operator, then its adjoint operator is given by $g^\dagger$. The map $g\mapsto g^\dagger$ preserves the chosen Haar measures.

An $l$-space is a locally compact, totally disconnected Hausdorff
space with a countable basis. All spaces of rational points below are
$l$-spaces. For an $l$-space $X$, write
\[
\cS(X)=C_c^\infty(X),\qquad \cD(X)=\cS(X)^*.
\]
More generally, if $\mathcal E\to X$ is a finite-rank locally constant
vector bundle, set
\[
\cS(X,\mathcal E)=\Gamma_c^\infty(X,\mathcal E),\qquad
\cD(X,\mathcal E)=\cS(X,\mathcal E^\vee)^*.
\]
If $a$ is an automorphism of an $l$-space, we use the pushforward
convention $(a_*T)(f)=T(f\circ a)$. For a group $K$ acting on $X$,
$\cD(X)^{K,\chi}$ denotes the distributions satisfying
$T(f(g^{-1}\,\cdot))=\chi(g)T(f)$ for every $g\in K$ and $f\in S(X)$.

Let $E$ be a finite-dimensional $F$-vector space with a nondegenerate
symmetric or alternating bilinear form $B$. 
Define
\begin{equation}\label{eq:Fourier-conventions}
 (\mathcal F_Bf)(x)=\int_E f(y)\psi(B(x,y))\,dy
\end{equation}
where the fixed Haar measure $dy$ is  self-dual
with respect to $\psi$,
such that $\mathcal F_B^2f=f(-\,\cdot)$ for symmetric $B$, and
$\mathcal F_B^2f=f$ for alternating $B$. On distributions we set
$(\mathcal F_BT)(f)=T(\mathcal F_Bf)$. The pairings and variables
for partial Fourier transforms are specified where they occur.

For a subspace $W\subset E$, write
$W^\perp=\{y\in E:B(w,y)=0\text{ for all }w\in W\}$.
A Haar measure $\mu_W$ determines the quotient measure on $E/W$;
let $\mu_{W^\perp}$ be its $\psi$-dual measure under the pairing
induced by $B$. Viewing these measures as distributions on $E$, we have
\begin{equation}\label{eq:Fourier-subspace}
 \mathcal F_B\mu_W=\mu_{W^\perp}.
\end{equation}
This correspondence is linear on the lines spanned by Haar measures,
and applying it twice returns the original measure.

More generally, let $B:E\times E'\to F$ be a nondegenerate bilinear
pairing between finite-dimensional $F$-vector spaces, and fix a Haar
measure $dy$ on $E'$. For
$f\in\cS(E')$ and $T\in\cD(E)$, set
\[
 (\mathcal F_Bf)(x)=\int_{E'}f(y)\psi(B(x,y))\,dy,
 \qquad (\mathcal F_BT)(f)=T(\mathcal F_Bf).
\]
Suppose that a group $K$ acts linearly on $E$ and $E'$ by $a_g$ and $b_g$,
respectively, with $B(a_gx,b_gy)=B(x,y)$ for every $g\in K$.
Then $\nu(\det_F b_g)=\nu(\det_F a_g)^{-1}$. The substitution $y=b_gz$ gives
\begin{equation}\label{eq:Fourier-covariance}
\begin{aligned}
 \mathcal F_B(f\circ b_g^{-1})(x)
 &=\nu(\det_F b_g)\int_{E'}f(z)\psi(B(a_g^{-1}x,z))\,dz\\
 &=\nu(\det_F a_g)^{-1}(\mathcal F_Bf)(a_g^{-1}x).
\end{aligned}
\end{equation}
Consequently, if $T$ is $\chi$-equivariant for the action $a_g$, then
\begin{equation}\label{eq:Fourier-equivariance}
 (\mathcal F_BT)(f\circ b_g^{-1})
 =\chi(g)\nu(\det_F a_g)^{-1}(\mathcal F_BT)(f).
\end{equation}
Thus $\mathcal F_BT$ is equivariant for the action $b_g$ with character
$\chi'(g)=\chi(g)\nu(\det_F a_g)^{-1}$.
The same calculation applies to a partial Fourier transform from
$V\times E$ to $V\times E'$, with the same action on the untransformed
factor $V$ on both sides. The determinant factor involves only the
action on $E$. The pairing and the actions on both sides will be
specified in each application.

We use the standard calculus of distributions on $l$-spaces as in
\cite[Section~1]{BZ}. In particular, for a closed subset $Z\subset X$
we identify distributions on $Z$ with distributions on $X$ supported
on $Z$.

We shall repeatedly use Bernstein's localization principle
(see \cite[Section~1.4]{Ber}). If an $l$-group $L$ acts continuously on $X$,
$b:X\to Y$ is a continuous $L$-invariant map to an $l$-space, and
$\chi$ is a character of $L$, then
\begin{equation}\label{eq:localization}
\cD(b^{-1}(y))^{L,\chi}=0\quad \mbox{for all  } y\in Y
\quad\Longrightarrow\quad
\cD(X)^{L,\chi}=0.
\end{equation}

\subsection{The modular convention and descent}
For a smooth algebraic group $S$, write
\begin{equation}\label{eq:delta}
\delta_S(s)=\bigl|\det_F(\Ad(s)|_{\Lie S})\bigr|_F.
\end{equation}
For a left Haar measure $dx$ on $S$, our right-translation convention is
\[
\int_S a(xs)\,dx=\delta_S(s)\int_S a(x)\,dx.
\]
Thus, for a parabolic subgroup, $\delta_S$ is the absolute determinant
on the Lie algebra of its unipotent radical.

Let us recall the following result for the Frobenius descent.
\begin{lemma}[
{\cite[Section~1.5]{Ber} or \cite[Theorem~2.5.7]{AG}}]
\label{lem:frob}
Let $L$ be the group of rational points of a smooth algebraic group,
assume that $L$ is unimodular, and let $S\subset L$ be the group of
rational points of a smooth algebraic subgroup. Let $\chi$ be a smooth
character of $L$, and let $W$ be an
$l$-space with a continuous $S$-action, and let
$\mathcal E\to W$ be a finite-rank locally constant $S$-equivariant
vector bundle. Then
\begin{equation}\label{eq:fiber-character}
\cD\bigl(L\times^S W,\,L\times^S\mathcal E\bigr)^{L,\chi}
 \simeq
\cD(W,\mathcal E)^{S,\chi|_S\delta_S}.
\end{equation}
In particular, for the trivial line bundle,
\[
\cD(L\times^S W)^{L,\chi}
 \simeq \cD(W)^{S,\chi|_S\delta_S}.
\]
Taking $W$ to be a point, a nonzero $\chi$-equivariant distribution
on $L/S$ exists only if
\begin{equation}\label{eq:orbit-condition}
\chi|_S=\delta_S^{-1},
\end{equation}
and when this condition holds its space is one-dimensional.
\end{lemma}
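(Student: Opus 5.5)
The proof of Lemma~\ref{lem:frob} is standard Frobenius reciprocity for distributions on a homogeneous bundle. I would reduce it to the statement for the trivial bundle over a product, and then track the modular characters carefully under the convention \eqref{eq:convention}.

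First I would use that $L\to L/S$ is an $S$-principal bundle of $l$-spaces which is locally trivial: the algebraic subgroup $S$ admits local analytic sections by the $F$-analytic implicit function theorem. From this I would obtain a surjection
\[
\cS(L\times W,\mathcal E)\to\cS(L\times^S W,L\times^S\mathcal E),
\]
given by integrating along the $S$-orbits $(x,w)\mapsto(xs^{-1},sw)$ against a left Haar measure $ds$ on $S$. Surjectivity is checked locally on $L/S$ using the sections, together with a bump function of total integral one on $S$. Its kernel is spanned by the differences $\phi-\phi^{s}$ twisted by the modulus factor, where $\phi^s$ denotes the $S$-translate. Dualizing identifies distributions on $L\times^S W$ with distributions on $L\times W$ that transform under $S$ by an explicit modular factor. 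The factor comes from the right-translation convention $\int_S a(xs)\,dx=\delta_S(s)\int_S a(x)\,dx$, and this is precisely where $\delta_S$ in \eqref{eq:fiber-character} enters.

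Next, the $L$-action on $L\times W$ is by left translation on the first factor only. An $(L,\chi)$-equivariant distribution on $L\times W$ is therefore, by the usual uniqueness of Haar measure argument extended to products, of the form $(\chi^{-1}\,dx)\otimes T_W$ for a unique $T_W\in\cD(W,\mathcal E)$. Here $\chi^{-1}dx$ means the measure $\chi(x)^{\pm1}dx$ on $L$, with the sign fixed by \eqref{eq:convention}. The proof of this uses the fact that $\cS(L\times W)=\cS(L)\otimes\cS(W)$, applied fiberwise for the locally constant bundle. Imposing the remaining $S$-invariance condition on $(\chi^{\pm1}dx)\otimes T_W$ then gives an $S$-equivariance condition on $T_W$. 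Unimodularity of $L$ is used here: right translation of $dx$ by $s$ produces no factor, so the only characters that survive are $\chi|_S$ and the $\delta_S$ coming from the integration over $S$. The result is exactly the character $\chi|_S\delta_S$. The isomorphism is $T\mapsto T_W$, and its inverse sends $T_W$ to the pushforward of $(\chi^{\pm1}dx)\otimes T_W$.

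For the last assertion, take $W$ to be a point. Then $\cD(\mathrm{pt})=\C$, and the condition $\cD(\mathrm{pt})^{S,\chi|_S\delta_S}\neq0$ forces $\chi|_S\delta_S=1$, which is \eqref{eq:orbit-condition}. When this holds the space is $\C$, so it is one-dimensional.

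The main obstacle is bookkeeping rather than any deep step. Signs and inverses of $\delta_S$ depend on the conventions $\rho(l)f=f(l^{-1}\cdot)$, the $T(f(g^{-1}\cdot))=\chi(g)T(f)$ equivariance, and left versus right Haar measure. I would fix these conventions by checking the case $W=\mathrm{pt}$, $\chi=1$ against the known criterion for an invariant measure on $L/S$, namely $\delta_L|_S=\delta_S$ with $\delta_L=1$, and then propagate them through the general computation.
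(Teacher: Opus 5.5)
Your proof is correct and follows the standard Frobenius-reciprocity argument. The paper gives no proof of its own and simply cites this argument (Bernstein \S1.5, Aizenbud--Gourevitch Theorem~2.5.7). You use the averaging surjection onto $\cS(L\times^S W)$, identify $(L,\chi)$-equivariant distributions on $L\times W$ with $(\chi(x)\,dx)\otimes\cD(W)$, and then impose the $S$-condition.

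One caution on the bookkeeping. Your proposed check ($W=\mathrm{pt}$, $\chi=1$) cannot fix any signs, since $\delta_S=1\iff\delta_S^{-1}=1$. The exponents are pinned down by the direct computation, which needs the normalization $\bar\phi(x,w)=\int_S\phi(xs,s^{-1}w)\,ds$ with $ds$ a left Haar measure. The literal reading $\int_S\phi(xs^{-1},sw)\,ds$ does not descend to $L\times^S W$ unless $S$ is unimodular. With the correct normalization, $\widetilde T(\phi):=T(\bar\phi)$ satisfies $\widetilde T\bigl(\phi(\,\cdot\,s_0,s_0^{-1}\,\cdot\,)\bigr)=\delta_S(s_0)\widetilde T(\phi)$. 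Combining this with $\int_L f(xs_0)\chi(x)\,dx=\chi(s_0)^{-1}\int_L f\chi\,dx$, which uses unimodularity of $L$, gives exactly the character $\chi|_S\delta_S$.
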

Furthermore, Aizenbud-Gourevitch have developed the following Generalized Harish--Chandra descent.
\begin{theorem}
{\cite[Theorem~3.1.1]{AG}}
\label{thm:HC-descent}
Let $K$ be a reductive algebraic group, not necessarily connected,
acting algebraically on a smooth affine variety $X$ over $F$.
Let $\chi$ be a smooth character of $K(F)$.
For $x\in X(F)$ with closed algebraic orbit $K\cdot x$, let
\[
 N_x=T_xX/T_x(K\cdot x),
\]
with the action induced by the stabilizer $K_x$.
If
\[
 \cD(N_x(F))^{K_x(F),\chi|_{K_x(F)}}=0
\]
for every such $x$, then
$\cD(X(F))^{K(F),\chi}=0$.
\end{theorem}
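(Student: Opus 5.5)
The strategy is to reduce from $X$ to neighbourhoods of closed orbits with Bernstein's localization principle \eqref{eq:localization}, and then linearize near each closed orbit using the Luna slice theorem. Let $\pi:X\to X/\!\!/K$ be the categorical quotient, and use the continuous map $\pi:X(F)\to (X/\!\!/K)(F)$ as the invariant map $b$ in \eqref{eq:localization}. Each fibre $\pi^{-1}(y)(F)$ that has rational points contains a unique closed orbit, and every orbit in the fibre contains that closed orbit in its closure. So it suffices to prove, for each $x$ with closed orbit $K\cdot x$, that
\[
\cD(\pi^{-1}(\pi(x))(F))^{K(F),\chi}=0 .
\]
Moreover, this vanishing holds as soon as the corresponding space vanishes on some $K(F)$-saturated open neighbourhood of $\pi^{-1}(\pi(x))(F)$.

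The second step is linearization. Since $K_x$ is reductive (Matsushima), the analytic Luna slice theorem applies. It gives a $K_x$-stable, $K_x$-saturated open neighbourhood $S$ of $0$ in $N_x(F)$ together with a $K(F)$-equivariant map
\[
K(F)\times^{K_x(F)} S\longrightarrow X(F).
\]
This map is an analytic isomorphism onto a saturated open neighbourhood of the fibre through $x$, and it maps the nilcone $\Gamma_x\subset N_x$ (the zero fibre of $N_x\to N_x/\!\!/K_x$) onto $\pi^{-1}(\pi(x))$. Both $K$ and $K_x$ are reductive, hence unimodular, so $\delta_{K_x}$ is trivial. Lemma~\ref{lem:frob} then identifies
\[
\cD\bigl(K(F)\times^{K_x(F)}S\bigr)^{K(F),\chi}\simeq \cD(S)^{K_x(F),\chi|_{K_x(F)}}.
\]
Restricted to distributions supported on the fibre, this identification gives
\[
\cD(\Gamma_x(F))^{K_x(F),\chi}.
\]
Since $\Gamma_x(F)$ is closed in $N_x(F)$, these are $K_x(F)$-equivariant distributions on $N_x(F)$, so they vanish by hypothesis.

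The third step handles the rest of the neighbourhood $S$. Here I apply localization once more, now over $N_x/\!\!/K_x$ restricted to the image of $S$. The fibre over $0$ was just treated. A fibre over a point $z\neq0$ contains a closed $K_x$-orbit through some $v\in S$. Under the slice isomorphism, $v$ corresponds to a point $x'\in X(F)$ with closed $K$-orbit and stabilizer $K_{x'}=(K_x)_v$. The normal space of $K_x\cdot v$ in $N_x$ is identified with $N_{x'}$, compatibly with the stabilizer action. Hence the hypothesis at $x'$, run through the same slice and Frobenius argument inside $N_x$, kills the fibres over $z\neq 0$. To make this non-circular, I would organize the whole argument as a noetherian induction on $X/\!\!/K$, or equivalently on $\dim K_x$ and the Luna stratum. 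Points $v\neq0$ in the slice have strictly smaller stabilizer, or lie in a strictly smaller stratum, than $x$.

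The main obstacle is this last compatibility together with the induction. One must check that the slice isomorphism carries closed orbits to closed orbits, and normal spaces in $N_x$ to normal spaces in $X$. Luna's étale map guarantees both, by its strong étaleness (it is excellent). One must also check that this holds at the level of $F$-points, where rational orbits inside a single geometric fibre can split into several pieces. For that I would invoke the analytic form of Luna's theorem from Aizenbud--Gourevitch and argue orbit by orbit on $F$-points.
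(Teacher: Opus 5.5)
The paper gives no proof of this statement; it only quotes \cite[Theorem~3.1.1]{AG}. Your ingredients are the right ones: analytic Luna slice, Matsushima plus unimodularity to remove the modular character in Lemma~\ref{lem:frob}, and Bernstein localization. However, Step~2 contains a false assertion. You claim that a single slice $K(F)\times^{K_x(F)}S$ is a saturated neighbourhood of the whole rational fibre $\pi^{-1}(\pi(x))(F)$ and carries $\Gamma_x(F)$ onto it.

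On $F$-points this fails. The only point of $\Gamma_x$ with closed $K_x$-orbit is $0$, so the image of $K(F)\times^{K_x(F)}\Gamma_x(F)$ meets $(K\cdot x)(F)$ only in $K(F)x$. But $(K\cdot x)(F)$ may consist of several $K(F)$-orbits, all in the same rational fibre. For example, let $K=\mathbb G_m$ act on $X=\mathbb G_m$ by $t\cdot y=t^2y$. The quotient is a point and the rational fibre is $F^\times$. At every $x$ one has $K_x=\mu_2$ and $N_x=0$, and the slice image is $xF^{\times2}$, a single square class.

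Your last paragraph notices that rational orbits split, but this is not something that can be ``checked''; the claim must be replaced by a covering argument. Cover the fibre by the slice neighbourhoods $U_{x_i}$ at \emph{all} its rational points $x_i$ with closed orbit. By Kempf's rational form of the Hilbert--Mumford criterion, the analytic closure of every $K(F)$-orbit in $X(F)$ contains such a point. Hence each orbit of the fibre lies in some open invariant $U_{x_i}$. The same fact justifies your tacit claim in Step~1 that every nonempty rational fibre contains a rational point with closed orbit. Your Frobenius--nilcone argument then applies on each $U_{x_i}\cap\pi^{-1}(\pi(x))(F)$ with $x_i$ in place of $x$, and the sheaf property of distributions on the fibre finishes the argument.

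Once this is done, Step~3 and the induction you single out as the main obstacle are superfluous. Step~1 already reduces everything to fibre-supported distributions, and the repaired Step~2 kills those using only the hypothesis at the closed-orbit points of that fibre. You need no transfer of closed orbits or normal spaces to nearby points, and no induction on strata.

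Even if you want vanishing on a whole slice neighbourhood, the hypothesis at $x$ alone suffices. Since $S$ is a saturated open subset of $N_x(F)$, every fibre of $N_x(F)\to(N_x/\!\!/K_x)(F)$ meeting $S$ lies inside $S$ and is closed in $N_x(F)$. By the hypothesis at $x$, such a fibre carries no nonzero $(K_x(F),\chi)$-equivariant distribution. Localization over $N_x/\!\!/K_x$ then gives $\cD(S)^{K_x(F),\chi}=0$, and Frobenius descent gives $\cD(U_x)^{K(F),\chi}=0$. By the Kempf fact above, the $U_x$ cover $X(F)$.

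As written, your Step~3 is only sketched, so a proof resting on it would be incomplete; but no such step is required.
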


We also use the standard compatibility of proper pushforward with
restriction; see \cite[\S2.3]{CHLR}. If $\pi:Y\to Z$ is a continuous
proper map of $l$-spaces, $U\subset Z$ is open, $Y_U=\pi^{-1}(U)$, and
$\pi_U=\pi|_{Y_U}$, then
\begin{equation}\label{eq:proper-restriction}
(\pi_*T)|_U=(\pi_U)_*(T|_{Y_U})
\qquad(T\in\cD(Y)).
\end{equation}
In particular, if $\pi_U$ is an isomorphism, then
\[
(\pi_U)_*(T|_{Y_U})=0
\iff
T|_{Y_U}=0.
\]

The following result is due to Rosenlicht.
\begin{theorem}[
{\cite[Theorem~2]{Rosenlicht} or \cite[Theorem~1.1(a)]{BGR}}]
\label{thm:rosenlicht}
Let an algebraic group $\mathbf L$ act on an irreducible $F$-variety $Y$.
There exist an $\mathbf L$-invariant dense open subset
$Y^\circ\subset Y$, an $F$-variety $Z$, and an
$\mathbf L$-invariant morphism
\[
b:Y^\circ\longrightarrow Z
\]
whose nonempty geometric fibers are single $\mathbf L$-orbits.
\end{theorem}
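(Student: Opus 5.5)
This is Rosenlicht's classical theorem, which the paper cites without proof. If I were to prove it, I would construct the quotient $Z$ birationally from the field of invariant rational functions $K=F(Y)^{\mathbf L}$, and then show that these invariants separate generic orbits.

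First, $K$ is a subfield of the finitely generated field $F(Y)$. Hence $K$ is itself finitely generated over $F$. I would choose an $F$-variety $Z$ with $F(Z)=K$; the inclusion $K\subset F(Y)$ then gives a dominant rational map $b:Y\dashrightarrow Z$. Let $U$ be the domain of definition of $b$. This set is open and, because the generators of $K$ are invariant, it is $\mathbf L$-stable. The map $b$ is $\mathbf L$-invariant on $U$, so every fiber of $b$ is a union of orbits.

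The key step is to show that $K$ separates generic orbits. Consider the orbit relation
\[
\Gamma=\overline{\{(y,ly): y\in Y,\ l\in\mathbf L\}}\subset Y\times Y .
\]
The fiber of $\Gamma$ over the generic point $\eta$ of the first factor is a subvariety $\Gamma_\eta\subset Y_{F(Y)}$. For general $y$, the specialization of $\Gamma_\eta$ at $y$ is the orbit closure $\overline{\mathbf L y}$.

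I would take the field of definition of $\Gamma_\eta$ over $F(Y)$, for instance the coefficients of its Chow form, or equivalently the point of the Hilbert scheme classifying it. Since $\Gamma_\eta$ depends only on the orbit of the generic point, this field is fixed by $\mathbf L$ and hence lies in $K$. Consequently, over a dense open subset, the orbit closure $\overline{\mathbf Ly}$ is determined by $b(y)$.

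Conversely, a general fiber of $b$ has dimension $\dim Y-\operatorname{trdeg}K$. This equals the generic orbit dimension, since the coordinates of the Chow point generate $K$. Therefore the general fiber of $b$ is a single orbit closure, up to lower-dimensional pieces.

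It remains to pass from orbit closures to orbits. The function $y\mapsto\dim\mathbf Ly$ is lower semicontinuous, so the locus where it attains its maximum is open and $\mathbf L$-stable. I would shrink $U$ to this locus, intersected with the dense open set produced above.

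On the resulting set $Y^\circ$, each geometric fiber is contained in a single orbit closure. Every orbit in that fiber has the maximal dimension. An orbit is open in its closure, and an irreducible closure contains only one orbit of full dimension. Hence each geometric fiber is a single orbit.

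To keep everything $\mathbf L$-invariant, I would replace each open set by the union of its $\mathbf L$-translates. This preserves openness and density.

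I expect the main obstacle to be the field-of-definition step: making rigorous that the family of generic orbit closures descends to $K$, over a field $F$ of characteristic zero that is not algebraically closed and with $\mathbf L$ possibly disconnected. I would first work over $\bar F$ with $\mathbf L^\circ$ and then descend. Galois invariance of the construction ensures that $Z$ and $b$ are defined over $F$. The finite group $\mathbf L/\mathbf L^\circ$ is then handled by a further quotient of the finite fibers, using invariants of a finite group.
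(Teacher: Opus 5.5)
The paper gives no proof of this statement; it imports it from Rosenlicht and BGR, so there is no internal argument to compare with. Your outline is the classical argument via the field of definition $k_0$ of the generic orbit closure $\Gamma_\eta$, and its core is sound. From $k_0\subseteq K=F(Z)$ you can spread $\Gamma_\eta$ out over a dense open subset of $Z$. Then, on a dense open subset of $Y$, the equality $b(y)=b(y')$ forces $\overline{\mathbf Ly}=\overline{\mathbf Ly'}$. Intersecting with the locus of maximal orbit dimension turns orbit closures into orbits. Because that property is orbitwise, saturating by $\mathbf L$ afterwards is harmless.

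Three points need repair.

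\emph{First}, the paragraph beginning ``Conversely'' is unsupported and should simply be deleted. You only showed that the Chow coordinates lie in $K$, not that they generate it; that needs a further argument, namely that an invariant $f$ restricts to the constant $f$ on $\Gamma_\eta$ and regularity of the function field of the $k_0$-model forces $f\in k_0$. Your final paragraph does not need it: it uses only that fibers lie in single orbit closures, which already follows from $k_0\subseteq K$.

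\emph{Second}, the claim $\Gamma_y=\overline{\mathbf Ly}$ for general $y$ needs its one-line justification. Over a dense open subset where $\Gamma\to Y$ is flat, every component of $\Gamma_y$ has dimension $\dim\Gamma-\dim Y$. The complement of the constructible image of $(l,y)\mapsto(y,ly)$ has strictly smaller generic fiber dimension, so it cannot contain a component of $\Gamma_y$.

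\emph{Third}, and most substantively, the descent sentence has no content as written. $Z$ is an arbitrary model of $K$, so there is no canonical construction whose Galois invariance yields a descent datum. Moreover, an irreducible $F$-variety need not be geometrically irreducible. That is exactly the case the paper needs, since it applies the theorem to strata produced by Noetherian induction.

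It is cleaner never to leave $F$. Define $K$ by $a^*f=p_2^*f$, where $a,p_2:\mathbf L\times Y\to Y$ are the action and the projection. Since $\Gamma$ is stable under $\mathbf L$ acting on the first factor, the base changes of $\Gamma_\eta$ along $a^*$ and $p_2^*$ coincide. Hence $a^*$ and $p_2^*$ agree on $k_0$ (compare Hilbert points or reduced Gr\"obner bases), that is, $k_0\subseteq K$. Geometric points then enter only in the final orbit statement.

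Running this with $\mathbf L$ itself also makes your finite quotient by $\mathbf L/\mathbf L^\circ$ unnecessary. Work on the locus of maximal $\mathbf L^\circ$-orbit dimension. There, any point of $\overline{\mathbf Ly}$, which is a finite union of closures of $\mathbf L^\circ$-orbits of translates of $y$, already lies in $\mathbf Ly$.
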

By Noetherian induction, Theorem~\ref{thm:rosenlicht} yields a finite
$\mathbf L$-invariant stratification
\begin{equation}\label{eq:rosenlicht-stratification}
Y=\bigsqcup_{i=1}^N Y_i,
\qquad
b_i:Y_i\longrightarrow Z_i,
\end{equation}
where each $Y_i$ is locally closed and each $b_i$ is
$\mathbf L$-invariant with nonempty geometric fibers consisting
of single $\mathbf L$-orbits.

\begin{proposition}
\label{prop:orbitwise-localization}
Let $L=\GL_m(\D)\times\GL_n(\D)$ act on $\Mat_{m\times n}(\D)^2$ by
\[
(g,h)(A,B)=(gAh^{-1},gBh^{-1}),
\]
and let $X\subset\Mat_{m\times n}(\D)^2$ be an $L$-invariant locally
closed subset. Let $\chi$ be a smooth character of $L$, and for
$x\in X$ let $L_x=\{\ell\in L:\ell\cdot x=x\}$ be its stabilizer.
If $\chi|_{L_x}\neq\delta_{L_x}^{-1}$ for every $x\in X$, then
$\cD(X)^{L,\chi}=0$.
\end{proposition}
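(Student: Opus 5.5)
We prove Proposition~\ref{prop:orbitwise-localization} by combining the Rosenlicht stratification~\eqref{eq:rosenlicht-stratification}, Bernstein localization~\eqref{eq:localization}, and Frobenius descent (Lemma~\ref{lem:frob}) on single orbits.

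\emph{Step 1: stratify.} Let $\mathbf L=\GL_m(\D)\times\GL_n(\D)$, viewed as an algebraic $F$-group (inner form of a product of general linear groups). Let $\overline X$ be the Zariski closure of $X$ in $\Mat_{m\times n}(\D)^2$, which is $\mathbf L$-stable. Apply~\eqref{eq:rosenlicht-stratification} to each irreducible component of the relevant strata. This gives a finite stratification of $\overline X$ by $\mathbf L$-invariant locally closed subvarieties $Y_i$ with invariant maps $b_i:Y_i\to Z_i$ whose nonempty geometric fibers are single orbits. Order the strata so that the union of the first $k$ is closed for each $k$, by Noetherian induction using open dense pieces.

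Intersecting with $X$, we get a filtration of $X$ by closed $L$-invariant subsets $X_{\le k}$ with successive differences $X_i=X\cap Y_i(F)$, each open in the next closed piece. For $U\subset C$ open with complement $C\setminus U$ closed, there is the exact sequence $0\to\cD(C\setminus U)\to\cD(C)\to\cD(U)$ of \cite[Section~1]{BZ}, and taking $(L,\chi)$-invariants preserves left exactness. Hence it suffices to show $\cD(X_i)^{L,\chi}=0$ for each $i$.

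\emph{Step 2: localize to fibers.} The map $b_i$ restricted to $X_i$ is continuous and $L$-invariant into the $l$-space $Z_i(F)$. By~\eqref{eq:localization}, it suffices to show $\cD(b_i^{-1}(z)\cap X_i)^{L,\chi}=0$ for each $z$. The fiber $b_i^{-1}(z)(F)$ is the set of $F$-points of a single geometric orbit. By standard results (finiteness of Galois cohomology $H^1(F,\mathbf L_x)$ in characteristic zero, cf.\ \cite[Section~1.5]{Ber} or \cite{AG}), it consists of finitely many $L$-orbits, each locally closed in the $l$-space topology. Filtering the fiber again by these finitely many orbits (an orbit of maximal dimension is open in the fiber, by the same argument as in Step 1), we reduce to showing $\cD(L\cdot x)^{L,\chi}=0$ for each $x\in X$.

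\emph{Step 3: a single orbit.} For $x\in X$, the orbit map identifies $L\cdot x$ with $L/L_x$ as $l$-spaces; this uses characteristic zero and the openness of orbit maps for algebraic actions over local fields. The stabilizer $L_x$ is the group of $F$-points of a smooth algebraic subgroup, again by characteristic zero. The group $L$ is unimodular since it is reductive. Lemma~\ref{lem:frob} with $W$ a point shows that a nonzero $\chi$-equivariant distribution on $L/L_x$ exists only if $\chi|_{L_x}=\delta_{L_x}^{-1}$, which is excluded by hypothesis. Hence every orbit contributes zero, and by the exact sequences and localization of Steps 1--2, $\cD(X)^{L,\chi}=0$.

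\emph{Main obstacle.} The delicate point is Step 2: passing from geometric fibers, which are single $\mathbf L$-orbits, to rational points, which may split into several $L$-orbits, and checking that these form a finite stratification by locally closed sets. I would first invoke the finiteness of $H^1(F,\mathbf L_x)$ together with the fact that each rational orbit is open in the rational points of the geometric orbit, by the submersivity of the orbit map. This yields finitely many open-and-closed orbits within each fiber, so the fiber-wise filtration is trivial.
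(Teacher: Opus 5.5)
Your proof is correct. It differs from the paper's at the one delicate point, namely how to pass from geometric orbits to rational ones.

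\textbf{Your route.} You allow the $F$-points of a geometric fiber $b_i^{-1}(z)$ to split into several $L$-orbits. You then control them using Borel--Serre finiteness of $H^1(F,\mathbf L_x)$ and the submersivity of orbit maps in characteristic zero, so each fiber is a disjoint union of open-and-closed $L$-orbits. This works for any algebraic action of a linear group in characteristic zero. Finiteness is in fact superfluous: once every rational orbit is open in the fiber, the fiber is already a topological disjoint union of orbits, and an equivariant distribution on it vanishes if it vanishes on each orbit.

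\textbf{The paper's route.} The paper uses the linearity of this specific action to show that the splitting never occurs. For two $F$-points $(A,B)$ and $(A',B')$, the equations $gA=A'h$, $gB=B'h$ cut out an $F$-linear subspace of $\Mat_m(\D)\oplus\Mat_n(\D)$. Geometric equivalence makes the polynomial $\Nrd g\,\Nrd h$ nonzero at some $\overline F$-point of this subspace. Since $F$ is infinite, it is then nonzero at an $F$-point. Hence the $F$-points of a geometric orbit form a single $L$-orbit. Equivalently, the stabilizer is the unit group of the associative $F$-algebra of Kronecker endomorphisms of $(A,B)$, and its $H^1$ vanishes. So every nonempty fiber of $b_i$ on $X\cap Y_i(F)$ is exactly one orbit $L/L_x$, and neither a further decomposition of fibers nor any Galois-cohomological input is needed.

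\textbf{Comparison.} The paper's argument is more elementary and matches the device reused in Lemma~\ref{lem:rational-similarity}. Yours is robust for arbitrary actions. Your preliminary passage to the Zariski closure of $X$ is harmless but unnecessary: one can stratify the whole ambient space and intersect with $X$, as the paper does.
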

\begin{proof}
For this matrix action, any two $F$-points in the same geometric
orbit already lie in the same $L$-orbit.
Indeed, let $(A,B)$ and $(A',B')$ be such points. The equations
\[
 gA=A'h,\qquad gB=B'h
\]
define an $F$-linear subspace of $\Mat_m(\D)\oplus\Mat_n(\D)$
in the unknowns $(g,h)$.
Geometric equivalence gives a solution over $\overline F$ with
$\Nrd g\Nrd h\ne0$. Hence the polynomial $(g,h)\mapsto\Nrd g\Nrd h$
does not vanish identically on this solution space.
Since $F$ is infinite, it is nonzero at some $F$-point of the
solution space. This gives $g\in\GL_m(\D)$ and $h\in\GL_n(\D)$
with $(A',B')=(gAh^{-1},gBh^{-1})$.

Apply \eqref{eq:rosenlicht-stratification} to the algebraic action of
$\mathbf L$ associated to $L$ on $\Mat_{m\times n}(\D)^2$, and intersect
the strata with $X$. By the geometric fiber property and the preceding
argument, for $x,x'\in X\cap Y_i(F)$ one has
\[
b_i(x')=b_i(x)
\quad\Longleftrightarrow\quad
x'\in L\cdot x.
\]
Since $X$ is $L$-invariant, every nonempty fiber of
$b_i|_{X\cap Y_i(F)}$ is a single orbit $L\cdot x\simeq L/L_x$.
Hence Lemma~\ref{lem:frob} gives
$\cD(L/L_x)^{L,\chi}=0$ whenever
$\chi|_{L_x}\neq\delta_{L_x}^{-1}$. The Bernstein localization on each
stratum yields $\cD(X\cap Y_i(F))^{L,\chi}=0$ for $1\le i\le N$, and
hence $\cD(X)^{L,\chi}=0$.
\end{proof}

\subsection{Conjugation symmetry}
\begin{lemma}\label{lem:rational-similarity}
Let $\mathcal A$ be a finite product of central simple algebras over
finite extensions of $F$. If $\jmath$ is an $F$-linear
anti-automorphism of $\mathcal A$ fixing its center pointwise, then
$x$ and $\jmath(x)$ are conjugate under $\mathcal A^\times$ for every
$x\in\mathcal A$.
\end{lemma}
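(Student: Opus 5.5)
First reduce to a single simple factor. Write $\mathcal A=\prod_i\mathcal A_i$ with $\mathcal A_i$ central simple over a finite extension $F_i/F$. Since $\jmath$ fixes the center $\prod_i F_i$ pointwise, it preserves each central idempotent $e_i$. Hence $\jmath$ maps each $\mathcal A_i=e_i\mathcal A$ to itself, and is an $F_i$-linear anti-automorphism there. Conjugacy in $\mathcal A^\times=\prod_i\mathcal A_i^\times$ is componentwise. So it suffices to treat a single central simple algebra $\mathcal A$ over a field $K$ (with $K=F_i$) and a $K$-linear anti-automorphism $\jmath$.

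Next compare $\jmath$ with the transpose. Write $\mathcal A=\Mat_n(\Delta)$ with $\Delta$ a central division algebra over $K$. Then $\mathcal A^{\mathrm{op}}\simeq\mathcal A$ forces the class of $\Delta$ to be $2$-torsion in the Brauer group. Over a local field this means $\Delta$ is $K$ or the quaternion division algebra, so $\Delta$ carries the canonical involution $x\mapsto\bar x$. Then $\tau(A)=A^\dagger$ is a $K$-linear anti-automorphism of $\mathcal A$. The composite $\jmath\circ\tau$ is a $K$-algebra automorphism, so by Skolem--Noether it is conjugation by some unit. Therefore $\jmath(x)=u\,x^\dagger u^{-1}$ for a unit $u$, and it suffices to show that $x^\dagger$ is conjugate to $x$ for every $x\in\mathcal A$.

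Now prove $x\sim x^\dagger$, which I expect to be the main step. Regard $\mathcal A$ as $\End_\Delta(V)$ with $V=\Delta^n$. Two elements $x,y$ are conjugate exactly when $V$, viewed as a module over $\Delta\otimes_K K[t]$ through $x$ and through $y$, gives isomorphic modules. Extend scalars to a splitting field $\overline K$. There $x$ and $x^\dagger$ become $dn\times dn$ matrices over $\overline K$, with $d=\deg\Delta$. Both are conjugate to the ordinary transpose $x^{t}$ up to a fixed inner twist, because $\dagger$ becomes transpose composed with conjugation by a fixed element. A matrix is conjugate to its transpose over any field, so $x$ and $x^\dagger$ are geometrically conjugate.

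Finally descend this geometric conjugacy to rational conjugacy, using the argument of the proof of Proposition~\ref{prop:orbitwise-localization}. The equation $gx=x^\dagger g$ defines a $K$-linear subspace $W\subset\mathcal A$. By the previous step it contains a point of $W\otimes\overline K$ with nonzero reduced norm. So the polynomial $\Nrd$ does not vanish identically on $W$, and since $K$ is infinite it is nonzero at some $K$-point $g\in W$. That $g$ is a unit of $\mathcal A$ and conjugates $x$ to $x^\dagger$. The only delicate point is the reduction to $\dagger$ in the second paragraph, where the local-field classification of $2$-torsion Brauer classes is used. Alternatively, one can avoid that classification: apply the descent argument directly to the equation $gx=\jmath(x)g$, since over $\overline K$ any anti-automorphism of a matrix algebra is $A\mapsto cA^{t}c^{-1}$, and $A^{t}\sim A$.
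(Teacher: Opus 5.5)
Your proposal is correct, and its core is exactly the paper's argument: $x$ and $\jmath(x)$ are conjugate over $\overline F$ because every anti-automorphism there is transpose up to an inner automorphism, and this conjugacy descends because a nonvanishing polynomial (reduced norm, or determinant of left multiplication) is not identically zero on the $F$-linear solution space of $gx=\jmath(x)g$ while $F$ is infinite. In fact your closing ``alternative'' is the paper's proof verbatim. The preliminary reduction of $\jmath$ to $\dagger$ via Skolem--Noether and the $2$-torsion of the local Brauer group is valid but unnecessary, and dropping it frees the argument from the local-field classification.
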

\begin{proof}
After extension to $\overline F$, every factor is a full matrix
algebra and $\jmath$ is an inner automorphism followed by transpose.
A matrix is similar to its transpose, so the $F$-linear space
\[
 V_x=\{u\in\mathcal A:u x=\jmath(x)u\}
\]
contains an invertible element over $\overline F$. The determinant
of left multiplication by $u$ on the $F$-space $\mathcal A$ is a
polynomial on $V_x$ that is therefore not identically zero. Since
$F$ is infinite, it is nonzero at an $F$-point. That point gives
the required conjugating element.
The same argument shows that two $F$-points in a common geometric
conjugacy orbit are already conjugate under $\mathcal A^\times$.
\end{proof}

\begin{theorem}\label{thm:conjugation}
Let
\[
 \mathcal A=\prod_i\Mat_{m_i}(D_i),
\]
where each $D_i$ is either a finite extension $E_i/F$ or a quaternion
division algebra over $E_i$. Let $\jmath$ be an $F$-linear
anti-automorphism of $\mathcal A$ fixing its center pointwise.
For every conjugation-invariant, $\jmath$-stable locally closed
subset $X\subset\mathcal A$, every
$\mathcal A^\times$-invariant distribution on $X$ is fixed by
$\jmath$. In particular, every conjugation-invariant distribution
on $\Mat_m(\D)$ is fixed by the anti-involution $A\mapsto A^\dagger$.
\end{theorem}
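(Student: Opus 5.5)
We prove Theorem~\ref{thm:conjugation} in the classical Gelfand--Kazhdan style, by Harish--Chandra descent on $\mathcal A$ and induction on $\dim_F\mathcal A$. The anti-automorphism $\jmath$ will be folded into the acting group. Set
\[
\widetilde K=\mathcal A^\times\rtimes\{1,\sigma\},\qquad \sigma\cdot x=\jmath(x),
\]
where $\sigma$ acts on $\mathcal A^\times$ by $g\mapsto\jmath(g)^{-1}$. Then $(\sigma g\sigma^{-1})\cdot x=\jmath(g)^{-1}\,x\,\jmath(g)$, so the action of $\widetilde K$ on $\mathcal A$ is well defined. Let $\chi$ be the character of $\widetilde K$ that is trivial on $\mathcal A^\times$ and $-1$ on $\sigma$. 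If $T$ is $\mathcal A^\times$-invariant, then $T-\jmath_*T$ lies in $\cD(X)^{\widetilde K,\chi}$, so it suffices to prove that $\cD(X)^{\widetilde K,\chi}=0$.

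First we reduce from $X$ to all of $\mathcal A$. Stratify by the characteristic-polynomial map $\mathcal A\to\prod_i(\text{coefficients over }E_i)$, which is $\widetilde K$-invariant because $\jmath$ fixes the center and preserves reduced characteristic polynomials. By Bernstein localization~\eqref{eq:localization} it suffices to treat distributions supported on a single fiber intersected with $X$. Each fiber is a finite union of conjugacy classes; by the last sentence of Lemma~\ref{lem:rational-similarity}, these are exactly the rational points of finitely many geometric orbits. Each class is $\jmath$-stable by Lemma~\ref{lem:rational-similarity}, and $X$ is a union of classes. So the question becomes one about $\widetilde K$-equivariant distributions on the nilpotent-cone-type fiber, handled by descent.

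We apply Theorem~\ref{thm:HC-descent} with $K=\widetilde K$ (a reductive group, not connected) acting on $\mathcal A$. Closed orbits are the semisimple classes $x=s$. For such $s$, the centralizer $Z(s)$ is again a product of central simple algebras $\prod_j\Mat_{m_j'}(D_j')$ over the extensions generated by the eigenvalues of $s$. The normal space $N_s$ identifies $\Ad$-equivariantly with $Z(s)$, since $\mathcal A=Z(s)\oplus[s,\mathcal A]$. By Lemma~\ref{lem:rational-similarity} there is $u$ with $\jmath(s)=usu^{-1}$. Hence the stabilizer of $s$ in $\widetilde K$ contains an element $u^{-1}\sigma$ that acts on $Z(s)$ as an anti-automorphism $\jmath'$ fixing the center of $Z(s)$.

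When $s$ is non-central, $\dim Z(s)<\dim\mathcal A$, and induction gives the vanishing on $N_s$. The remaining case is central $s$, and translating by $s$ reduces it to the nilpotent cone $\mathcal N$. There the standard argument applies. Use the Fourier transform $\mathcal F_B$ for the form $B(x,y)=\Trd(xy)$, which is $\widetilde K$-invariant because $\Trd\jmath=\Trd$. A distribution $T\in\cD(\mathcal N)^{\widetilde K,\chi}$ that survives localization at non-central semisimple points has $\mathcal F_BT$ also supported on $\mathcal N$ (up to center), and is also homogeneous under dilations. Then we use the finiteness of nilpotent orbits. Each orbit $O$ is $\jmath$-stable and carries an invariant measure. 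A stabilizer computation via Lemma~\ref{lem:frob}, with the element $u^{-1}\sigma$ fixing a point of $O$ and $\chi(u^{-1}\sigma)=-1$, shows that $\chi$-equivariant distributions on $O$ must vanish when restricted from $\mathcal N$. An alternative is to use the homogeneity bound: dilation $t$ acts on orbital measures by $|t|^{\dim O/2}$, while Fourier compatibility forces degree $\dim\mathcal A/2$, and these are incompatible on nonzero orbits. The zero orbit is trivial, since $\delta_0$ is $\jmath$-fixed.

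The main obstacle is this last nilpotent step. The cleanest route is to show directly that every nilpotent orbit $O$ contains a point $x$ with $\jmath(x)=x$ up to conjugation, with a conjugating element $u$ such that $u^{-1}\sigma$ stabilizes $x$. Then $\chi$ is nontrivial on a stabilizer element while $\delta$ is trivial on it, which kills the orbit by Lemma~\ref{lem:frob}. Combined with localization by orbits (finitely many, so stratify by dimension), this gives vanishing without any Fourier argument. The final assertion of the theorem is the special case $\mathcal A=\Mat_m(\D)$, $\jmath(A)=A^\dagger$.
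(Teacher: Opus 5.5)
Your setup $\widetilde K=\mathcal A^\times\rtimes\{1,\sigma\}$ only makes sense when $\jmath$ is an involution, and the theorem does not assume this. The automorphism $g\mapsto\jmath(g)^{-1}$ of $\mathcal A^\times$ squares to $\jmath^2$, which by Skolem--Noether is merely inner, say $\Ad(c)$. Likewise, $\sigma^2=1$ is incompatible with $\sigma$ acting on $\mathcal A$ by $\jmath$ unless $c$ is central. This case genuinely occurs: the paper applies the theorem to $\jmath_C(Z)=R^{-1}Z^\dagger R$ on $A_C$, whose square is conjugation by $R^{-1}R^\dagger$.

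The missing step is the paper's opening reduction. Let $\jmath_0$ be transpose or conjugate transpose on each factor. Then $\jmath\jmath_0$ is an automorphism fixing the center, hence inner, so $\jmath=\Ad(u)\circ\jmath_0$. This gives $\jmath_*T=(\jmath_0)_*T$ for conjugation-invariant $T$ and shows that $X$ is $\jmath_0$-stable, so you can build $\widetilde K$ from the genuine involution $\jmath_0$. The same point affects your descent step: the element $u^{-1}\sigma$ stabilizing $s$ induces an anti-automorphism $\jmath'$ of $Z(s)$ that is typically not an involution. So the inductive hypothesis must be the theorem itself, not a vanishing statement for a semidirect product with $\mathbb Z/2$.

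After this repair, the argument that actually proves the theorem is your final paragraph, and it is the paper's argument. You still need to justify ``$\delta$ is trivial on it.'' With $s_0=u^{-1}\sigma\in\widetilde K_x$, the value $\delta(s_0)$ is a positive real and $\delta(s_0)^2=\delta(s_0^2)$, where $s_0^2$ lies in the centralizer $Z(x)$ of $x$ in $\mathcal A^\times$. If $\delta$ is nontrivial on $Z(x)$, the orbit carries no invariant distribution at all, since $\chi$ is trivial there. Otherwise $\delta(s_0)=1\neq-1=\chi(s_0)$. The paper phrases the same fact as follows: the positive invariant measure on the orbit is carried by $\jmath_0$ to a positive multiple of itself, and $\jmath_0^2=1$ forces that multiple to be $1$.

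This works for \emph{every} orbit, not only nilpotent ones, so you can finish right after your characteristic-polynomial localization. Each fiber meets $X$ in finitely many $\jmath_0$-stable rational orbits. Filtering by the closed loci $\{x:\dim_F[x,\mathcal A]\le k\}$ makes each orbit open in its stratum, because orbit closures within a fiber only add lower-dimensional orbits. The Harish--Chandra descent, the induction on $\dim_F\mathcal A$, and the Fourier/homogeneity sketch (whose support and homogeneity claims are not justified as stated) are then unnecessary. The only remaining difference is that the paper uses Rosenlicht's stratification to get single-orbit fibers, whereas your characteristic-polynomial map plus dimension filtration is a more explicit way to reach the same orbitwise localization.
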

\begin{proof}
On each matrix factor choose transpose or conjugate transpose,
as appropriate, and denote the product involution by $\jmath_0$.
The Skolem--Noether theorem shows that $\jmath\jmath_0$ is inner. It suffices
therefore to treat $\jmath_0$; the set $X$ is also
$\jmath_0$-stable.

Apply the Rosenlicht stratification to the conjugation on $\mathcal A$.
Intersect each invariant open stratum with its $\jmath_0$-image
and continue by Noetherian induction. This gives
$\jmath_0$-stable strata and invariant quotient maps whose fibers
on $F$-points are single conjugacy orbits, by
Lemma~\ref{lem:rational-similarity}. The quotient maps are
$\jmath_0$-invariant because $\jmath_0$ preserves each geometric
orbit.

On a single orbit, Lemma~\ref{lem:frob} shows that the space of
invariant distributions has dimension at most one. If nonzero,
it is spanned by a positive invariant measure. Its pushforward
under $\jmath_0$ is a positive multiple of itself; since
$\jmath_0^2=1$, that multiple is one. Thus there is no invariant,
$\jmath_0$-odd distribution on any fiber. The Bernstein localization,
applied to the group generated by conjugation and $\jmath_0$
with sign character on $\jmath_0$, gives the assertion on every
stratum and then on $X$. This finishes the proof.
\end{proof}

\begin{corollary}\cite[Theorem 3.1]{Ragh}\label{cor:contragredient}
For every irreducible smooth representation $\pi$ of $\GL_r(\D)$,
\[
 \pi^\vee\simeq\pi\circ\theta,\qquad
 \theta(g)=(g^\dagger)^{-1}.
\]
\end{corollary}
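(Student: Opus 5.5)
The plan is the classical Gelfand--Kazhdan argument via characters, with Theorem~\ref{thm:conjugation} supplying the key input. Let $\pi$ be irreducible smooth and admissible; for $\GL_r(\D)$ this is Jacquet's theorem. Its distribution character is the functional
\[
\Theta_\pi(f)=\operatorname{tr}\pi(f),\qquad f\in\cS(\GL_r(\D)).
\]
It is a conjugation-invariant distribution on $\GL_r(\D)$. We compare $\pi^\vee$ with $\pi\circ\theta$ through their characters and then invoke linear independence of characters of irreducible representations. That independence holds because, by Jacobson density applied to the Hecke algebra of each compact open subgroup, the irreducible modules are separated by traces.

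The first step is to compute the characters. We have $\Theta_{\pi^\vee}(f)=\Theta_\pi(\check f)$ with $\check f(g)=f(g^{-1})$, and $\Theta_{\pi\circ\theta}(f)=\Theta_\pi(f\circ\theta^{-1})$ up to the Haar measure factor, which is trivial because $\theta$ preserves Haar measure. Since $\theta^{-1}(g)=(g^{-1})^\dagger$, these two functionals differ exactly by pushforward under the anti-involution $g\mapsto g^\dagger$. It therefore suffices to prove
\[
\Theta_\pi(f\circ\dagger)=\Theta_\pi(f),
\]
that is, that $\Theta_\pi$ is $\dagger$-invariant.

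The second step, which is the main point, is to apply Theorem~\ref{thm:conjugation} with $\mathcal A=\Mat_r(\D)$, $\jmath=\dagger$, and $X=\GL_r(\D)$. The anti-involution $\dagger$ fixes the center $F$ pointwise, and $\GL_r(\D)$ is an open subset that is conjugation-invariant and $\dagger$-stable. Theorem~\ref{thm:conjugation} then gives that every conjugation-invariant distribution on $\GL_r(\D)$, and $\Theta_\pi$ in particular, is fixed by $\dagger$. Consequently $\Theta_{\pi^\vee}=\Theta_{\pi\circ\theta}$.

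The last step is to conclude. Both $\pi^\vee$ and $\pi\circ\theta$ are irreducible admissible, so linear independence of characters forces $\pi^\vee\simeq\pi\circ\theta$. The only potential obstacle is the routine bookkeeping in the first step: checking that the Haar measure is preserved by $g\mapsto g^\dagger$ and by inversion, which holds by unimodularity and the remark in Section~\ref{sec:prelim}. No harder input is needed.
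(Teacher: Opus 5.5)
Your argument is correct and is essentially the paper's. The paper observes that $g\mapsto g^\dagger$ preserves every conjugacy class (Lemma~\ref{lem:rational-similarity}) and then invokes \cite[Theorem~8.2.1]{AG}, which packages the same Gelfand--Kazhdan character argument; you write it out explicitly, using Theorem~\ref{thm:conjugation} on the open, conjugation-invariant, $\dagger$-stable set $X=\GL_r(\D)$, and your bookkeeping $\Theta_{\pi^\vee}(f)=\Theta_\pi(\check f)$, $\Theta_{\pi\circ\theta}(f)=\Theta_\pi(\check f\circ\dagger)$ is right.
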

\begin{proof}
The anti-involution $g\mapsto g^\dagger$ normalizes conjugation, squares to the identity, and
preserves every conjugacy class by Lemma~\ref{lem:rational-similarity}.
The contragredient identity therefore follows from
\cite[Theorem~8.2.1]{AG}.
\end{proof}

\section{The modular obstruction for singular pairs}\label{sec:pairs}
Set
\[
\cM_{m,n}=\Mat_{m\times n}(\D)\oplus\Mat_{m\times n}(\D),\qquad
L_{m,n}=\GL_m(\D)\times\GL_n(\D),
\]
with action \eqref{eq:intro-pair}. For $(A,B)\in\cM_{m,n}$, define
its \emph{normal rank}, with $t$ central, by
\[
\operatorname{nrank}(A,B)=\rank_{\D(t)}(A+tB).
\]
Here $\D(t)=\D\otimes_F F(t)$ is a division algebra, as is seen
by embedding it in the division ring $\D((t))$. A full-rank
square pencil has $\Nrd(A+tB)\ne0$ in $F[t]$, so some central
scalar specialization is invertible.
We call $(A,B)$ \emph{regular} if
$\operatorname{nrank}(A,B)=\min(m,n)$, and \emph{singular} otherwise.
If $m=n$, the pair is singular if and only if every scalar combination
$sA+tB$, with $s,t\in F$, is a singular matrix.

\subsection{Kronecker canonical form}
For $r,s\geq0$, write $\mathbf I_r$ for the $r\times r$ identity matrix
and $\mathbf 0_{r\times s}$ for the $r\times s$ zero matrix. For $a\geq0$,
set
\[
A_a=\begin{bmatrix}\mathbf I_a&\mathbf 0_{a\times1}\end{bmatrix},
\qquad
B_a=\begin{bmatrix}\mathbf 0_{a\times1}&\mathbf I_a\end{bmatrix},
\qquad
P_a=(A_a,B_a),
\]
and for $b\geq0$ set
\[
Q_b=
\left(
\begin{bmatrix}\mathbf I_b\\ \mathbf 0_{1\times b}\end{bmatrix},
\begin{bmatrix}\mathbf 0_{1\times b}\\ \mathbf I_b\end{bmatrix}
\right)
=(A_b^{\dagger},B_b^{\dagger}).
\]

We have the following Kronecker decomposition (see \cite{Gantmacher}).
\begin{theorem}
\label{thm:kronecker}
Let $(A,B)\in\cM_{m,n}$ and put $\rho=\operatorname{nrank}(A,B)$.
Then $(A,B)$ is $L_{m,n}$-equivalent to a simultaneous block-diagonal sum
\begin{equation}\label{eq:K-decomp}
(A,B)\sim
\bigoplus_{i=1}^{n-\rho}P_{a_i}
\oplus
\bigoplus_{j=1}^{m-\rho}Q_{b_j}
\oplus R,
\end{equation}
where $R\in\Mat_s(\D)^2$ is regular, and
\[
\rho=\sum_{i=1}^{n-\rho}a_i+\sum_{j=1}^{m-\rho}b_j+s.
\]
The multisets $\{a_i\}$ and $\{b_j\}$, and the equivalence class of
$R$, are uniquely determined by $(A,B)$.
\end{theorem}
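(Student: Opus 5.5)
Theorem~\ref{thm:kronecker} is classical over a field (Gantmacher); the task is to transport the argument to the division ring $\D$. The plan is to run Kronecker's reduction directly over $\D$, using only right/left linear algebra and the fact that $\D(t)=\D\otimes_F F(t)$ and $\D[t]$ behave like a field and a Euclidean domain for rank and kernel purposes. The argument has two halves: existence by induction on $m+n$, then uniqueness through rank invariants.

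\emph{Existence.} If $\rho<n$, the pencil $A+tB$ has a nonzero right kernel over $\D(t)$. After clearing denominators we obtain a polynomial vector $x(t)=\sum_{k=0}^{a}x_kt^k\in\D[t]^n$ with central $t$, $(A+tB)x(t)=0$, of minimal degree $a$. Coefficientwise this gives
\[
Ax_0=0,\qquad Ax_k+Bx_{k-1}=0\ (1\le k\le a),\qquad Bx_a=0.
\]
As in Gantmacher's argument, minimality forces the vectors $x_0,\dots,x_a$ to be right $\D$-linearly independent, and also the vectors $Bx_0,\dots,Bx_{a-1}$. The point is that a dependence relation would produce a solution of smaller degree. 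This step uses only that $t$ is central, so the polynomial manipulations in $\D[t]$ go through verbatim.

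In the bases extended from these vectors, $(A,B)$ becomes block upper triangular, with $P_a$ (up to sign normalisation) in the corner and a remaining pencil $(A',B')$. The pencil $(A',B')$ has no kernel vector of degree $<a$. The main obstacle is to kill the off-diagonal block $(C,D)$, that is, to solve a Sylvester-type system
\[
A_aY-ZA'=C,\qquad B_aY-ZB'=D
\]
over $\D$. This system is $F$-linear in $(Y,Z)$, so solvability can be checked after extension to a splitting field. There, the classical result applies: solvability follows from the minimality of $a$. By linear algebra, a linear system with $F$-coefficients that has a solution over $\overline F$ also has one over $F$, and the $F$-points of the solution space are exactly the $\D$-solutions.

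Then induct on $(A',B')$. Left kernels are handled by the dual argument applied to $(A^\dagger,B^\dagger)$, which produces the blocks $Q_b$. When neither kernel exists, the remaining pencil is regular square of size $s$. The normal rank of $P_a$ is $a$, that of $Q_b$ is $b$, and that of $R$ is $s$. Normal rank is additive on direct sums, and the numbers of $P$- and $Q$-blocks equal the right and left kernel dimensions $n-\rho$ and $m-\rho$. Together these give $\rho=\sum a_i+\sum b_j+s$.

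\emph{Uniqueness.} First, $L_{m,n}$-equivalence over $\D$ implies equivalence over $\overline F$. Over $\overline F$, each $P_a$ becomes $d$ copies of the split $P_{a}$ of size $a\times(a+1)$; more precisely, $\D\otimes\overline F=\Mat_d$ and the block becomes $P_a\otimes I_d$, which is equivalent to $P_a^{\oplus d}$. The same holds for $Q_b$. So the minimal indices over $\overline F$ are those over $\D$ with multiplicity multiplied by $d$, and classical uniqueness recovers the multisets $\{a_i\}$ and $\{b_j\}$. Alternatively, they are read off intrinsically from the dimensions of the kernel of $\D[t]$-degree filtrations.

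It remains to show that the class of $R$ is unique. By the classical theorem, the regular parts of two decompositions are geometrically equivalent. Rational equivalence then follows exactly as in the first paragraph of the proof of Proposition~\ref{prop:orbitwise-localization}: the solution space of $gR=R'h$ is $F$-linear, and $\Nrd g\,\Nrd h$ is a polynomial on it that does not vanish identically, hence is nonzero at some $F$-point.
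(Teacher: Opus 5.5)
Your proof is correct, but it takes a genuinely different route from the paper.

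\textbf{The paper's route.} The paper never redoes Kronecker's reduction over $\D$. It forgets the $\D$-structure and regards the pencil as an $F$-Kronecker module $M$ on which $\D^{\mathrm{op}}$ acts by endomorphisms. It then uses separability of $\D$ to lift the central (isotypic) idempotents of $\End_{\mathrm{Kr},F}(M)/\operatorname{rad}$ into the centralizer of $\D^{\mathrm{op}}$. The singular indecomposables $P_a(F)$ and $Q_b(F)$ have endomorphism ring $F$. So a $\D$-structure on an isotypic piece $P_a(F)^{\oplus d^2v}$ is just a $\D$-structure on its multiplicity space, which yields $P_a(\D)^{\oplus v}$. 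Uniqueness comes from the $F$-decomposition, together with Krull--Schmidt cancellation among $\D$-pencils for the regular part.

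\textbf{Your route.} You run Gantmacher's constructive reduction inside $\D[t]$. Using right scalars and the centrality of $t$, the independence of $x_0,\dots,x_a$ and of $Bx_0,\dots,Bx_{a-1}$ does go through. You black-box only the elimination of the off-diagonal block, by base-changing one $F$-linear Sylvester system to a splitting field. You obtain the $Q_b$ through the adjoint. For uniqueness you use the reduced base change $\D\otimes_F\overline F\simeq\Mat_d(\overline F)$, under which $P_a(\D)$ becomes $P_a^{\oplus d}$. You then descend with the $\Nrd$-nonvanishing argument from Proposition~\ref{prop:orbitwise-localization}.

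\textbf{Comparison.} Your route is more elementary and constructive, and it identifies the minimal indices intrinsically as minimal degrees of polynomial kernel vectors over $\D[t]$. The paper's route is shorter, treats the $P$- and $Q$-blocks symmetrically, and avoids any rational-point argument.

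\textbf{Two steps to justify when you write it out.} You currently only assert these.
\begin{itemize}
\item \emph{No low-degree kernel vectors in the complement.} The map $u(t)\mapsto(A_a+tB_a)u(t)$ sends polynomial vectors of degree $\le a-1$ in $\D[t]^{a+1}$ to those of degree $\le a$ in $\D[t]^{a}$. It is injective, since $P_a$ has no nonzero polynomial kernel vector of degree $<a$. The two spaces have the same dimension, so it is bijective. Hence a kernel vector of $(A',B')$ of degree $<a$ would lift, through the block-triangular form, to one of $(A,B)$, contradicting minimality.
\item \emph{The elimination over the splitting field.} There the corner is $P_a\otimes I_d\sim P_a^{\oplus d}$. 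The Sylvester system decouples, so the classical elimination lemma is applied copy by copy. This uses that $(A'_E,B'_E)$ still has no kernel vectors of degree $<a$. That holds because the condition is the injectivity of an $F$-linear map, which survives base change.
\end{itemize}
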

\begin{proof}
For $\D=F$, this is the usual Kronecker theorem
(see \cite[Chapter~XII, \S\S3--5]{Gantmacher}). We reduce the division-algebra
case to that theorem, retaining the $\D$-module structure.
Regard the pencil as a Kronecker representation $M$ over $F$,
and put $B=\End_{\mathrm{Kr},F}(M)$ and $J=\operatorname{rad}B$.
Right multiplication embeds $\D^{\mathrm{op}}$ into $B$.
The isotypic components of the $F$-Kronecker decomposition give
orthogonal central idempotents in $B/J$.

Since $\D$ is separable over $F$, the map
\[
 C_B(\D^{\mathrm{op}})\longrightarrow
 C_{B/J}(\D^{\mathrm{op}})
\]
is surjective. Indeed, a separability idempotent gives an averaging
projection onto the centralizer, preserving $J$ and inducing the
corresponding projection modulo $J$. Its kernel is nilpotent.
Consequently the indicated central idempotents lift to a complete
orthogonal family in $C_B(\D^{\mathrm{op}})$. Their images split
$M$ into $\D$-stable summands whose underlying $F$-representations
are its original isotypic components.

For a singular indecomposable $P_a(F)$ one has
$\End_{\mathrm{Kr},F}(P_a(F))=F$. A $\D$-module structure on
$P_a(F)^{\oplus u}$ therefore amounts to a right $\D$-module
structure on the multiplicity space $F^u$. This space is free of
some rank $v$, with $u=d^2v$, and the resulting summand is
$P_a(\D)^{\oplus v}$. The same argument applies to $Q_b$.
The remaining summand is regular as an $F$-pencil, hence also
as a $\D$-pencil; its two $\D$-dimensions are equal.
This proves existence. The uniqueness of the underlying
$F$-decomposition gives the multiplicities of the singular blocks,
and Krull--Schmidt cancellation gives the regular summand.
Finally, $P_a$ has normal rank $a$, $Q_b$ has normal rank $b$,
and a regular square summand of size $s$ has rank $s$.
Comparing the two vertex dimensions gives the stated counts.
\end{proof}

For a fixed pair, write the three terms in \eqref{eq:K-decomp} as
$P\oplus Q\oplus R$, and put $\sigma=\sum_i a_i$.

\subsection{Morphisms between the blocks}
For matrix pairs $X=(A,B)\in\Mat_{m\times n}(\D)^2$ and
$X'=(A',B')\in\Mat_{m'\times n'}(\D)^2$, define
\[
\Hom_{\mathrm{Kr}}(X,X')
=
\left\{(S,T)\in
\Hom_{\D}(\D^n,\D^{n'})\times\Hom_{\D}(\D^m,\D^{m'})
:
A'S=TA,\ B'S=TB
\right\}.
\]
For a fixed pair $(A,B)$, let $S_{A,B}\subset L_{m,n}$ be its stabilizer.
Differentiating $gAh^{-1}=A$ and $gBh^{-1}=B$ at the identity gives
\[
\mathfrak s_{A,B}
=\left\{(X,Y)\in\Mat_m(\D)\oplus\Mat_n(\D):
XA=AY,\ XB=BY\right\}.
\]
Thus, via $(X,Y)\mapsto(Y,X)$,
\[
\mathfrak s_{A,B}\simeq
\Hom_{\mathrm{Kr}}\bigl((A,B),(A,B)\bigr).
\]
If $(A,B)=P\oplus(Q\oplus R)$, then
\[
\mathfrak s_{A,B}\simeq
\End_{\mathrm{Kr}}(P)
\oplus\End_{\mathrm{Kr}}(Q\oplus R)
\oplus\Hom_{\mathrm{Kr}}(Q\oplus R,P)
\oplus\Hom_{\mathrm{Kr}}(P,Q\oplus R),
\]
where $\End_{\mathrm{Kr}}(X)=\Hom_{\mathrm{Kr}}(X,X)$.
We record only the Hom spaces involving the $P$-blocks, since these are
the ones that contribute nontrivially to the adjoint determinant below.

\begin{proposition}\label{prop:K-Hom}
For $a,b\geq0$, let $P_a,Q_b$ be the standard blocks above, and let
$R\in\Mat_s(\D)^2$ be regular. Then
\begin{equation}\label{eq:K-Hom}
\begin{gathered}
\Hom_{\mathrm{Kr}}(P_a,Q_b)=0,
\qquad
\dim_F\Hom_{\mathrm{Kr}}(Q_b,P_a)=d^2(a+b),\\
\Hom_{\mathrm{Kr}}(P_a,R)=0,
\qquad
\dim_F\Hom_{\mathrm{Kr}}(R,P_a)=d^2s.
\end{gathered}
\end{equation}
\end{proposition}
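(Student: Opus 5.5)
We compute each Hom space directly from the defining equations $A'S=TA$, $B'S=TB$. We use two tools: the polynomial kernel vector of $P_a$ for the vanishing statements, and explicit coordinates for the dimension counts. Throughout, $t$ is a central indeterminate and we work over the division algebra $\D(t)$ introduced above.

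\emph{Vanishing of $\Hom_{\mathrm{Kr}}(P_a,Q_b)$ and $\Hom_{\mathrm{Kr}}(P_a,R)$.} Let $v(t)=(1,-t,t^2,\dots,(-t)^a)^{\mathsf T}\in\D[t]^{a+1}$. One checks $(A_a+tB_a)v(t)=0$. Let $(S,T)\in\Hom_{\mathrm{Kr}}(P_a,X')$ with $X'=(A',B')$. Then
\[
(A'+tB')\,S\,v(t)=T(A_a+tB_a)v(t)=0 .
\]
If $X'=R$ is regular, then $A'+tB'$ is an invertible square matrix over $\D(t)$, since $\Nrd(A'+tB')\neq0$. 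If $X'=Q_b$, then $A'+tB'$ has full column rank $b$ over $\D(t)$, because its top $b\times b$ block is lower-triangular unipotent. In both cases $S\,v(t)=0$. Comparing coefficients of $t^k$ (the entries of $S$ lie in $\D$ and $t$ is central) kills every column of $S$, so $S=0$. Then $TA_a=0$, and since $A_a$ is surjective, $T=0$.

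\emph{$\Hom_{\mathrm{Kr}}(Q_b,P_a)$.} Write $T=(t_1,\dots,t_{b+1})$ with $t_j\in\D^a$ and $S=(s_1,\dots,s_b)$ with $s_j\in\D^{a+1}$. We have $TA_b^\dagger=(t_1,\dots,t_b)$ and $TB_b^\dagger=(t_2,\dots,t_{b+1})$. The first equation then says that the top $a$ entries of $s_j$ equal $t_j$, and the second says that the bottom $a$ entries of $s_j$ equal $t_{j+1}$. Thus $S$ is determined by $T$, and the only constraint is $t_j(i+1)=t_{j+1}(i)$ for $1\le i\le a-1$. So $T$ is a Hankel array whose entries depend only on $i+j$, with $i+j\in\{2,\dots,a+b+1\}$. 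This gives $a+b$ free entries in $\D$, so $\dim_F=d^2(a+b)$.

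\emph{$\Hom_{\mathrm{Kr}}(R,P_a)$.} This is the step we expect to need the most care: we want to normalize $R=(R_A,R_B)$ to $(I_s,C)$. By regularity, some $F$-combination $\alpha R_A+\beta R_B$ is invertible. We apply an element $\gamma\in\GL_2(F)$ to both pencils simultaneously, replacing $(A,B)$ by $(\alpha A+\beta B,\gamma A+\delta B)$. This preserves the Hom space, since the defining equations are $F$-linear combinations of each other. It sends $P_a$ to a pencil that is again singular of the same type $P_a$ by uniqueness in Theorem~\ref{thm:kronecker}, so the Hom space is isomorphic to that for the original $P_a$. After this change and an equivalence of $R$, we may assume $R=(I_s,C)$.

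Now the equations read $A_aS=T$ and $B_aS=TC$. So $T$ consists of the first $a$ rows of $S$, and for $1\le i\le a$ row $i+1$ of $S$ equals row $i$ of $S$ times $C$. Hence $S$ is determined by its first row, which is arbitrary in $\D^s$, and $T$ is then determined. Therefore $\dim_F=d^2s$.
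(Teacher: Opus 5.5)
Your strategy works, but one identity is false as written. For $v(t)=(1,-t,t^2,\dots,(-t)^a)^{\mathsf T}$, the $i$-th entry of $(A_a+tB_a)v(t)$ is $(-t)^{i-1}+t(-t)^{i}=(-t)^{i-1}(1-t^2)\neq0$. Already for $a=1$ one gets $[1\ \ t]\,(1,-t)^{\mathsf T}=1-t^2$. Your vector actually spans the kernel of $tA_a+B_a$. There are two easy repairs:
\begin{itemize}
\item take $v(t)=(t^a,-t^{a-1},\dots,(-1)^a)^{\mathsf T}$;
\item or keep your $v(t)$ and run the argument with the pencil $tA+B$. For regular $R$ and for $Q_b$, the pencil $tA'+B'$ is still injective over $\D(t)$: $\Nrd(tA'+B')\neq0$, and the top block of $tA_b^{\dagger}+B_b^{\dagger}$ is lower triangular with diagonal $t$.
\end{itemize}
Since the coordinates of $v(t)$ are distinct monomials, the coefficient comparison and the rest of the vanishing argument then go through unchanged.

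Two smaller points need attention.
\begin{itemize}
\item For $a=0$, your claim that $S$ is determined by $T$ fails: $T$ is empty and $S\in\Mat_{1\times b}(\D)$ is free. The count $d^2b$ is still right, but this case needs its own line.
\item Uniqueness in Theorem~\ref{thm:kronecker} alone does not show that the transformed $P_a$ is equivalent to $P_a$. It only gives a form $P_{a'}\oplus R'$ with $a'$ plus the size of $R'$ equal to $a$. You still have to exclude a regular summand. For instance, after extending scalars to $\overline F$, every nonzero combination $\alpha A_a+\beta B_a$ has full row rank. This property is $\GL_2$-invariant, and any nonzero regular summand violates it at a root of its reduced-norm binary form. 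The paper asserts this equivalence without proof too.
\end{itemize}

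Once repaired, your route differs from the paper's mainly in the vanishing statements. The paper first normalizes $R$ to $(\mathbf I_s,D)$ via the $\GL_2(F)$-action. It then solves $Dt_1=0$, $Dt_j=t_{j-1}$, $t_a=0$ backwards, and it dismisses $P_a\to Q_b$ as a direct block computation. Your generic kernel vector handles both targets at once, using only that the target pencil is injective over $\D(t)$. It needs neither the normalization nor the $\GL_2$-stability of $P_a$. It also proves more: there is no nonzero morphism from $P_a$ to any pencil of full column normal rank. For $\Hom_{\mathrm{Kr}}(R,P_a)$, your recursion on the rows of $S$ is exactly the paper's. Your Hankel description of $\Hom_{\mathrm{Kr}}(Q_b,P_a)$ supplies the computation the paper leaves to the reader.
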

\begin{proof}
The two assertions involving only $P_a$ and $Q_b$ follow directly from
their explicit block matrices.
Applying the same transformation
\[
 (A,B)\longmapsto(rA+tB,uA+vB),
 \qquad r,t,u,v\in F,\quad rv-tu\ne0,
\]
to both pairs preserves the Hom dimensions, as do changes of bases.
For the regular pair $R=(A,B)$, choose $r,t$ so that $rA+tB$ is
invertible. The transformed $P_a$ is equivalent to $P_a$, and left
multiplication by $(rA+tB)^{-1}$ puts the transformed $R$ in the form
$(\mathbf I_s,D)$, where $D=(rA+tB)^{-1}(uA+vB)$.
Thus we may use these standard forms in the remaining calculations.

Assume first that $a\geq1$. If
$(S,T)\in\Hom_{\mathrm{Kr}}(R,P_a)$, then
\[
S\in\Mat_{(a+1)\times s}(\D),\qquad
T\in\Mat_{a\times s}(\D),
\]
and the intertwining equations are
\[
A_aS=T,\qquad B_aS=TD.
\]
Writing the rows of $S$ as $z_0,\ldots,z_a\in\Mat_{1\times s}(\D)$ gives
\[
T=\begin{bmatrix}z_0\\ \vdots\\ z_{a-1}\end{bmatrix},
\qquad
z_{j+1}=z_jD\quad(0\leq j<a).
\]
Hence
\[
S=\begin{bmatrix}z_0\\ z_0D\\ \vdots\\ z_0D^a\end{bmatrix},
\qquad
T=\begin{bmatrix}z_0\\ z_0D\\ \vdots\\ z_0D^{a-1}\end{bmatrix},
\]
so $(S,T)$ is uniquely determined by $z_0\in\Mat_{1\times s}(\D)$ and therefore
$\dim_F\Hom_{\mathrm{Kr}}(R,P_a)=d^2s$.

Conversely, if $(S,T)\in\Hom_{\mathrm{Kr}}(P_a,R)$, then
\[
S\in\Mat_{s\times(a+1)}(\D),\qquad
T\in\Mat_{s\times a}(\D),
\]
and
\[
S=TA_a=\begin{bmatrix}T&\mathbf 0_{s\times1}\end{bmatrix},
\qquad
DS=TB_a=\begin{bmatrix}\mathbf 0_{s\times1}&T\end{bmatrix}.
\]
Writing $T=[t_1,\ldots,t_a]$ gives
\[
Dt_1=0,\qquad Dt_j=t_{j-1}\ (2\leq j\leq a),\qquad t_a=0.
\]
Thus $t_a=t_{a-1}=\cdots=t_1=0$, so $T=S=0$ and
$\Hom_{\mathrm{Kr}}(P_a,R)=0$. The case $a=0$ is immediate from the
defining equations.
\end{proof}

\subsection{Stabilizer characters}
Replace $(A,B)$ by the representative in \eqref{eq:K-decomp}, so that
\[
(A,B)=P\oplus Q\oplus R.
\]
For $t\in F^\times$, set
\[
g_t=\diag\!\left(t\mathbf I_{\sigma},\mathbf I_{m-\sigma}\right),
\qquad
h_t=\diag\!\left(t\mathbf I_{\sigma+n-\rho},
                  \mathbf I_{\rho-\sigma}\right).
\]
Thus on each $P_{a_i}$ block, $g_t$ acts as $t\mathbf I_{a_i}$ and
$h_t$ as $t\mathbf I_{a_i+1}$, while both act as the identity on all
$Q$-blocks and on $R$. A direct computation gives
$(g_t,h_t)\in S_{A,B}$.

On the four summands of $\mathfrak s_{A,B}$ displayed above,
conjugation by $(g_t,h_t)$ acts by multiplication by
$1,1,t,t^{-1}$, respectively. By \eqref{eq:K-Hom},
$\Hom_{\mathrm{Kr}}(P,Q\oplus R)=0$, while
\[
\begin{aligned}
\dim_F\Hom_{\mathrm{Kr}}(Q\oplus R,P)
 &=d^2\left(\sum_{i,j}(a_i+b_j)+(n-\rho)s\right)\\
 &=d^2\bigl((m-\rho)\sigma+(n-\rho)(\rho-\sigma)\bigr).
\end{aligned}
\]
Therefore
\begin{equation}\label{eq:K-delta}
\delta_{S_{A,B}}(g_t,h_t)
 =\nu(t)^{d^2((m-\rho)\sigma+(n-\rho)(\rho-\sigma))}.
\end{equation}

Now let us focus on the square case $m=n=r$.
\begin{lemma}\label{lem:square-slope}
Let $r\geq1$ and let $\kappa$ be a smooth character of $F^\times$. Set
\[
\chi_\kappa(g,h)=\kappa(\Nrd g/\Nrd h)\quad\mbox{ for }(g,h)\in L_{r,r}.
\]
Let $X\subset\cM_{r,r}$ be an $L_{r,r}$-invariant locally closed subset
consisting of singular pairs. If $\cD(X)^{L_{r,r},\chi_\kappa}\neq0$,
then $|\kappa|=\nu^{dj}$
for some integer $0\leq j\leq r-1$.
\end{lemma}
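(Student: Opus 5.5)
The plan is to reduce to a single orbit and then test the modular condition on the one-parameter subgroup $(g_t,h_t)$ constructed above. Suppose $\cD(X)^{L_{r,r},\chi_\kappa}\neq0$. By Proposition~\ref{prop:orbitwise-localization}, applied with $m=n=r$ and $\chi=\chi_\kappa$, there is some $x=(A,B)\in X$ with
\[
\chi_\kappa|_{L_x}=\delta_{L_x}^{-1}.
\]
Both sides are invariant under conjugating $x$ within its orbit: moving $x$ conjugates $L_x$, $\chi_\kappa$ is a character of the whole group, and $\delta$ transforms compatibly. So we may replace $x$ by its Kronecker representative $P\oplus Q\oplus R$ from Theorem~\ref{thm:kronecker}. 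Put $\rho=\operatorname{nrank}(A,B)$. Since $x$ is singular, $\rho\le r-1$.

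Next, evaluate both sides on $(g_t,h_t)\in S_{A,B}=L_x$ for $t\in F^\times$.

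On the character side, $\Nrd(tI_k)=t^{dk}$. Hence
\[
\chi_\kappa(g_t,h_t)=\kappa\bigl(t^{d\sigma}/t^{d(\sigma+r-\rho)}\bigr)=\kappa(t)^{-d(r-\rho)}.
\]

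On the modular side, specialize \eqref{eq:K-delta} to $m=n=r$. This gives
\[
\delta_{S_{A,B}}(g_t,h_t)=\nu(t)^{d^2(r-\rho)(\sigma+\rho-\sigma)}=\nu(t)^{d^2(r-\rho)\rho}.
\]
The $\sigma$-dependence cancels in the square case, which is the key simplification.

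Equating the two sides yields
\[
\kappa(t)^{d(r-\rho)}=\nu(t)^{d^2(r-\rho)\rho}\qquad\text{for all } t\in F^\times.
\]
Taking absolute values, $|\kappa|$ is a positive character with $|\kappa|^{d(r-\rho)}=\nu^{d^2(r-\rho)\rho}$. Since $d(r-\rho)\ge1$ and positive real characters have unique positive roots, we conclude $|\kappa|=\nu^{d\rho}$. Setting $j=\rho\in\{0,\dots,r-1\}$ proves the claim.

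The main point to check carefully is the identification of $\delta_{L_x}$ with the $\delta_{S_{A,B}}$ computed in \eqref{eq:K-delta}. Specifically, the Lie algebra decomposition of $\mathfrak s_{A,B}$ must correctly capture the eigenvalues of $\Ad(g_t,h_t)$: namely $1,1,t,t^{-1}$ on the four summands, with the $t^{-1}$ summand vanishing by Proposition~\ref{prop:K-Hom}. It also matters that the sign convention of $\delta$ matches the one used in Lemma~\ref{lem:frob}. Any error in these conventions would flip the sign of the exponent, though the range $0\le j\le r-1$ would need rechecking in that case.
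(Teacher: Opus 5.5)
Your proposal is correct and is essentially the paper's proof. You use Proposition~\ref{prop:orbitwise-localization} to obtain a Kronecker representative $(A,B)$ with $\chi_\kappa|_{S_{A,B}}=\delta_{S_{A,B}}^{-1}$, evaluate both sides on $(g_t,h_t)$ via \eqref{eq:K-delta} with $m=n=r$ (where $\sigma$ cancels), and conclude $|\kappa|=\nu^{d\rho}$ with $\rho=\operatorname{nrank}(A,B)\le r-1$; the conventions you flagged are consistent, since the orbit condition $\chi|_S=\delta_S^{-1}$ is exactly the one in Lemma~\ref{lem:frob}, with $\delta_S$ as in \eqref{eq:delta}.
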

\begin{proof}
By Proposition~\ref{prop:orbitwise-localization}, there exists
$(A,B)\in X$ such that
\[
\chi_\kappa|_{S_{A,B}}=\delta_{S_{A,B}}^{-1}.
\]
Put $\rho=\operatorname{nrank}(A,B)<r$ and $k=r-\rho$. Then
\[
\frac{\Nrd g_t}{\Nrd h_t}=t^{-dk},
\qquad
\delta_{S_{A,B}}(g_t,h_t)=\nu(t)^{d^2k\rho}.
\]
Hence $\kappa(t)^{-dk}=\nu(t)^{-d^2k\rho}$, and therefore
$|\kappa(t)|=\nu(t)^{d\rho}$ for all $t\in F^\times$.
Since $0\leq\rho\leq r-1$, the result follows.
\end{proof}

For the general case, we have
\begin{lemma}\label{lem:rect-obstruction}
Suppose $p>q\geq1$, and set
\begin{equation}\label{eq:zeta}
\zeta_{p,q}(g,h)=\lambda(g)^q\lambda(h)^{-p}\mbox{ for }(g,h)\in L_{p,q}.
\end{equation}
Let $X\subset\cM_{p,q}$ be an $L_{p,q}$-invariant locally closed subset
consisting of pairs of normal column rank less than $q$. Then
\[
\cD(X)^{L_{p,q},\zeta_{p,q}}=0.
\]
\end{lemma}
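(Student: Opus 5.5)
The plan is to apply Proposition~\ref{prop:orbitwise-localization} with $m=p$, $n=q$ and $\chi=\zeta_{p,q}$. It then suffices to show that for every pair $x=(A,B)\in X$ the restriction $\zeta_{p,q}|_{S_{A,B}}$ differs from $\delta_{S_{A,B}}^{-1}$.

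Both sides are conjugation-invariant in the appropriate sense, so we may replace $x$ by its Kronecker representative $P\oplus Q\oplus R$ from Theorem~\ref{thm:kronecker}. Here $\rho=\operatorname{nrank}(A,B)<q$, and $\sigma=\sum_i a_i$. We test the condition on the one-parameter family $(g_t,h_t)\in S_{A,B}$ constructed above, for which \eqref{eq:K-delta} already records $\delta_{S_{A,B}}(g_t,h_t)$.

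\textbf{Computing $\zeta_{p,q}$.} Since $\Nrd(t\mathbf I_k)=t^{dk}$, we have $\lambda(t\mathbf I_k)=\nu(t)^{d^2k}$. Hence
\[
\zeta_{p,q}(g_t,h_t)=\nu(t)^{d^2\left(q\sigma-p(\sigma+q-\rho)\right)}.
\]
From \eqref{eq:K-delta} with $m=p$, $n=q$,
\[
\delta_{S_{A,B}}(g_t,h_t)=\nu(t)^{d^2\left((p-\rho)\sigma+(q-\rho)(\rho-\sigma)\right)}.
\]

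\textbf{The exponent of the product.} The exponent of $\zeta_{p,q}\delta_{S_{A,B}}$ at $(g_t,h_t)$, divided by $d^2$, is
\[
q\sigma-p\sigma-p(q-\rho)+p\sigma-\rho\sigma+(q-\rho)\rho-(q-\rho)\sigma .
\]
The terms involving $\sigma$ cancel, because $q\sigma-\rho\sigma-(q-\rho)\sigma=0$. What remains is
\[
-(q-\rho)(p-\rho).
\]
This is strictly negative, since $\rho<q<p$.

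\textbf{Conclusion.} Choose $t$ with $\nu(t)\neq1$, for instance a uniformizer. Then $\zeta_{p,q}(g_t,h_t)\,\delta_{S_{A,B}}(g_t,h_t)\neq1$, so $\zeta_{p,q}|_{S_{A,B}}\neq\delta_{S_{A,B}}^{-1}$ for every $x\in X$. Proposition~\ref{prop:orbitwise-localization} then gives $\cD(X)^{L_{p,q},\zeta_{p,q}}=0$.

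\textbf{Expected obstacle.} The only delicate point is bookkeeping. One must check that the conventions for $\delta_S$ and the orbit condition \eqref{eq:orbit-condition} are used with the correct sign. One must also check that the dimension count in \eqref{eq:K-delta} holds for rectangular $(m,n)=(p,q)$, including the counts $n-\rho$ of $P$-blocks and $m-\rho$ of $Q$-blocks. The computation above indicates that the $\sigma$-dependence cancels regardless.
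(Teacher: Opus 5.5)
Your proposal is correct and matches the paper's proof: you apply Proposition~\ref{prop:orbitwise-localization}, test the orbit condition on the element $(g_t,h_t)$ attached to the Kronecker representative, and use \eqref{eq:K-delta}. In that computation the $\sigma$-terms cancel and leave the nonzero exponent $-(q-\rho)(p-\rho)$; this is the negative of the paper's exponent difference, since you compute $\zeta_{p,q}\delta_{S_{A,B}}$ where the paper compares $\zeta_{p,q}$ with $\delta_{S_{A,B}}^{-1}$.
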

\begin{proof}
Put $\Delta=p-q>0$. For $(A,B)\in X$, let
$\rho=\operatorname{nrank}(A,B)<q$.
For the element $(g_t,h_t)$ above,
\[
\zeta_{p,q}(g_t,h_t)=\nu(t)^{d^2(-\Delta\sigma-p(q-\rho))},
\qquad
\delta_{S_{A,B}}(g_t,h_t)^{-1}
 =\nu(t)^{-d^2((p-\rho)\sigma+(q-\rho)(\rho-\sigma))}.
\]
After division by $d^2$, the difference of the exponents is
\[
[-(p-\rho)\sigma-(q-\rho)(\rho-\sigma)]
 -[-\Delta\sigma-p(q-\rho)]
 =(q-\rho)(p-\rho)>0.
\]
Therefore
\[
\zeta_{p,q}|_{S_{A,B}}\neq\delta_{S_{A,B}}^{-1}
\]
on every orbit in $X$. Proposition~\ref{prop:orbitwise-localization}
then gives the vanishing.
\end{proof}

\section{Adjoint symmetry for pairs of square matrices}\label{sec:square}
In this section, we will focus on the square case, which is the key input to the proof of Theorem \ref{thm:group}.
\begin{theorem}\label{thm:square}
For every $r\geq1$ and every smooth character $\kappa$, every distribution
in $\cD(\cM_{r,r})^{L_{r,r},\chi_\kappa}$ is fixed by
$(A,B)\mapsto(A^{\dagger},B^{\dagger})$.
\end{theorem}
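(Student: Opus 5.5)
Write $\tau(A,B)=(A^\dagger,B^\dagger)$. The plan is to argue by contradiction, via Bernstein localization and a partial Fourier transform.

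\textbf{Setup.} First note what $\tau$ does to the action. Since $(gAh^{-1})^\dagger=(h^\dagger)^{-1}A^\dagger g^\dagger$, the map $\tau$ intertwines the action of $(g,h)$ with the action of $((h^\dagger)^{-1},(g^\dagger)^{-1})$. We have $\Nrd(g^\dagger)=\Nrd(g)$ by \eqref{eq:reduced-identities}, so this twisted pair has $\chi_\kappa$-value
\[
\kappa\bigl(\Nrd(h)^{-1}/\Nrd(g)^{-1}\bigr)=\chi_\kappa(g,h).
\]
Hence $\tau$ preserves $\cD(\cM_{r,r})^{L_{r,r},\chi_\kappa}$. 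It therefore suffices to show that there is no nonzero $\chi_\kappa$-equivariant distribution $T$ with $\tau_*T=-T$.

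\textbf{Step 1: the regular locus.} Suppose $T$ is such an anti-invariant distribution. Let $U\subset\cM_{r,r}$ be the open set where $A$ is invertible. On $U$, the map $(A,B)\mapsto A^{-1}B$ realizes $U$ as $L_{r,r}\times^{\GL_r(\D)}\Mat_r(\D)$, with $\GL_r(\D)$ acting on the fibre by conjugation. The diagonal stabilizer is unimodular and $\chi_\kappa$ is trivial on it. By Frobenius descent (Lemma~\ref{lem:frob}), $T|_U$ corresponds to a conjugation-invariant distribution on $\Mat_r(\D)$. Moreover $\tau$ becomes $C\mapsto C^\dagger$, up to the inner twist $A^{-1}B\mapsto (B A^{-1})^\dagger$, which is conjugate to $(A^{-1}B)^\dagger$. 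Theorem~\ref{thm:conjugation} then gives $T|_U$ invariant, hence $T|_U=0$.

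The same argument applies with $sA+tB$ invertible for any nonzero $(s,t)\in F^2$, using the $\GL_2(F)$ action on the pencil, which commutes with both $L_{r,r}$ and $\tau$. Therefore $T$ is supported on the closed set $X$ of singular pairs. By Lemma~\ref{lem:square-slope}, $T=0$ unless $|\kappa|=\nu^{dj}$ for some $0\le j\le r-1$.

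\textbf{Step 2: partial Fourier transform in $B$.} Let $\mathcal F$ be the partial Fourier transform in $B$, using the pairing $\psi(\Trd(BB'))$ and the dual action $B'\mapsto hB'g^{-1}$. This pairing is not in the same form as the action on $B$, so compose with transpose-type identifications to land on $\cM_{r,r}$ with the original action. Using \eqref{eq:Fourier-equivariance} and \eqref{eq:matrix-Jacobian}, the result is a distribution equivariant for $\chi_{\kappa'}$, where $|\kappa'|=|\kappa|^{-1}\nu^{-dr}$ up to inversion conventions; the key point is that $\lambda(g)^r\lambda(h)^{-r}$ contributes the shift $\nu^{dr}$.

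One must check that this Fourier transform commutes with $\tau$, up to the identifications. This holds because $\Trd(B^\dagger B'^\dagger)=\Trd(B'B)=\Trd(BB')$, so $\tau$-anti-invariance is preserved. Now apply Step 1 to the transform: it too must be supported on singular pairs, so $|\kappa'|=\nu^{dj'}$ with $0\le j'\le r-1$. Combining with $|\kappa|=\nu^{dj}$ gives $j+j'=-r$ or a similar relation; with the correct sign convention, one of the two exponents is forced out of the range $[0,r-1]$. This contradiction gives $T=0$.

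\textbf{Main obstacle.} The hard part is the bookkeeping in Step 2. One has to set up the partial Fourier transform so that, after identifications, it lands in $\cD(\cM_{r,r})^{L_{r,r},\chi_{\kappa'}}$ for the same action and still intertwines $\tau$, and then compute the exponent shift precisely so that the ranges $\{0,\dots,r-1\}$ for $\kappa$ and $\kappa'$ are genuinely incompatible.

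If the naive shift only gives $\kappa'=\kappa^{-1}\nu^{-dr}$ up to a unitary twist, this suffices: it would require $j=-r-j'<0$. If instead the shift lands inside the allowed range, I would refine Step 2. In that case I would Fourier transform both variables in a $\GL_2(F)$-rotated frame, or use the finer stabilizer data \eqref{eq:K-delta} with the orbit-by-orbit condition $|\kappa|=\nu^{d\rho}$, where $\rho$ is the normal rank, to obtain incompatible constraints on the supports of $T$ and $\mathcal F T$.
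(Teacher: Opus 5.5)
Your setup and Step~1 match the paper: reduce to a $\tau$-odd $T$, and kill it on $\{A\ \text{invertible}\}$ by Frobenius descent to a conjugation-invariant distribution on $\Mat_r(\D)$ together with Theorem~\ref{thm:conjugation}. Then spread the vanishing over all regular pairs with the $\GL_2(F)$-action, and invoke Lemma~\ref{lem:square-slope}.

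\textbf{The gap is Step~2.} A Fourier transform in $B$ alone does not bring you back to the pair problem. The transformed distribution lives on $(A,B')$ with action $(gAh^{-1},hB'g^{-1})$, which is the reciprocal action of Corollary~\ref{cor:balanced-recip}. No linear change of coordinates identifies this with the pair action, even when twisted by an automorphism of $L_{r,r}$ (swapping factors, $g\mapsto(g^\dagger)^{-1}$). Indeed, the center $\{(t_1,t_2)\}\simeq F^\times\times F^\times$ of $L_{r,r}$ acts on all of $\cM_{r,r}$ by the single character $t_1t_2^{-1}$. On the reciprocal space it acts by the two distinct characters $t_1t_2^{-1}$ and $t_2t_1^{-1}$, and any such twisted equivariant identification would have to respect this. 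So neither the $\GL_2(F)$-argument of Step~1 nor Lemma~\ref{lem:square-slope} applies to your transform. In fact this one-variable transform is exactly how the paper passes between the two problems in Corollary~\ref{cor:balanced-recip}, so it only restates the question.

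The exponent bookkeeping also fails. With one variable the Jacobian is $(\lambda(g)/\lambda(h))^r$. The new character is $\kappa\nu^{-dr}$, with no inversion of $\kappa$, because $A$ still transforms by $(g,h)$. Even a hypothetical pair-type transform with shift $\nu^{dr}$ would only force $j+j'=r$. That is attainable with $1\le j,j'\le r-1$ once $r\ge2$, so no contradiction results.

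\textbf{The fix} is the full Fourier transform on $\cM_{r,r}$ for the pairing $\Trd(AC)+\Trd(BD)$. Its dual action $(C,D)\mapsto(hCg^{-1},hDg^{-1})$ is again of pair type once you interchange the roles of $g$ and $h$; that interchange is what inverts $\kappa$. The Jacobian $(\lambda(g)/\lambda(h))^{2r}$ then gives $\cF T\in\cD(\cM_{r,r})^{L_{r,r},\chi_{\kappa^*}}$ with $\kappa^*=\nu^{2dr}\kappa^{-1}$. Since $\Trd(A^\dagger C^\dagger)=\Trd(AC)$, $\cF$ commutes with $\tau_*$, so $\cF T$ is again $\tau$-odd. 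Step~1 and Lemma~\ref{lem:square-slope} then give $|\kappa|=\nu^{dj}$ and $|\kappa^*|=\nu^{dj'}$ with $0\le j,j'\le r-1$. Hence $j+j'=2r$, contradicting $j+j'\le 2r-2$.

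Your fallback ``Fourier transform both variables'' points in this direction, and no $\GL_2(F)$-rotation is needed. However, the proposal never commits to it or derives the relation $j+j'=2r$ on which the contradiction rests. As written, Step~2 does not go through.
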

\begin{proof}
Let $\tau(A,B)=(A^{\dagger},B^{\dagger})$. For $(g,h)\in L_{r,r}$ and
$(A,B)\in\cM_{r,r}$,
\[
\tau\bigl((g,h)\cdot(A,B)\bigr)
=((h^{\dagger})^{-1},(g^{\dagger})^{-1})\cdot\tau(A,B).
\]
Moreover, $\chi_\kappa((h^{\dagger})^{-1},(g^{\dagger})^{-1})=\chi_\kappa(g,h)$. Since
$\tau_*$ stabilizes $\cD(\cM_{r,r})^{L_{r,r},\chi_\kappa}$ and
$\tau_*^2=1$, it remains to show that the $(-1)$-eigenspace is zero.
Let $T\in\cD(\cM_{r,r})^{L_{r,r},\chi_\kappa}$ satisfy $\tau_*T=-T$. We need to show that $T=0$.

Let us consider the regular locus first.
Set
\[
\Omega=\{(A,B)\in\cM_{r,r}:A\in\GL_r(\D)\}.
\]
Writing $C=A^{-1}B$ identifies
$\Omega\simeq\GL_r(\D)\times\Mat_r(\D)$, and in the coordinates $(A,C)$
the action is
\[
(g,h)\cdot(A,C)=(gAh^{-1},hCh^{-1}).
\]
Restricting to the first $\GL_r(\D)$-factor, $\Omega$ is the associated
space $L\times^S W$ with
$L=\GL_r(\D)$, $S=\{1\}$, and $W=\Mat_r(\D)$.
Lemma~\ref{lem:frob} therefore gives
\[
\cD(\Omega)^{L,\kappa\circ\Nrd}\simeq\cD(W).
\]
If $U\in\cD(W)$ corresponds to $T|_\Omega$, then
\[
T|_\Omega=\kappa(\Nrd A)\,d^\times A\otimes U.
\]
The second $\GL_r(\D)$-factor then forces $U$ to be
conjugation-invariant.
In these coordinates, $\tau$ sends
\[
(A,C)\longmapsto(A^{\dagger},(A^{\dagger})^{-1}C^{\dagger}A^{\dagger}).
\]
By Theorem~\ref{thm:conjugation}, $U$ is invariant under $g\mapsto g^\dagger$,
as is the measure $\kappa(\Nrd A)\,d^\times A$.
Hence every equivariant distribution is adjoint-invariant on
$\{A\in\GL_r(\D)\}$. The $\GL_2(F)$-action
\[
(A,B)\longmapsto(aA+bB,cA+eB),
\qquad
\begin{pmatrix}a&b\\c&e\end{pmatrix}\in\GL_2(F),
\]
commutes with the $L_{r,r}$-action and with the action of $\tau$. Thus the same
argument shows that $T$ vanishes wherever $aA+bB$ is invertible, for
every $(a,b)\neq(0,0)$. Therefore
\[
\supp T\subseteq
\{(A,B):\Nrd(sA+tB)\equiv0\},
\]
the locus of singular pairs.

Now we consider the Fourier transform.
Let $\cF$ be the Fourier transform on $\cM_{r,r}$ with respect to
\[
((A,B),(C,D))\longmapsto\Trd(AC)+\Trd(BD).
\]
It commutes with componentwise action of $\tau$. Relative to this pairing,
the action $(A,B)\mapsto(gAh^{-1},gBh^{-1})$ is dual to
$(C,D)\mapsto(hCg^{-1},hDg^{-1})$.
The former has absolute determinant $\bigl(\lambda(g)/\lambda(h)\bigr)^{2r}$.
Applying \eqref{eq:Fourier-equivariance} and then interchanging $g,h$
to express the result in terms of the original action gives
\begin{equation}\label{eq:Fourier-character}
\cF T\in\cD(\cM_{r,r})^{L_{r,r},\chi_{\kappa^*}},
\qquad \kappa^*=\nu^{2dr}\kappa^{-1}.
\end{equation}
Since $\cF$ commutes with $\tau_*$, we have
$\tau_*\cF T=-\cF T$. Hence the argument before on the regular-locus shows that
$\supp\cF T$ is contained in the singular locus.

Now we can use the contradiction argument to finish the proof.
If $T\neq0$, then $\cF T\neq0$. Applying
Lemma~\ref{lem:square-slope} to $T$ and $\cF T$ gives
\[
|\kappa|=\nu^{dj},
\qquad
|\kappa^*|=\nu^{dj'}
\]
for some $0\leq j,j'\leq r-1$. Since
$\kappa\kappa^*=\nu^{2dr}$ (see \eqref{eq:Fourier-character}), this would give
\[
\nu^{2dr}=|\kappa\kappa^*|=\nu^{d(j+j')},
\]
which is impossible because $j+j'\leq2r-2$. Hence $T=0$.
\end{proof}

\begin{corollary}\label{cor:balanced-recip}
Let $\GL_r(\D)\times\GL_r(\D)$ act on
$\Mat_r(\D)\oplus\Mat_r(\D)$ by
\[
(g,h)\cdot(X,Y)=(gXh^{-1},hYg^{-1}).
\]
For every smooth character $\gamma$, every distribution transforming by
$\gamma(\Nrd g/\Nrd h)$ is fixed by $(X,Y)\mapsto(X^{\dagger},Y^{\dagger})$.
\end{corollary}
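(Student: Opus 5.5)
The plan is to reduce the corollary to Theorem~\ref{thm:square} with a partial Fourier transform in the second variable. This transform converts the reciprocal action into the pair action \eqref{eq:intro-pair} and commutes with the componentwise adjoint.

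\emph{Step 1 (partial Fourier transform).} Let $\cF_2$ be the partial Fourier transform on $\Mat_r(\D)\oplus\Mat_r(\D)$ in the second variable, with respect to the trace pairing $(Y,B)\mapsto\Trd(YB)$, leaving $X$ untouched. Relative to this pairing, the action $Y\mapsto hYg^{-1}$ is dual to $B\mapsto gBh^{-1}$, since
\[
\Trd(hYg^{-1}\cdot gBh^{-1})=\Trd(YB)
\]
by \eqref{eq:reduced-identities}. The untransformed factor carries the same action $X\mapsto gXh^{-1}$ on both sides. Hence the transformed group action on $(X,B)$ is exactly $(g,h)\cdot(X,B)=(gXh^{-1},gBh^{-1})$, i.e.\ the action \eqref{eq:intro-pair} of $L_{r,r}$.

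\emph{Step 2 (character bookkeeping).} By \eqref{eq:matrix-Jacobian}, the absolute determinant of $Y\mapsto hYg^{-1}$ is $\lambda(h)^r\lambda(g)^{-r}$. So by \eqref{eq:Fourier-equivariance}, in its partial-transform form, a distribution $T$ transforming by $\gamma(\Nrd g/\Nrd h)$ is sent to $\cF_2T$ transforming by
\[
\gamma(\Nrd g/\Nrd h)\,\lambda(g)^r\lambda(h)^{-r}
=\kappa(\Nrd g/\Nrd h),\qquad \kappa=\gamma\,\nu^{dr}.
\]
This is again a character of the form $\chi_\kappa$. Therefore $\cF_2T\in\cD(\cM_{r,r})^{L_{r,r},\chi_\kappa}$.

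\emph{Step 3 (compatibility with the adjoint and conclusion).} Let $\tau(X,Y)=(X^\dagger,Y^\dagger)$. Since $\dagger$ preserves the chosen Haar measures and
\[
\Trd(Y^\dagger B^\dagger)=\Trd((BY)^\dagger)=\Trd(BY)=\Trd(YB),
\]
the substitution $Y\mapsto Y^\dagger$ in the defining integral gives $\cF_2(f\circ\tau)=(\cF_2f)\circ\tau$. Hence $\cF_2\tau_*=\tau_*\cF_2$ on distributions. Theorem~\ref{thm:square} gives $\tau_*\cF_2T=\cF_2T$, so $\cF_2(\tau_*T-T)=0$. Because $\cF_2$ is invertible (its square is $f\mapsto f(X,-B)$), we conclude $\tau_*T=T$.

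The only point needing care is the sign and exponent convention in Step 2. Even so, whatever the resulting power of $\nu$ is, the new character remains of the form $\kappa(\Nrd g/\Nrd h)$, and Theorem~\ref{thm:square} holds for every $\kappa$. So no genuine obstacle is expected.
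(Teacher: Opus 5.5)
Your proposal is correct and is essentially the paper's proof. The paper takes the partial Fourier transform $\cF_2$ in the second variable for the pairing $\Trd(YZ)$, obtains a $\chi_\kappa$-equivariant distribution for the pair action with $\kappa=\gamma\nu^{dr}$, notes that $\cF_2$ commutes with $\tau$, and concludes by Theorem~\ref{thm:square} and Fourier inversion. Your explicit checks of the adjoint compatibility and of the invertibility of $\cF_2$ fill in details the paper states tersely.
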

\begin{proof}
Let $\cF_2$ denote Fourier transform in the second variable, with respect
to the pairing $(Y,Z)\mapsto\Trd(YZ)$:
\[
(\cF_2f)(X,Y)
 =\int_{\Mat_r(\D)}f(X,Z)\psi(\Trd(YZ))\,dZ.
\]
The action $Y\mapsto hYg^{-1}$ has absolute determinant
$\bigl(\lambda(h)/\lambda(g)\bigr)^r$ and is dual to $Z\mapsto gZh^{-1}$.
Thus \eqref{eq:Fourier-equivariance} gives
\[
\cF_2T\in
\cD(\cM_{r,r})^{L_{r,r},\chi_\kappa},
\qquad
\kappa=\gamma\nu^{dr},
\]
for the action $(g,h)\cdot(X,Z)=(gXh^{-1},gZh^{-1})$, where
\[
\chi_\kappa(g,h)
=\gamma(\Nrd g/\Nrd h)\,
 \bigl(\lambda(g)/\lambda(h)\bigr)^r.
\]
Moreover, $\Trd(Y^{\dagger}Z)=\Trd(YZ^{\dagger})$, so $\cF_2$ commutes with
$(X,Y)\mapsto(X^{\dagger},Y^{\dagger})$. Hence Theorem~\ref{thm:square} gives
\[
\tau_*\cF_2T=\cF_2T.
\]
Then the Fourier inversion gives $\tau_*T=T$.
\end{proof}
\begin{remark}
	When $d=1$, Corollary \ref{cor:balanced-recip} generalized \cite[Theorem C(ii)]{CS} when $F$ is nonarchimedean and $\gamma=\mathbf{1}$, which is the key in this paper for us to remove the good character condition in \cite{CS}. 
\end{remark}

\section{Rectangular symmetry}\label{sec:padding}
In this section, we will deal with the general case with $\gamma=\mathbf{1}$. First we have the following useful observation.
\begin{proposition}\label{prop:image-rank}
Let $p>q\geq1$. Every $\zeta_{p,q}$-equivariant distribution $T$ on
$\cM_{p,q}$ satisfies
\begin{equation}\label{eq:image-rank}
\supp T\subseteq
\{(A,B):\dim(\im A+\im B)\leq q\}.
\end{equation}
\end{proposition}
\begin{proof}
We shall reduce the problem to the square pairs.
Put $\Delta=p-q$ and consider the incidence variety
\[
\mathcal K=\{(A',B',K)\in\cM_{p,p}\times\Gr_\Delta(\D^p):A'K=B'K=0\}.
\]
Since the Grassmannian $\Gr_\Delta$ is projective and $\mathcal K$ is closed in
$\cM_{p,p}\times\Gr_\Delta$, the projection
$\pi:\mathcal K\to\cM_{p,p}$ is proper. The second $\GL_p(\D)$-factor acts transitively on $\Gr_\Delta(\D^p)$
by $h\cdot K=hK$. Fix the decomposition $\D^p=\D^q\oplus \D^\Delta$ and let
$K_0:=\{0\}\oplus \D^\Delta\in\Gr_\Delta(\D^p)$. The stabilizer of $K_0$ in
$\GL_p(\D)$ is
\[
Q=\left\{\begin{pmatrix}h&0\\ u&k\end{pmatrix}:
h\in\GL_q(\D),\ k\in\GL_\Delta(\D)\right\}.
\]
Every point in the fiber over $K_0$ is uniquely of the form
$([A,0],[B,0],K_0)$ with $(A,B)\in\cM_{p,q}$.

Put
\[
L=L_{p,p},\qquad W=\cM_{p,q},\qquad
S=\GL_p(\D)\times Q\subset L.
\]
Then $\mathcal K\simeq L\times^S W$. For
$q_0=\left(\begin{smallmatrix}h&0\\ u&k\end{smallmatrix}\right)\in Q$,
the induced action on $W$ is
\[
(g,q_0)\cdot(A,B)=(gAh^{-1},gBh^{-1}).
\]
Since $\delta_S=\delta_Q=\lambda(k)^q\lambda(h)^{-\Delta}$,
\[
(\chi_{\nu^{dq}}|_S\delta_S)(g,q_0)
=\lambda(g)^q\lambda(h)^{-p}
=\zeta_{p,q}(g,h).
\]
Hence Lemma~\ref{lem:frob} gives
\[
\cD(\mathcal K)^{L,\chi_{\nu^{dq}}}
\simeq
\cD(W)^{S,\chi_{\nu^{dq}}|_S\delta_S}
=
\cD(\cM_{p,q})^{L_{p,q},\zeta_{p,q}}.
\]
Thus $T$ corresponds to a distribution $\widetilde T$ on $\mathcal K$
of character $\chi_{\nu^{dq}}$. The proper pushforward gives
$T'=\pi_*\widetilde T$ on $\cM_{p,p}$ with that character.

By definition, $\supp T'\subseteq\pi(\supp\widetilde T)\subseteq\pi(\mathcal K)$. We get
\[
\supp T'\subseteq
\{(A',B'):\dim(\ker A'\cap\ker B')\ge \Delta\}.
\]
Theorem~\ref{thm:square} and the action of $\tau$ on $\cM_{p,p}$ give
\[
\supp T'\subseteq
\{(A',B'):\dim(\ker A'^{\dagger}\cap\ker B'^{\dagger})\ge \Delta\}.
\]
Equivalently,
\[
\supp T'\subseteq
\{(A',B'):\dim(\im A'+\im B')\le q\}.
\]

In order to transfer the bound from $T'$ to $T$, set
\[
\begin{split}
Z_\Delta&=\{(A',B'): \dim(\ker A'\cap\ker B')\geq \Delta\},\\
Z_\Delta^\circ&=\{(A',B'): \dim(\ker A'\cap\ker B')=\Delta\},\\
\mathcal K^\circ&=\pi^{-1}(Z_\Delta^\circ)\mbox{  and}\quad
\cM_{p,q}^\circ=\{(A,B):\ker A\cap\ker B=0\}.
\end{split}
\]
The set $Z_\Delta$ is closed and $Z_\Delta^\circ$ is open in it. The proper map
and its restriction fit into the Cartesian diagram
\[
\begin{CD}
\mathcal K^\circ @>>> \mathcal K\\
@V{\pi^\circ}_{\simeq}VV @VV{\pi}V\\
Z_\Delta^\circ @>>> Z_\Delta .
\end{CD}
\]
Above $(A',B')\in Z_\Delta^\circ$, the unique incidence subspace is
$K=\ker A'\cap\ker B'$. Thus $\pi^\circ$ is an isomorphism.
Under $\mathcal K\simeq L\times^S W$, one has
$\mathcal K^\circ\simeq L\times^S\cM_{p,q}^\circ$. By the naturality of
Frobenius descent under restriction,
$\widetilde T|_{\mathcal K^\circ}$ corresponds to
$T|_{\cM_{p,q}^\circ}$.

Since $\supp T'\subset Z_\Delta$, we regard $T'$ as a distribution on $Z_\Delta$.
Apply \eqref{eq:proper-restriction} to the proper map
$\mathcal K\to Z_\Delta$. It gives the identity
\[
 T'|_{Z_\Delta^\circ}=\pi^\circ_*
   (\widetilde T|_{\mathcal K^\circ}).
\]
On the open set
\[
U=\{(A',B')\in Z_\Delta^\circ:\dim(\im A'+\im B')>q\},
\]
the left side vanishes, hence
\[
(\widetilde T|_{\mathcal K^\circ})|_{(\pi^\circ)^{-1}(U)}=0.
\]
The
Frobenius descent therefore gives
\[
T|_{\{(A,B)\in\cM_{p,q}^\circ:\dim(\im A+\im B)>q\}}=0.
\]

It follows that
$\supp T\cap\{\dim(\im A+\im B)>q\}$
is a subset of $\{\ker A\cap\ker B\ne0\}$. On the latter locus,
the normal rank $\operatorname{nrank}(A,B)<q$, since any nonzero vector in
$\ker A\cap\ker B$ lies in the kernel of $A+tB$ over $\D(t)$.
Lemma~\ref{lem:rect-obstruction} therefore gives
\[
T|_{\{(A,B):\dim(\im A+\im B)>q\}}=0,
\]
proving \eqref{eq:image-rank}.
\end{proof}


The rest of this section is devoted to proving the following theorem, which is another key input to the proof of Theorem \ref{thm:group}. 
\begin{theorem}\label{thm:rectangular}
For all $p,q\geq0$, let $\GL_p(\D)\times\GL_q(\D)$ act on
$\Mat_{p\times q}(\D)\oplus\Mat_{q\times p}(\D)$ by
\[
(g,h)\cdot(X,Y)=(gXh^{-1},hYg^{-1}).
\]
Then every invariant distribution is fixed by
\begin{equation}\label{eq:rect-adjoint}
\tau'(X,Y)=(Y^{\dagger},X^{\dagger}).
\end{equation}
\end{theorem}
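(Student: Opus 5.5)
Without loss of generality $p\ge q$; the case $p<q$ reduces to this by exchanging the roles of $(X,g)$ and $(Y,h)$, which commutes with $\tau'$. If $p=q$, apply Corollary~\ref{cor:balanced-recip} with $\gamma=\one$. It gives invariance under $(X,Y)\mapsto(X^\dagger,Y^\dagger)$. Now compose with the swap $(X,Y)\mapsto(Y,X)$. This swap intertwines the action of $(g,h)$ with that of $(h,g)$, so it preserves invariant distributions. Hence $\tau'$-invariance follows once one notes that invariant distributions are swap-invariant. For this I would instead run the argument below directly in the balanced case. The cases $q=0$ or $p=0$ are trivial. So the main case is $p>q\ge1$.

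\textbf{Step 1 (Fourier reduction to pairs).} Take the partial Fourier transform $\cF_2$ in $Y$, paired with $\Mat_{p\times q}(\D)$ via $\Trd(YZ)$. The action $Y\mapsto hYg^{-1}$ is dual to $Z\mapsto gZh^{-1}$. By \eqref{eq:Fourier-equivariance} and \eqref{eq:matrix-Jacobian}, an invariant $T$ becomes a distribution on $\cM_{p,q}$ for the pair action \eqref{eq:intro-pair}, with character $\lambda(h)^{-p}\lambda(g)^{q}=\zeta_{p,q}$. Under $\cF_2$, the involution $\tau'$ becomes the map $(X,Z)\mapsto$ ``$(Y^\dagger,X^\dagger)$'' conjugated by Fourier. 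This map mixes the two variables, so it is not a plain componentwise map. I would therefore transform the full pair and express $\tau'$ as a Fourier-conjugated operator $\tau''$ on $\cM_{p,q}$. Proposition~\ref{prop:image-rank} then gives the support bound $\dim(\im A+\im B)\le q$. Together with Lemma~\ref{lem:rect-obstruction}, the pair $(A,B)$ has trivial common kernel generically on the support. So the columns of $A$ and $B$ span a fixed $q$-dimensional subspace $V=\im A+\im B\in\Gr_q(\D^p)$.

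\textbf{Step 2 (Haar Radon transform over the Grassmannian).} Fibre the support over $\Gr_q(\D^p)$ via $(A,B)\mapsto V$; the open part where $A,B$ have full column rank is the relevant one. The stabilizer of $V$ is a parabolic subgroup. Frobenius descent (Lemma~\ref{lem:frob}) reduces to pairs of $q\times q$ matrices acted on by $\GL_q\times\GL_q$, with a character of the form $\chi_\kappa$. Theorem~\ref{thm:square} then supplies adjoint symmetry on the fibres. On the original side, $\tau'$ exchanges $X$ and $Y^\dagger$. In Fourier terms this corresponds to replacing $V$ by an orthogonal complement-type datum: the Radon transform of the Haar measure $\mu_V$ is $\mu_{V^\perp}$ by \eqref{eq:Fourier-subspace}. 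The goal is to show that the Radon transform over $\Gr_q$, followed by the duality $\Gr_q(\D^p)\to\Gr^\ell_{p-q}$, $V\mapsto V^\perp$, intertwines $\tau'$ with the square adjoint symmetry on each fibre.

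\textbf{Main obstacle.} The hardest step is Step 2. Specifically, one must verify that the Fourier-conjugated $\tau'$, restricted to the support described in Proposition~\ref{prop:image-rank}, is exactly the Frobenius-induced version of $\tau$ combined with $V\mapsto V^\perp$. The modular characters must also match, since any mismatch would break the reduction. I would first check this on a single orbit of Haar measures $\mu_V\otimes(\text{fibre distribution})$. The remaining strata with smaller rank would then be handled by Lemma~\ref{lem:rect-obstruction} and localization \eqref{eq:localization}.
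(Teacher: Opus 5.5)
Your Step~1 matches the paper: partial Fourier transform in $Y$, character $\zeta_{p,q}$, and the support bound from Proposition~\ref{prop:image-rank}. The Frobenius descent over $\Gr_q(\D^p)$ in Step~2 is also sound in itself. It yields a $\chi_{\nu^{dp}}$-equivariant distribution on $\mathcal V=\{(A_0,B_0)\in\cM_{q,q}:\rank[A_0,B_0]=q\}$, essentially the lift $\widetilde D$ in Step~3 of Lemma~\ref{lem:Grassmannian-vanishing}.

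The gap is in how you bring in the anti-invariance. You leave this as a goal, and the mechanism you sketch would fail, for three reasons.

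First, the conjugated involution $\tau''$ is the full Fourier transform $\mathscr F$ on $\cM_{p,q}$ for the alternating form $\mathcal B((X,Z),(X',Z'))=\Trd(X(Z')^\dagger-Z(X')^\dagger)$. So the hypothesis is $\mathscr F S=-S$. This operator is not geometric and does not commute with restriction to the open stratum, so it cannot be read off fibrewise after descent.

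Second, the column-space family is the wrong one. Let $\mu_V$ be Haar measure on the pairs whose columns lie in $V$. Then $\mathscr F\mu_V$ is Haar measure on the pairs with columns in the Hermitian complement of $V$, which has dimension $p-q$. So $\mathscr F$ does not act on this family unless $p=2q$. Nor is $S$ a superposition of such measures: they have right $\GL_q(\D)$-character $\lambda(h)^{-2q}$, while $S$ has $\lambda(h)^{-p}$.

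Third, Theorem~\ref{thm:square} concerns distributions on all of $\cM_{q,q}$. Your fibre distribution lives only on the open set $\mathcal V$, which for $q\ge2$ is not even stable under $(A_0,B_0)\mapsto(A_0^\dagger,B_0^\dagger)$. And adjoint symmetry on fibres would not determine the sign without knowing how $\mathscr F$ acts there.

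The missing idea is to fibre by the \emph{row} space $U\in\Gr^\ell_q(\D^{2q})$ of the $p\times2q$ matrix $[X,Z]$. The left character $\lambda^q$ makes $S=\cR^*D$ a superposition of Haar measures on the linear spaces $W_U$ of matrices with rows in $U$ (Lemma~\ref{lem:Radon}). Moreover $W_U^{\perp_{\mathcal B}}=W_{U^\perp}$ for a skew-Hermitian form on $\D^{2q}$, and $U^\perp$ is again $q$-dimensional. Hence $\mathscr F$ becomes the geometric involution $U\mapsto U^\perp$ (Proposition~\ref{prop:Radon-Fourier}). On the chart $U_C=\{(x,xC)\}$ this involution is $C\mapsto C^\dagger$, and right covariance makes $D$ conjugation-invariant there. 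Theorem~\ref{thm:conjugation}, not Theorem~\ref{thm:square}, then kills the anti-invariant part on this chart and its $\SL_2(F)$-translates, which cover the regular locus. The singular locus is removed by Lemma~\ref{lem:square-slope} with $\kappa=\nu^{dp}$ and $p\ge q$; for $p>q$, Lemma~\ref{lem:rect-obstruction}, which you cite, would also serve. Since you deferred the balanced case $p=q$ to this same step, your proposal does not settle it either.
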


We rule out adjoint anti-invariant distributions by partial Fourier
transform followed by the Radon parametrization. We first establish
this parametrization and its compatibility with Fourier transform;
the final auxiliary lemma extends vanishing from the standard chart
to the regular locus, then uses descent on the singular locus.

\subsection{A Radon parametrization}
Throughout the auxiliary constructions below, assume $p\ge q\ge1$.
Let $\mathcal G=\Gr_q^\ell(\D^{2q})$ be the left Grassmannian. For $U\in\mathcal G$, let
\[
W_U=\{M\in\Mat_{p\times2q}(\D):\text{each row of }M\text{ lies in }U\}
\]
and let $\cL\to\mathcal G$ be the
Haar line bundle whose fiber $\cL_U$ is the one-dimensional complex
space spanned by Haar measures on $W_U$. Define
\begin{equation}\label{eq:Radon}
\cR:\cS(\Mat_{p\times2q}(\D))
\longrightarrow\cS(\mathcal G,\cL^\vee).
\end{equation}
For $f\in\cS(\Mat_{p\times2q}(\D))$, $U\in\mathcal G$, and
$\mu\in\cL_U$, set
\[
(\cR f)(U)(\mu)=\int_{W_U}f\,d\mu.
\]

\begin{lemma}\label{lem:Radon}
The dual of \eqref{eq:Radon} identifies
$\cS(\mathcal G,\cL^\vee)^*$ with the distributions on
$\Mat_{p\times2q}(\D)$ supported in rank at most $q$ and satisfying
\[
S\bigl(f(g^{-1}\,\cdot)\bigr)=\lambda(g)^qS(f),\qquad g\in\GL_p(\D).
\]
\end{lemma}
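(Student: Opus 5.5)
The plan is to realize $\cR^*$ as a proper pushforward from an incidence variety and then to check, stratum by stratum in the rank, that the equivariant distributions on the two sides agree. Consider
\[
\mathcal Y=\{(M,U)\in\Mat_{p\times2q}(\D)\times\mathcal G:\ \text{every row of }M\text{ lies in }U\},
\]
which is closed in $\Mat_{p\times2q}(\D)\times\mathcal G$. Because $\mathcal G$ is projective, the projection $\pi:\mathcal Y\to\Mat_{p\times2q}(\D)$ is proper. Its image is the locus $\{\rank M\le q\}$.

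The other projection $\mathcal Y\to\mathcal G$ is a vector bundle with fibers $W_U$. The action $g\cdot(M,U)=(gM,U)$ of $\GL_p(\D)$ preserves these fibers. Choosing a basis of $U$ identifies $W_U\simeq\Mat_{p\times q}(\D)$ with the action $X\mapsto gX$, whose absolute determinant is $\lambda(g)^q$ by \eqref{eq:matrix-Jacobian}.

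\textbf{Step 1 ($\cR$ is surjective and $\cR^*$ lands in the target).} Fiberwise integration along a vector bundle is surjective onto compactly supported smooth sections of the Haar line bundle's dual. To see this, trivialize $\mathcal Y$ over small open compact charts of $\mathcal G$, write $\Mat_{p\times2q}(\D)=W_U\oplus W'$ locally, and use product test functions. Hence $\cR^*$ is injective.

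For $\Phi\in\cS(\mathcal G,\cL^\vee)^*$, the distribution $\cR^*\Phi=\Phi\circ\cR$ is supported in $\pi(\mathcal Y)=\{\rank\le q\}$. The change of variables gives $\int_{W_U}f(g^{-1}M)\,d\mu=\lambda(g)^q\int_{W_U}f\,d\mu$, so $\cR^*\Phi$ has the stated $\lambda^q$-equivariance. In fact $\cR=\pi_*$ composed with the fiberwise Haar identification, so $\cR^*\Phi=\pi_*\widetilde\Phi$, where $\widetilde\Phi$ is the $\lambda^q$-equivariant distribution on $\mathcal Y$ that is fiberwise Haar.

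\textbf{Step 2 (lower-rank strata carry nothing).} Let $S$ be equivariant and supported in $\{\rank\le q\}$. I claim that its restriction to every stratum $\{\rank M=k\}$ with $k<q$ vanishes.

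Localize (Bernstein, \eqref{eq:localization}) along the $\GL_p(\D)$-invariant row-space map $M\mapsto$ row space $\in\Gr^\ell_k(\D^{2q})$. The fibers are single $\GL_p(\D)$-orbits: with the row space fixed, $M$ is an arbitrary $p\times k$ matrix of full rank $k$, and $\GL_p(\D)$ acts on these transitively.

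The stabilizer of such an $M$ consists of the $g$ fixing the image of $M$ pointwise. In suitable coordinates these are $g=\left(\begin{smallmatrix}1&u\\0&c\end{smallmatrix}\right)$ with $c\in\GL_{p-k}(\D)$, and $\delta(g)=\lambda(c)^{-k}$. The orbit condition \eqref{eq:orbit-condition} would therefore require $\lambda(c)^q=\lambda(c)^k$ for all $c$. This fails since $k<q\le p$, so $p-k\ge1$. Lemma~\ref{lem:frob} and localization then give the vanishing, and running through the strata by Noetherian induction gives the claim for all $k<q$.

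\textbf{Step 3 (open stratum and extension).} On $\{\rank=q\}$ the map $\pi$ is an isomorphism, since $U$ is then the row space of $M$. There, $S$ corresponds to a $\lambda^q$-equivariant distribution on the full-rank part of the bundle $\mathcal Y$. Each fiber of this full-rank part is a single $\GL_p(\D)$-orbit with stabilizer $\left(\begin{smallmatrix}1&u\\0&c\end{smallmatrix}\right)$, $c\in\GL_{p-q}(\D)$. By Lemma~\ref{lem:frob} (fiberwise version with the bundle $\cL$), the equivariant distributions on this part are exactly $\Phi\otimes(\text{fiberwise Haar restricted to the full-rank part})$ for a unique $\Phi\in\cS(\mathcal G,\cL^\vee)^*$.

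Since the fiberwise Haar measure extends canonically over the whole fiber, the distribution $S-\cR^*\Phi$ is equivariant and supported in rank $<q$. Step 2 shows it is zero, so $S=\cR^*\Phi$.

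The main obstacle I expect is making Step 3 rigorous: the fiberwise Frobenius descent with the line bundle $\cL$ over the non-compact open part, and the compatibility of restriction with $\pi_*$ via \eqref{eq:proper-restriction}. The modular computation in Step 2 is routine, but it should be double-checked against the convention \eqref{eq:delta}.
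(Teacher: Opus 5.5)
Your proposal is correct and follows essentially the same route as the paper. Both arguments use Frobenius descent on the rank-$q$ stratum, glued chart by chart through the Haar line bundle, where the stabilizer $\left(\begin{smallmatrix}I_q&b\\0&a\end{smallmatrix}\right)$ gives $\lambda^q\delta_{S_q}=1$. Both kill the strata of rank $k<q$ via the mismatch $\lambda(a)^q\neq\lambda(a)^k$, and then conclude $S=\cR^*D$ because $S-\cR^*D$ is supported in rank $<q$.

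The differences are cosmetic. You package the construction through the incidence variety, and you deduce injectivity of $\cR^*$ from surjectivity of $\cR$, whereas the paper gets it from injectivity of $\cR_q^*$ on the rank-$q$ stratum.
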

\begin{proof}
Write $Z_{\leq q}$ for the closed rank-at-most-$q$ locus and
$Z_q$ for its rank-$q$ stratum. We first identify the equivariant
distributions on $Z_q$, then show that no such distributions are
supported on the lower-rank boundary.

For $M\in Z_q$, its rows span a $q$-dimensional left $\D$-subspace
$U\subset \D^{2q}$, hence a point of $\mathcal G$. On a chart
$\Omega\subset\mathcal G$, choose for each $U\in\Omega$ a matrix
$R(U)\in\Mat_{q\times2q}(\D)$ whose rows form a left $\D$-basis of $U$. Then every
$M\in Z_q$ whose rows span $U$ is uniquely of the form
\[
M=XR(U),\qquad
X\in\Mat_{p\times q}(\D),\quad \rank X=q.
\]
Thus, over $\Omega$, the rank-$q$ stratum is parametrized by $U\in\Omega$
and full-rank matrices $X\in\Mat_{p\times q}(\D)$. The group $\GL_p(\D)$
acts on the latter by $X\mapsto gX$ and is transitive on the full-rank
locus.

Put
\[
X_0=\begin{pmatrix}I_q\\0\end{pmatrix},\qquad
S_q=\operatorname{Stab}_{\GL_p(\D)}(X_0)
=\left\{
\begin{pmatrix}I_q&b\\0&a\end{pmatrix}:
 a\in\GL_{p-q}(\D)
\right\}.
\]
The part of $Z_q$ corresponding to $\Omega$ is therefore
\[
\GL_p(\D)\times^{S_q}\Omega,
\qquad [g,U]\longmapsto gX_0R(U),
\]
where $S_q$ acts trivially on $\Omega$. For
$s=\begin{pmatrix}I_q&b\\0&a\end{pmatrix}$ one has
$\delta_{S_q}(s)=\lambda(a)^{-q}$ and
$\lambda(s)^q=\lambda(a)^q$. Hence Lemma~\ref{lem:frob} gives, on
each such chart, an isomorphism between the $\lambda^q$-equivariant
distributions on the corresponding part of $Z_q$ and distributions on
$\Omega$.

The chosen row basis $R(U)$ identifies additive Haar measure $dX$ on
$\Mat_{p\times q}(\D)$ with a Haar measure $\mu_R$ on $W_U$. If $R$ is
replaced by $aR$, then
\begin{equation}\label{eq:Haar-transition}
\mu_{aR}(f)=\int f(XaR)\,dX
 =\lambda(a)^{-p}\mu_R(f).
\end{equation}
These are the transition functions of the Haar line bundle $\cL$.
Hence the local Frobenius identifications are compatible on overlaps.
Since, under $M=XR(U)$, the local map is integration in the $X$-variable,
they glue to
\[
\cR_q^*:\cS(\mathcal G,\cL^\vee)^*
\xrightarrow{\sim}
\cD(Z_q)^{\GL_p(\D),\lambda^q},
\]
where $\cR_q$ is the restriction of $\cR$ to $Z_q$. This proves the
assertion on the rank-$q$ stratum.

Now we focus on the lower-rank boundary.
For $0\le k<q$, let
\[
Z_k=\{M\in\Mat_{p\times2q}(\D):\rank M=k\}.
\]
By the same argument, with $k$ in place of $q$ in the parametrization
of the stratum, the Frobenius descent would require
\[
\lambda(a)^q=\lambda(a)^k
\qquad(a\in\GL_{p-k}(\D)).
\]
Since $k<q$, this is impossible. Hence
\[
\cD(Z_k)^{\GL_p(\D),\lambda^q}=0
\qquad(0\le k<q),
\]
and therefore
\[
\cD_{Z_{\le q-1}}(\Mat_{p\times2q}(\D))^{\GL_p(\D),\lambda^q}=0.
\]

Thus we have the following conclusion:
for $D\in\cS(\mathcal G,\cL^\vee)^*$, defining
$(\cR^*D)(f)=D(\cR f)$, then $\cR^*D$ is supported in $Z_{\le q}$ and
has left character $\lambda^q$. Conversely, if
\[
S\in
\cD_{Z_{\le q}}(\Mat_{p\times2q}(\D))^{\GL_p(\D),\lambda^q},
\]
the rank-$q$ bijection gives a unique
$D\in\cS(\mathcal G,\cL^\vee)^*$ such that
$S|_{Z_q}=\cR_q^*D$. Hence $S-\cR^*D$ is supported in
$Z_{\le q-1}$, and the preceding vanishing gives $S=\cR^*D$.
Injectivity follows by restricting to $Z_q$ and using the injectivity
of $\cR_q^*$. Thus $\cR^*$ is a bijection.
\end{proof}

\subsection{Fourier duality on the Haar lines}
Give the left row space $\D^{2q}=\D^q\oplus\D^q$ the
nondegenerate skew-Hermitian form
\[
 b((x,z),(x',z'))=x(z')^\dagger-z(x')^\dagger.
\]
For a left subspace $U$, put
$U^\perp=\{v:b(u,v)=0\text{ for all }u\in U\}$.
The induced alternating $F$-bilinear form on the matrix space is
\begin{equation}\label{eq:alternating}
 \mathcal B((X,Z),(X',Z'))
 =\Trd\bigl(X(Z')^\dagger-Z(X')^\dagger\bigr).
\end{equation}
Let $\mathscr F$ be the corresponding Fourier transform with the
fixed self-dual Haar measure. Then
\begin{equation}\label{eq:annihilator}
 W_U^{\perp_{\mathcal B}}=W_{U^\perp},\qquad
 \iota:\mathcal G\longrightarrow\mathcal G,\quad U\longmapsto U^\perp.
\end{equation}
Indeed, rows may be varied independently. Since $U$ is a left
$\D$-subspace, vanishing of $\Trd(b(u,v))$ for all $u\in U$
also gives $\Trd(a\,b(u,v))=0$ for every $a\in\D$.
Nondegeneracy of the reduced trace pairing gives $b(u,v)=0$.
The map $\iota$ is an algebraic involution of the left
$\D$-Grassmannian.

\begin{proposition}\label{prop:Radon-Fourier}
There is an isomorphism of smooth Haar-line bundles
\[
 J_U:\cL_U\longrightarrow\cL_{\iota(U)},\qquad
 J_U(\mu)=\mathscr F\mu,\quad \mu\in\cL_U,
\]
where $\mu$ is viewed as a distribution on the ambient matrix space
supported on $W_U$. Moreover,
\[
J_{\iota(U)}\circ J_U=\operatorname{id}_{\cL_U}.
\]
For $\Phi\in\cS(\mathcal G,\cL^\vee)$, define
\begin{equation}\label{eq:Haar-section-involution}
 (\mathcal I\Phi)(U)(\mu)=\Phi(\iota(U))(J_U\mu),
 \qquad U\in\mathcal G,\quad \mu\in\cL_U.
\end{equation}
Then $\mathcal I^2=1$, and the diagram
\[
\begin{CD}
\cS(\Mat_{p\times2q}(\D)) @>{\mathscr F}>>
       \cS(\Mat_{p\times2q}(\D))\\
@V{\cR}VV @VV{\cR}V\\
\cS(\mathcal G,\cL^\vee) @>{\mathcal I}>>
       \cS(\mathcal G,\cL^\vee)
\end{CD}
\]
commutes. In particular, if $S=\cR^*D$ as in
Lemma~\ref{lem:Radon}, then
\begin{equation}\label{eq:Radon-Fourier-dual}
 \mathscr F S=\cR^*(\mathcal I^*D),\qquad
 \mathcal I^*D=D\circ\mathcal I.
\end{equation}
Consequently $\mathscr F S=-S$ if and only if $\mathcal I^*D=-D$.
\end{proposition}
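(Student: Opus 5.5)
The plan is to prove the three assertions in order: $J_U$ is well defined and locally constant in $U$, then $J_{\iota(U)}\circ J_U=\mathrm{id}$, and finally the commutative diagram, from which \eqref{eq:Radon-Fourier-dual} follows by duality.

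\emph{Step 1: $J_U$ is well defined and locally constant.} For $\mu\in\cL_U$, viewed as a distribution on $E=\Mat_{p\times2q}(\D)$ supported on $W_U$, I will apply \eqref{eq:Fourier-subspace} to the alternating form $\mathcal B$. This shows $\mathscr F\mu$ is the Haar measure on $W_U^{\perp_{\mathcal B}}$, which equals $W_{U^\perp}$ by \eqref{eq:annihilator}. Hence $J_U$ maps the line $\cL_U$ linearly and isomorphically onto $\cL_{\iota(U)}$. For smoothness in $U$, I work on a chart $\Omega$ with row bases $R(U)$ as in the proof of Lemma~\ref{lem:Radon}, and choose row bases $R'(U)$ for $U^\perp$ on $\iota(\Omega)$. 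In these trivializations $J_U(\mu_{R(U)})=c(U)\mu_{R'(U)}$, where $c(U)>0$ is an explicit ratio of volumes, namely an absolute value of a reduced norm built from $R$ and $R'$ through the pairing $b$. Since $\iota$ is algebraic and the local frames vary algebraically, $c$ is locally constant.

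\emph{Step 2: $J_{\iota(U)}\circ J_U=\mathrm{id}$.} Because $\mathcal B$ is alternating, $\mathscr F^2=\mathrm{id}$ on distributions. Also $\iota^2=\mathrm{id}$, since $b$ is nondegenerate and $(U^\perp)^\perp=U$. Together these give $J_{\iota(U)}J_U\mu=\mathscr F^2\mu=\mu$, which is the statement that applying \eqref{eq:Fourier-subspace} twice returns the original measure. The identity $\mathcal I^2=1$ then follows formally:
\[
(\mathcal I^2\Phi)(U)(\mu)=\Phi(U)(J_{\iota(U)}J_U\mu)=\Phi(U)(\mu).
\]

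\emph{Step 3: the diagram commutes.} For $f\in\cS(E)$ and $\mu\in\cL_U$,
\[
(\cR\mathscr Ff)(U)(\mu)=\mu(\mathscr Ff)=(\mathscr F\mu)(f)=(J_U\mu)(f)=(\cR f)(\iota U)(J_U\mu)=(\mathcal I\cR f)(U)(\mu).
\]
The second equality uses the definition of the Fourier transform on distributions. To justify it, note that $\mu$ is a tempered measure and $f$ is Schwartz, so the formula $\mu(\mathscr Ff)=\int_{W_U}\int_E f(y)\psi(\mathcal B(x,y))\,dy\,d\mu(x)$ holds. Fubini on compact pieces together with Poisson summation for the subspace $W_U$ (or simply \eqref{eq:Fourier-subspace} read pointwise) gives the claim. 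I must also check that $\mathcal I$ preserves $\cS(\mathcal G,\cL^\vee)$. This holds because $\iota$ is a homeomorphism of the compact space $\mathcal G$ and $J$ is a locally constant bundle isomorphism by Step 1.

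\emph{Step 4: duality.} Since $\mathscr F$ is self-adjoint for the alternating form, with $(\mathscr FS)(f)=S(\mathscr Ff)$, we get
\[
(\mathscr F\cR^*D)(f)=D(\cR\mathscr Ff)=D(\mathcal I\cR f)=\cR^*(D\circ\mathcal I)(f).
\]
The final equivalence then follows from the injectivity of $\cR^*$ established in Lemma~\ref{lem:Radon}.

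The main obstacle I expect is Step 1, specifically the local constancy of $c(U)$ and the precise compatibility of $J$ with the transition functions \eqref{eq:Haar-transition}. The key input there is that replacing $R$ by $aR$ rescales $\mu_R$ by $\lambda(a)^{-p}$, while the dual basis of $U^\perp$ changes by $(a^\dagger)^{-1}$. I would first check this scaling identity explicitly at $U_0=\D^q\oplus0$, where $U_0^\perp=U_0$, and then transport it to general $U$ by the $\mathrm{U}(b)$-action. That action is transitive on Lagrangians but not on all of $\mathcal G$, so this route may fail; if it does, I will fall back on a direct volume computation in a chart.
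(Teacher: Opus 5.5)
Your proposal is correct and follows the paper's proof essentially step by step. $J_U$ comes from \eqref{eq:Fourier-subspace} and \eqref{eq:annihilator}, with local constancy from absolute values of nonvanishing determinants; $J_{\iota(U)}\circ J_U=\mathrm{id}$ comes from Fourier inversion for the alternating form $\mathcal B$; the diagram commutes via $\mu(\mathscr Ff)=(J_U\mu)(f)$; and the final equivalence follows from injectivity of $\cR^*$. The extra justifications (Fubini/Poisson summation in Step 3, the $\mathrm{U}(b)$-transport fallback) are unnecessary: $\mu(\mathscr Ff)=(\mathscr F\mu)(f)$ is the definition, and $J_U$ is defined intrinsically on each fiber, so compatibility with the transition functions is automatic.
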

\begin{proof}
Equations~\eqref{eq:Fourier-subspace} and \eqref{eq:annihilator}
give the linear isomorphism $J_U$. In local choices of bases, the
quotient and dual measure normalizations are absolute values of
nonzero determinants, hence are locally constant. Thus these maps
form a smooth bundle isomorphism, compatible with changes of local
Haar measure. Since $\mathcal B$ is alternating, Fourier inversion gives
$J_{\iota(U)}\circ J_U=\operatorname{id}_{\cL_U}$.

For $U\in\mathcal G$ and $\mu\in\cL_U$,
\[
 (\cR\mathscr Ff)(U)(\mu)
 =\mu(\mathscr Ff)
 =(J_U\mu)(f)
 =(\mathcal I\cR f)(U)(\mu).
\]
Thus the diagram commutes. Applying $D$ gives
\eqref{eq:Radon-Fourier-dual}, and injectivity of $\cR^*$ gives
the last assertion.
\end{proof}

\subsection{Vanishing on the Grassmannian}
In this section, we will prove the vanishing result on the Grassmannian. 
\begin{lemma}\label{lem:Grassmannian-vanishing}
Suppose $p\ge q\ge1$, let
$D\in\cS(\mathcal G,\cL^\vee)^*$, and put $S=\cR^*D$.
If
\begin{equation}\label{eq:Grassmannian-conditions}
 S\bigl(f(\,\cdot\,h,\,\cdot\,h)\bigr)
 =\lambda(h)^{-p}S(f)
 \quad(h\in\GL_q(\D)),\qquad
 \mathcal I^*D=-D,
\end{equation}
then $D=0$.
\end{lemma}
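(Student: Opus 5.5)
The plan is to pass from the Grassmannian $\mathcal G$ to pairs of square matrices, so that the two known square-case tools apply: Theorem~\ref{thm:conjugation} on the regular locus, and the slope obstruction of Lemma~\ref{lem:square-slope} on the singular locus.

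\emph{Step 1: lifting $D$ to square pairs.} Write a point $U\in\mathcal G$ as the row space of $[A\ B]$, with $(A,B)\in\cM_{q,q}$ of full row rank $q$. Let $\cM^{\mathrm{fr}}_{q,q}$ denote this open $L_{q,q}$-invariant set. The left factor of $L_{q,q}$ changes the basis of $U$. The right factor acts by $U\mapsto U\diag(h,h)^{-1}$, which is the action appearing in \eqref{eq:Grassmannian-conditions}. Lemma~\ref{lem:frob} (the quotient by the left factor is free) turns $D$ into a distribution $\widetilde D$ on $\cM^{\mathrm{fr}}_{q,q}$. The transition law \eqref{eq:Haar-transition} for $\cL$ makes $\widetilde D$ transform by $\lambda(g)^{p}$ under the left factor. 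The condition $S(f(\cdot h,\cdot h))=\lambda(h)^{-p}S(f)$, combined with the Jacobian of $M\mapsto Mh$ on $W_U$, fixes the right character. I expect the total to have the form $\chi_\kappa$ with $|\kappa|=\nu^{dp}$, possibly up to a harmless shift, and this has to be checked. Next I transport $\mathcal I$. On the chart $A=I$, $U=\text{rowspace}[I\ C]$, a direct computation gives $U^\perp=\text{rowspace}[I\ C^\dagger]$. So $\iota$ becomes $C\mapsto C^\dagger$, the $\GL_q$-action becomes conjugation $C\mapsto h^{-1}Ch$, and $\iota(U h)=\iota(U)(h^\dagger)^{-1}$. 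Globally, $\mathcal I$ corresponds to $\tau$ (up to the left-basis change) composed with multiplication by the normalization $J_U$.

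\emph{Step 2: the regular locus.} On the chart $\{A\in\GL_q(\D)\}$, $D$ becomes a conjugation-invariant distribution $U_0$ on $\Mat_q(\D)$ (after the twisting factors), and $\mathcal I^*$ acts as $C\mapsto C^\dagger$ times a positive locally constant factor coming from $J_U$. Because $J_{\iota(U)}J_U=\mathrm{id}$, this factor squares to $1$ along $\iota$-orbits, so on the chart $\mathcal I^*$ is exactly the pushforward by ${}^\dagger$. Theorem~\ref{thm:conjugation} then gives $\mathcal I^*D=D$ on the chart, and together with $\mathcal I^*D=-D$ this forces $D=0$ there. The substitutions $(x,z)\mapsto(ax+bz,cx+ez)$ with $\begin{pmatrix}a&b\\c&e\end{pmatrix}\in\GL_2(F)$ commute with the diagonal $\GL_q$-action and preserve $b$ up to the scalar $ae-bc$, hence commute with $\iota$. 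The same argument therefore kills $D$ on every chart $\{aA+bB\text{ invertible}\}$, exactly as in the proof of Theorem~\ref{thm:square}. Consequently $\widetilde D$ is supported on the singular pairs inside $\cM^{\mathrm{fr}}_{q,q}$.

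\emph{Step 3: the singular locus.} The singular full-row-rank pairs form an $L_{q,q}$-invariant locally closed subset of $\cM_{q,q}$. By Lemma~\ref{lem:square-slope}, a nonzero $\chi_\kappa$-equivariant distribution there requires $|\kappa|=\nu^{dj}$ with $0\le j\le q-1$. Since $|\kappa|=\nu^{dp}$ and $p\ge q$, this is impossible, so $\widetilde D=0$ and hence $D=0$. If the bookkeeping instead yields an exponent such as $p-q$, I would fall back on Fourier symmetry: $\mathscr F$ corresponds to $\mathcal I$ by Proposition~\ref{prop:Radon-Fourier}, so $\mathscr FS=-S$ is also singularly supported. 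Then I would argue as in Theorem~\ref{thm:square} with the pair $\kappa,\kappa^*$.

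The main obstacle is the character bookkeeping in Step 1. It must combine the transition law \eqref{eq:Haar-transition}, the prescribed $\lambda(h)^{-p}$, and the Frobenius modular factor correctly, and it must verify that on each chart the normalization $J_U$ does not introduce a sign, so that $\mathcal I^*$ is literally the adjoint pushforward. I would first carry out this computation on the standard chart $[I\ C]$, where $J_U$ can be computed explicitly from \eqref{eq:Fourier-subspace}.
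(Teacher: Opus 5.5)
Your proposal is essentially the paper's proof. It kills $D$ on the chart $\operatorname{rowspan}[I\ C]$ via conjugation-invariance and Theorem~\ref{thm:conjugation}, and spreads this over the regular locus by $\SL_2(F)$; your $\GL_2(F)$ also works, since conjugating $\mathcal I$ by it gives a positive constant multiple of an involution, hence $\mathcal I$ itself. It then lifts to full-row-rank square pairs, where the bookkeeping gives exactly $\chi_{\nu^{dp}}$ with no shift, so Lemma~\ref{lem:square-slope} contradicts $p\ge q$ and the Fourier fallback is never needed.

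The one inference to repair is on the chart. The identity $J_{\iota(U)}J_U=\mathrm{id}$ only gives $c(C)c(C^\dagger)=1$ for the chart factor $c$, not $c\equiv1$. The paper instead computes directly, via $\mathcal B((X,XC),(X',X'C^\dagger+V))=\Trd(XV^\dagger)$ and a Jacobian-one substitution, that $(\cR\mathscr Ff)(C)=(\cR f)(C^\dagger)$. In any case, positivity of $c$ would already suffice, since $\dagger$-invariance then yields $D_\Omega((c\circ\dagger+1)\phi)=0$ for all $\phi$.
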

\begin{proof}
	We will divide the proof into $3$ parts.
	
\emph{Step 1:} We first prove the vanishing result on the standard chart.
For $C\in\Mat_q(\D)$, let $U_C=\{(x,xC):x\in \D^q\}$.
The map $C\mapsto U_C$ identifies $\Mat_q(\D)$ with the open chart
$\Omega\subset\mathcal G$ on which the first projection
$U\to \D^q$ is an isomorphism. Using this projection and the fixed
Haar measure $dX$, we regard $D|_\Omega$ as a distribution
$D_\Omega$ on $\Mat_q(\D)$. In these coordinates, we write
$(\cR f)(C)$ for the scalar representative of the Radon section:
\[
 (\cR f)(C)=\int_{\Mat_{p\times q}(\D)}f(X,XC)\,dX.
\]
The skew-Hermitian orthogonal complement is $U_C^\perp=U_{C^\dagger}$.
Moreover,
\[
 \mathcal B\bigl((X,XC),(X',X'C^\dagger+V)\bigr)=\Trd(XV^\dagger).
\]
The change $(X',V)\mapsto(X',X'C^\dagger+V)$ has Jacobian one.
The Fourier inversion in $X,V$, with the fixed self-dual measures, gives
\[
 (\cR\mathscr Ff)(C)=(\cR f)(C^{\dagger}).
\]
Thus $\mathcal I$ acts in these coordinates by the adjoint $X\mapsto X^\dagger$, and
$\mathcal I^*D=-D$ makes $D_\Omega$ adjoint anti-invariant.
For $h\in\GL_q(\D)$, the change of variables $X\mapsto Xh$ gives
\[
 \cR\bigl(f(\,\cdot\,h,\,\cdot\,h)\bigr)(C)
 =\lambda(h)^{-p}(\cR f)(h^{-1}Ch).
\]
Since $S=\cR^*D$ and $\cR^*$ is injective, the factor
$\lambda(h)^{-p}$ cancels against the covariance factor of $S$.
Hence $D_\Omega$ is conjugation-invariant. Theorem~\ref{thm:conjugation}
makes it adjoint-invariant as well, i.e., it is invariant under $X\mapsto X^\dagger$, so $D_\Omega=0$.

\emph{Step 2:} Let us look at the situation on the regular locus.
For $U\in\mathcal G$, choose $A,B\in\Mat_q(\D)$ so that the rows
of $[A,B]$ form a basis of $U$. Call $U$ regular if $sA+tB$ is
invertible for some $(s,t)\in F^2\setminus\{(0,0)\}$. The condition is independent of the
chosen basis. Let
$\gamma=\left(\begin{smallmatrix}a&b\\c&e\end{smallmatrix}\right)
\in\SL_2(F)$ act by
\[
 \gamma\cdot(X,Z)=(aX+bZ,cX+eZ).
\]
Proposition~\ref{prop:Radon-Fourier} gives $\mathscr F S=-S$.
The action of $\gamma$ commutes with simultaneous right multiplication and
preserves $\mathcal B$ and its self-dual measure. By
\eqref{eq:Fourier-covariance}, it also commutes with $\mathscr F$.
Thus $\gamma_*S$
has the same right covariance as $S$ and remains Fourier
anti-invariant. Since $\gamma W_U=W_{\gamma U}$, change of variables gives
\[
 (\cR(f\circ\gamma))(U)(\mu)
 =\int_{W_U}f(\gamma M)\,d\mu(M)
 =(\cR f)(\gamma U)(\gamma_*\mu).
\]
Dualizing yields $\gamma_*S=\cR^*(\gamma_*D)$.
Proposition~\ref{prop:Radon-Fourier}
then shows that $\gamma_*D$ satisfies both hypotheses of the lemma.
Applying Step 1 to $\gamma_*D$ gives $D=0$
on $\gamma^{-1}\Omega$.
These open sets cover the regular locus: a nonzero row
$(s,t)$ can be completed to an element of $\SL_2(F)$, and the
first block of the transformed matrix is $sA+tB$.
Thus $D$ vanishes on the regular locus.

\emph{Step 3:} Now we can descent to the singular locus.
By Step 2, $D$ is supported on the singular locus.
To lift $D$ to a distribution on matrix pairs, set
\[
 \mathcal V=\{(A,B)\in\cM_{q,q}:\rank([A,B])=q\},\qquad
 \pi(A,B)=\operatorname{rowspan}[A,B].
\]
Two points of $\mathcal V$ have the same image under $\pi$ if and
only if they differ by a unique simultaneous left multiplication.
Thus the fiber product is
\[
 \GL_q(\D)\times\mathcal V\xrightarrow{\sim}
 \mathcal V\times_{\mathcal G}\mathcal V,\qquad
 (g,(A,B))\longmapsto\bigl((gA,gB),(A,B)\bigr).
\]
On a chart $\Omega\subset\mathcal G$, choose a matrix $R(U)$ whose
rows form a basis of $U$. The local model for descent is
\[
 \GL_q(\D)\times^{\{1\}}\Omega\xrightarrow{\sim}\pi^{-1}(\Omega),
 \qquad [g,U]\longmapsto gR(U).
\]
We apply Lemma~\ref{lem:frob} locally with character $\lambda^p$ on
$\GL_q(\D)$. The following integration formula makes the gluing
through the Haar line bundle explicit.
For $R=[A,B]\in\pi^{-1}(U)$, put
\[
 \mu_R(f)=\int_{\Mat_{p\times q}(\D)}f(XR)\,dX.
\]
As in \eqref{eq:Haar-transition},
$\mu_{gR}=\lambda(g)^{-p}\mu_R$.
Define
\[
 \mathcal A:\cS(\mathcal V)\longrightarrow
 \cS(\mathcal G,\cL^\vee)
\]
by
\[
 (\mathcal A\phi)(U)(\mu_R)
 =\int_{\GL_q(\D)}\phi(kR)\lambda(k)^p\,d^\times k.
\]
Replacing $R$ by $gR$ multiplies the integral by
$\lambda(g)^{-p}$, so the formula defines a functional on
$\cL_U$ independently of the chosen basis of $U$. Local trivializations of
$\pi$, together with a compactly supported function of weighted
integral one on $\GL_q(\D)$, show that $\mathcal A$ is surjective.
Thus the map $D\mapsto\widetilde D:=D\circ\mathcal A$ is injective.

Changing variables in the defining integrals for $\cR$ and
$\mathcal A$, and using the covariance of $S=\cR^*D$ and
injectivity of $\cR^*$, we obtain
\[
 \widetilde D\bigl(\phi(g^{-1}\,\cdot\,h)\bigr)
 =\bigl(\lambda(g)/\lambda(h)\bigr)^p\widetilde D(\phi)
 \qquad(g,h\in\GL_q(\D)).
\]
Thus $\widetilde D\in\cD(\mathcal V)^{L_{q,q},\chi_{\nu^{dp}}}$.
The singular locus is represented by pairs with the normal rank
$\operatorname{nrank}(A,B)<q$. Since
$\supp\widetilde D\subseteq\pi^{-1}(\supp D)$, the lift is supported
on the singular square pairs in $\mathcal V$. If $\widetilde D$
were nonzero, Lemma~\ref{lem:square-slope}, applied to this closed
subset of $\mathcal V$, would give $\nu^{dp}=\nu^{dj}$ for some
$0\le j\le q-1$. This contradicts $p\ge q$.
Hence $\widetilde D=0$, and surjectivity of $\mathcal A$ gives $D=0$. This finishes the proof.
\end{proof}
Finally we can give the proof of Theorem \ref{thm:rectangular}.
\begin{proof}[Proof of Theorem~\ref{thm:rectangular}]
 By symmetry assume $p\ge q\ge1$.
Since $\tau'_*$ is of order $2$, it suffices to show that an invariant distribution
$T$ with $\tau'_*T=-T$ is zero. We use the partial Fourier transform
and the Radon parametrization to construct a distribution satisfying
Lemma~\ref{lem:Grassmannian-vanishing}.

For $f\in\cS(\cM_{p,q})$, define
\[
 (Pf)(X,Y)=\int_{\Mat_{p\times q}(\D)}
 f(X,Z)\psi(\Trd(YZ))\,dZ,\qquad S(f)=T(Pf).
\]
For the trace pairing, $Y\mapsto hYg^{-1}$ is dual to
$Z\mapsto gZh^{-1}$, and its absolute determinant is
$\lambda(h)^p\lambda(g)^{-q}$.
Thus \eqref{eq:Fourier-covariance}, with the action
$\rho(g,h)f(X,Z)=f(g^{-1}Xh,g^{-1}Zh)$, gives
\[
 P(\rho(g,h)f)(X,Y)
 =\zeta_{p,q}(g,h)(Pf)(g^{-1}Xh,h^{-1}Yg),
\]
where $\zeta_{p,q}(g,h)=\lambda(g)^q\lambda(h)^{-p}$.
Thus $S$ is $\zeta_{p,q}$-equivariant. Moreover, \eqref{eq:reduced-identities} and the Fourier inversion give
$P^{-1}(\tau')^*P=\mathscr F$, so $\mathscr F S=-S$.

Thanks to Proposition~\ref{prop:image-rank}, we have 
\[
 \supp S\subseteq\{(X,Z):\rank([X,Z])\le q\}
\]
when $p>q$; this inclusion is automatic when $p=q$.
Lemma~\ref{lem:Radon} therefore yields a unique
$D\in\cS(\mathcal G,\cL^\vee)^*$ with $S=\cR^*D$.
The right covariance of $S$ gives the first hypothesis of
Lemma~\ref{lem:Grassmannian-vanishing}, while
Proposition~\ref{prop:Radon-Fourier} gives $\mathcal I^*D=-D$.
Then Lemma  \ref{lem:Grassmannian-vanishing}
implies $D=0$, hence $S=0$. Since $P$ is invertible, $T=0$. This finishes the proof.
\end{proof}
\begin{remark}
    When $d=1$, it is expected that every $\GL_p(F)\times\GL_{q-1}(F)$-invariant distribution on $\Mat_{p\times q}\oplus\Mat_{q\times p}(F)$ is also fixed by $\tau':(X,Y)\mapsto(Y^t,X^t)$, where $\GL_{q-1}$ is regarded a subgroup of the mirabolic subgroup of $\GL_q(F)$.
\end{remark}

\section{Closed double orbits and normal representations}\label{sec:group-descent}
The rest of this paper is devoted to the proof of Theorem \ref{thm:group}.
Regard $G$ and $H$ as algebraic groups over $F$. Interchanging the blocks
allows us to assume $p\geq q$; put $\Delta=p-q$. Let $L=H\times H$ act on $G$ by left and right multiplication,
and set
\[
\chi(h_l,h_r)=\eta(h_l)\eta(h_r)^{-1}.
\]
The adjoint $s(g)=g^{\dagger}$ normalizes this action by
\begin{equation}\label{eq:s-normalization}
s(h_lgh_r^{-1})=(h_r^{\dagger})^{-1}s(g)h_l^{\dagger}.
\end{equation}
Since $\eta((h^{\dagger})^{-1})=\eta(h)^{-1}$, $\chi$ is fixed by this automorphism
of $L$. Extend $\chi$ to $\widetilde L=L\rtimes\langle s\rangle$
by setting $\chi(s)=-1$, and retain the notation $\chi$.

We apply Theorem~\ref{thm:HC-descent} with
$K=\widetilde L$, $X=G$, and character $\chi$.
The normal space to the $\widetilde L$-orbit at $g_0$ is
$N_{g_0}=T_{g_0}G/T_{g_0}(L\cdot g_0)$,
with the induced action of $\widetilde L_{g_0}$.
It remains to prove
$\cD(N_{g_0})^{\widetilde L_{g_0},\chi}=0$
for every closed orbit.

\subsection{Rational closed-orbit representatives}
Let $V=V_+\oplus V_-$, with dimensions $p,q$, and put
$\varepsilon=\diag(I_p,-I_q)$. For semisimple $C\in\Mat_k(\D)$ with
$C(I-C)$ invertible, write
\begin{equation}\label{eq:gC}
g_C=\begin{pmatrix}I_k&C\\ I_k&I_k\end{pmatrix},\qquad
A_C=\{Z\in\Mat_k(\D):ZC=CZ\}.
\end{equation}
A zero-dimensional factor in all formulas is omitted.

\begin{lemma}\cite[Proposition 4.1]{JR} \label{lem:closed-orbits}
Every $L(F)$-orbit in $G(F)$ whose algebraic orbit is closed has a representative
\begin{equation}\label{eq:g0}
g_0=\left(
\begin{array}{ccc|ccc}
I_{a+\Delta}&0&0&0&0&0\\
0&0&0&0&I_b&0\\
0&0&I_k&0&0&C\\ \hline
0&0&0&I_a&0&0\\
0&I_b&0&0&0&0\\
0&0&I_k&0&0&I_k
\end{array}
\right),\qquad a+b+k=q.
\end{equation}
The block sizes are $(a+\Delta,b,k)$ in $V_+$ and $(a,b,k)$ in $V_-$;
the dividing lines separate $V_+$ from $V_-$.
We call the subspaces formed by block positions $(1,4)$, $(2,5)$,
and $(3,6)$ the \emph{identity summand}, \emph{exchange summand},
and \emph{generic summand}, respectively. They are $g_0$-invariant,
and the restrictions of $g_0$ to them are, respectively,
\[
 I_{2a+\Delta},\qquad
 \begin{pmatrix}0&I_b\\I_b&0\end{pmatrix},\qquad
 g_C=\begin{pmatrix}I_k&C\\I_k&I_k\end{pmatrix}.
\]
\end{lemma}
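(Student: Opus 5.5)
The statement concerns closed $L=H\times H$ double cosets $H g H$ in $G=\GL_{p+q}(\D)$. The plan is to follow Jacquet--Rallis and work through the involution $\theta=\Ad(\varepsilon)$. A double coset $HgH$ is closed precisely when the symmetrized element $x=g\,\theta(g)^{-1}=g\varepsilon g^{-1}\varepsilon$ is semisimple. More precisely, the map $g\mapsto g\varepsilon g^{-1}$ identifies $G/H$ with the symmetric space $\{y\in G: y^2=1,\ y\sim\varepsilon\}$. Under this identification, $H\backslash G/H$ becomes the set of $H$-conjugacy orbits on that variety, and closed orbits correspond to semisimple elements by the standard Richardson criterion. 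So the first step is to classify semisimple elements $y=g\varepsilon g^{-1}$ up to $H$-conjugation.

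Write $y=\begin{pmatrix}\alpha&\beta\\ \gamma&\delta\end{pmatrix}$ in the decomposition $V_+\oplus V_-$. The condition $y^2=1$ gives $\beta\gamma=1-\alpha^2$ and $\gamma\beta=1-\delta^2$. The $H$-action is $(\beta,\gamma)\mapsto(h_1\beta h_2^{-1},h_2\gamma h_1^{-1})$, which is exactly the reciprocal action \eqref{eq:intro-recip}. Semisimplicity of $y$ is equivalent to semisimplicity of $\gamma\beta\in\Mat_q(\D)$ together with the quiver-theoretic closedness condition: $\ker(\beta\gamma)$ splits and $\beta,\gamma$ have no nilpotent Jordan chains on the zero generalized eigenspace.

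The second step is to decompose $V_-$ into generalized eigenspaces of $\gamma\beta$ with eigenvalues $0$, $1$, and "other". This decomposition is carried out over the semisimple algebra $F[\gamma\beta]$, using Skolem--Noether and the $F$-rationality argument of Lemma~\ref{lem:rational-similarity} to pass from geometric to rational conjugacy, exactly as in Proposition~\ref{prop:orbitwise-localization}. The three pieces behave as follows.
\begin{itemize}
\item On the eigenvalue-$0$ part, closedness forces $\beta=\gamma=0$. This part produces the identity summand, and $V_+$ contributes the extra $\Delta$ dimensions as the complement.
\item On the eigenvalue-$1$ part, closedness forces $\beta\gamma$ and $\gamma\beta$ to be the identity there. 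Hence $\beta$ and $\gamma$ are inverse isomorphisms between $b$-dimensional pieces, which we normalize to identities. This gives the exchange summand $\begin{pmatrix}0&I\\I&0\end{pmatrix}$.
\item On the remaining part, $\gamma\beta=:C'$ is semisimple with $C'$ and $1-C'$ invertible. Here we normalize $\gamma=I_k$ by $H$ and $\beta=C$, with $C$ determined up to conjugacy.
\end{itemize}

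The final step is to choose an explicit $g$ realizing each normalized $y$. We will simply verify that the displayed $g_0$ has $g_0\varepsilon g_0^{-1}\varepsilon$ equal to the normalized form, which is a direct block computation. In particular, $g_C\,\diag(I,-I)\,g_C^{-1}$ has off-diagonal data determined by $C$ and requires $C(I-C)$ invertible, because $\det g_C\sim\Nrd(I-C)$.

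The main obstacle is the rationality and closedness step over $\D$: showing that every closed geometric orbit meeting $G(F)$ contains such a representative, and that the eigen-decomposition can be realized by $H(F)$. To handle it, I would use the separability/idempotent-lifting argument from the proof of Theorem~\ref{thm:kronecker} to split off the eigenspaces $\D$-rationally. The alternative is to cite \cite[Proposition 4.1]{JR} for the split case and base-change that statement.
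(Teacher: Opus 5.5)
\textbf{Verdict: genuine gap.} Your framework is the paper's: pass to $y=g\varepsilon g^{-1}$, use that closedness makes $r=y\varepsilon$ semisimple, and classify $y$ up to $H$-conjugacy. (Note that $y$ is always semisimple, being an involution, so ``semisimple elements $y$'' should read ``elements with $r$ semisimple''.) However, the decomposition you build on it does not produce the stated summands.

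The cause is that the $H$-orbit of $y$ is not determined by the reciprocal-action orbit of $(\beta,\gamma)$: the diagonal blocks $\alpha,\delta$ carry essential information. We have $r+r^{-1}=\diag(2\alpha,-2\delta)$ and $\gamma\beta=1-\delta^2$. So semisimplicity of $r$ shows that $\ker(\gamma\beta)$ is the $V_-$-part of $\ker(r-1)\oplus\ker(r+1)$, on which $y=\varepsilon$ and $y=-\varepsilon$ respectively.
\begin{itemize}
\item Your eigenvalue-$0$ part therefore contains \emph{both} the identity and the exchange summand. On the exchange block of the displayed $g_0$ one has $g_0\varepsilon g_0^{-1}=-\varepsilon$, hence $\beta=\gamma=0$ there.
\item Conversely, $\gamma\beta=1$ forces $\delta^2=0$, hence $\alpha=\delta=0$ and $r^2=-1$ on that piece. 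That piece is generic: $y=\left(\begin{smallmatrix}0&I\\I&0\end{smallmatrix}\right)$ equals $g_{-I}\diag(I,-I)g_{-I}^{-1}$, i.e.\ $C=-I$.
\end{itemize}
You have matched the shape of $y$ with the shape of $g$. The recipe already fails for $p=q=1$. The element $g=\left(\begin{smallmatrix}0&1\\1&0\end{smallmatrix}\right)$ has $\gamma\beta=0$ and would be assigned the identity, although $HgH$ is the set of invertible antidiagonal matrices. And $g_{-1}$ has $\gamma\beta=1$ but is not antidiagonal.

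The same loss of information affects your generic part. After normalizing $\gamma=I$ one has $\beta=1-\alpha^2$, which recovers $\alpha$ only up to sign. Moreover, $g_C$ itself has $\gamma=2(I-C)^{-1}$ and $\beta=-2C(I-C)^{-1}$, not $(I,C)$. So your closing check of $g_0\varepsilon g_0^{-1}\varepsilon$ against the normalized forms fails on both the exchange and the generic blocks.

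\textbf{The repair} is to decompose by $r$ itself, as the paper does: $V=\ker(r-1)\oplus\ker(r+1)\oplus V_*$. Each summand is stable under $\varepsilon$ and $y$, because both conjugate $r$ to $r^{-1}$.
\begin{itemize}
\item On the first two summands $y=\pm\varepsilon$, giving the identity and exchange summands. The equality of the two $\varepsilon$-eigenspace dimensions on $\ker(r+1)$ follows from $y\sim\varepsilon$ together with the balance of $V_*$.
\item On $V_*$ neither $\alpha$ nor $\delta$ has eigenvalue $\pm1$, so $\beta\gamma=1-\alpha^2$ and $\gamma\beta=1-\delta^2$ make $\beta,\gamma$ isomorphisms. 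Normalizing $\gamma=I$ gives $y=\left(\begin{smallmatrix}A&I-A^2\\I&-A\end{smallmatrix}\right)$. With $C=(A-I)(A+I)^{-1}$, conjugation by $\diag(I,A+I)$ turns this into $g_C\diag(I,-I)g_C^{-1}$.
\end{itemize}
Using $A=\alpha$ as the parameter, rather than $\gamma\beta$, is exactly the information your version loses.

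Finally, the rationality step you flag as the main obstacle is not one. Since $r$ is a $\D$-linear operator defined over $F$, its primary components are automatically right $\D$-subspaces defined over $F$, and no idempotent lifting is needed. By contrast, base change from \cite{JR} alone would only give representatives over a splitting field, not in $\GL_{p+q}(\D)$.
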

\begin{proof}
Put $\epsilon'=g\varepsilon g^{-1}$ and
$r=\epsilon'\varepsilon$. Closedness implies that $r$ is
semisimple by \cite[Proposition~7.2.1(i)]{AG}.
The relation $\varepsilon r\varepsilon=r^{-1}$ gives a
$\varepsilon$- and $\epsilon'$-stable decomposition of the right
$\D$-space $V$ into the $1$-eigenspace of $r$, its
$(-1)$-eigenspace, and the complementary summand $V_*$. This
is the rational primary decomposition, so all summands are
defined over $F$ and are right $\D$-subspaces.
On the first two summands one has, respectively,
$\epsilon'=\varepsilon$ and $\epsilon'=-\varepsilon$.

On $V_*$, decompose according to $\varepsilon$ and write
\[
 \epsilon'=\begin{pmatrix}A&B\\D_0&E\end{pmatrix}.
\]
Here $r+r^{-1}=\diag(2A,-2E)$ is semisimple and has no eigenvalues
$2$ or $-2$. Hence $I-A^2$ and $I-E^2$ are invertible.
From $(\epsilon')^2=I$ we get
$BD_0=I-A^2$ and $D_0B=I-E^2$, so $B$ and $D_0$ are
isomorphisms. In particular, the two $\varepsilon$-eigenspaces
in $V_*$ have the same $\D$-dimension $k$.
After a block-diagonal change of basis making $D_0=I_k$, the
same equations give
\[
 \epsilon'=\begin{pmatrix}A&I_k-A^2\\ I_k&-A\end{pmatrix},
 \qquad A\text{ semisimple},\quad A\pm I_k\text{ invertible}.
\]
Put $C=(A-I_k)(A+I_k)^{-1}$. Then $C$ is semisimple and
$C(I_k-C)$ is invertible. Conjugating the last matrix by
$\diag(I_k,A+I_k)$ gives
\[
 \begin{pmatrix}A&I_k-A\\A+I_k&-A\end{pmatrix}
 =g_C\diag(I_k,-I_k)g_C^{-1}.
\]
Thus this summand is represented by $g_C$.

Since $\epsilon'$ is conjugate to $\varepsilon$ and the generic
summand is balanced, the two $\varepsilon$-eigenspaces on the
$(-1)$-eigenspace of $r$ have the same dimension $b$. This gives
the exchange summand. The remaining dimensions are $a+\Delta,a$,
giving the identity summand, and $a+b+k=q$.
Finally, $g\mapsto g\varepsilon g^{-1}$ identifies $G/H$ with
the conjugacy class of $\varepsilon$. The preceding
$H$-conjugations therefore give the required $H\times H$
representative over $F$.
\end{proof}

Since $L$ has finite index in $\widetilde L$,
Lemma~\ref{lem:closed-orbits} implies that every closed
$\widetilde L$-orbit contains a point of the form \eqref{eq:g0}.

\subsection{The stabilizer, its character, and the full normal space}
For the representative $g_0$ in \eqref{eq:g0}, let
\[
 L_{g_0}=\{(h_l,h_r)\in L:h_lg_0h_r^{-1}=g_0\}
\]
be its stabilizer. Write $\fh=\Lie(H)$. Under
$T_{g_0}G\simeq\Mat_{p+q}(\D)$, the normal space is
\[
 N_{g_0}\simeq\Mat_{p+q}(\D)/(\fh g_0+g_0\fh).
\]
The stabilizer acts on this quotient by
\[
 (h_l,h_r)\cdot[Z]=[h_lZh_r^{-1}].
\]
We call $N_{g_0}$ with this action the normal representation at $g_0$.

\begin{lemma}\cite[Theorem 7.7.3]{AG}\label{lem:group-normal}
The stabilizer and its normal representation admit the identifications
\begin{align}
L_{g_0}&\simeq
 (\GL_{a+\Delta}(\D)\times\GL_a(\D))\times(\GL_b(\D)\times\GL_b(\D))\times A_C^\times,
 \label{eq:group-stabilizer}\\
N_{g_0}&\simeq
 \bigl(\Mat_{(a+\Delta)\times a}(\D)\oplus\Mat_{a\times(a+\Delta)}(\D)\bigr)
 \oplus\bigl(\Mat_b(\D)\oplus\Mat_b(\D)\bigr)\oplus A_C.
 \label{eq:group-normal}
\end{align}
Under these identifications, each of the first two stabilizer
factors acts on its corresponding normal summand by
\[
 (u,v)\cdot(X,Y)=(uXv^{-1},vYu^{-1}),
\]
while $A_C^\times$ acts on $A_C$ by $Z\mapsto cZc^{-1}$.
Each stabilizer factor acts trivially on the other summands.
Writing $\gamma=\alpha\beta^{-1}$ and denoting the middle stabilizer
factor by $(u_1,v_1)$, one has
$\chi|_{L_{g_0}}=\gamma(\Nrd u_1/\Nrd v_1)$.
In particular, the unbalanced normal factor has trivial character.
\end{lemma}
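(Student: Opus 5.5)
The plan is to compute everything directly from the block form \eqref{eq:g0}, treating the three summands (identity, exchange, generic) separately and then showing that the cross terms between different summands drop out of the normal space. Write $V_+=V_+^{1}\oplus V_+^{2}\oplus V_+^{3}$ and $V_-=V_-^{1}\oplus V_-^{2}\oplus V_-^{3}$ with dimensions $(a+\Delta,b,k)$ and $(a,b,k)$. The condition $h_lg_0=g_0h_r$ with $h_l,h_r\in H$ is then a system of block equations.

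\textbf{Step 1: the stabilizer.} First I would check that any $(h_l,h_r)\in L_{g_0}$ preserves the decomposition of $V$ into the three summands. The cleanest route is the one from Lemma~\ref{lem:closed-orbits}. The stabilizer commutes with $\varepsilon$ and with $\epsilon'=g_0\varepsilon g_0^{-1}$, at least in the sense that $h_l$ conjugates $\epsilon'$ appropriately. It therefore preserves the primary decomposition of $r=\epsilon'\varepsilon$, and the three summands are exactly the eigenspaces $r=1$, $r=-1$ and the complementary part.

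On each summand I would then solve the equations.
\begin{itemize}
\item On the identity summand, $g_0=I$, so $h_l=h_r$ restricted there lies in $\GL_{a+\Delta}(\D)\times\GL_a(\D)$.
\item On the exchange summand, $h_l=(u_1,v_1)$ forces $h_r=(v_1,u_1)$, which gives a free factor $\GL_b\times\GL_b$.
\item On the generic summand, write $h_l=(x,y)$ and $h_r=(x',y')$. The equation $h_lg_C=g_Ch_r$ gives $x=x'$, $y=x'$ and $xC=Cy'$, $y=y'$. Hence $x=y=x'=y'=c$ with $cC=Cc$, so the factor is $A_C^\times$.
\end{itemize}
This yields \eqref{eq:group-stabilizer}.

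\textbf{Step 2: the normal space.} Next I would compute $\fh g_0+g_0\fh\subset\Mat_{p+q}(\D)$, working block by block relative to the summand decomposition. Left-multiplying by $g_0^{-1}$ identifies $N_{g_0}$ with $\Mat_{p+q}(\D)/(g_0^{-1}\fh g_0+\fh)$.
\begin{itemize}
\item On the identity$\times$identity block, $\fh+\fh$ is the block-diagonal part, and the quotient is $\Mat_{(a+\Delta)\times a}\oplus\Mat_{a\times(a+\Delta)}$.
\item On the exchange block, $g_0^{-1}\fh g_0$ is the off-diagonal part, so the sum is everything. But the action twists the identification, giving a copy $\Mat_b\oplus\Mat_b$ of the diagonal-complement. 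I expect this to need care: the quotient should be the $\varepsilon$-off-diagonal blocks modulo nothing from $\fh$ on one side.
\item On the generic block, the quotient should be $\Mat_{2k}/(\fh+g_C^{-1}\fh g_C)$. A dimension count using the stabilizer Lie algebra $A_C$ gives dimension $\dim A_C$. Indeed, $\dim(\fh+g^{-1}\fh g)=2\dim\fh-\dim(\fh\cap g^{-1}\fh g)$ and $\dim(\fh\cap g^{-1}\fh g)=\dim A_C$, while $\dim\Mat_{2k}=2\dim\fh$. An explicit complement is $\{\diag(0,Z):Z\in A_C\}$ transported suitably.
\end{itemize}

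For the cross blocks between different summands, I would show they lie in $\fh+g_0^{-1}\fh g_0$. This reduces to the invertibility of the operator $Z\mapsto Z-r_iZr_j^{-1}$ between different primary components, since $r$ has disjoint spectra on distinct summands. This cross-term vanishing is the main obstacle, and I would handle it via the eigenvalue-separation of $r$ exactly as in \cite{AG}.

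\textbf{Step 3: the actions and the character.} The actions follow by restricting $[Z]\mapsto[h_lZh_r^{-1}]$ to the chosen complements. On the identity summand this gives $(uXv^{-1},vYu^{-1})$ after noting $h_l=h_r=\diag(u,v)$ on the off-diagonal blocks. On the exchange summand, after left multiplication by $g_0^{-1}$, the pair $(u_1,v_1)$ acts by the same formula. On the generic summand it is conjugation by $c$.

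For the character, use $\chi(h_l,h_r)=\eta(h_l)/\eta(h_r)$ with $\eta=\alpha\circ\Nrd\boxt\beta\circ\Nrd$.
\begin{itemize}
\item On the identity factor, $h_l=h_r$, so the contribution is $1$.
\item On the generic factor, $h_l=h_r=(c,c)$, so the contribution is $1$.
\item On the exchange factor, $h_l=(u_1,v_1)$ and $h_r=(v_1,u_1)$. This gives $\alpha(\Nrd u_1)\beta(\Nrd v_1)\alpha(\Nrd v_1)^{-1}\beta(\Nrd u_1)^{-1}=\gamma(\Nrd u_1/\Nrd v_1)$.
\end{itemize}
In particular, the unbalanced factor has trivial character.
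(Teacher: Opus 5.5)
Your stabilizer computation (through the primary decomposition of $r$ rather than the paper's direct block comparison) and your character computation are correct. Step~2, however, has a genuine gap on the generic summand. The count $\dim_F(\fh+g_C^{-1}\fh g_C)=4k^2d^2-\dim_F A_C$ gives only the dimension of that normal summand. The lemma asserts more: an $A_C^\times$-equivariant identification with $A_C$ under conjugation. Your proposed complement does not supply this. In the coordinates you chose, $\Mat_{2k}(\D)/(\fh+g_C^{-1}\fh g_C)$, the matrices $\diag(0,Z)$ lie in $\fh$, so they are zero in the quotient.

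What is needed is an explicit equivariant description of the quotient.
\begin{enumerate}
\item Modulo $\fh$, only the off-diagonal blocks survive.
\item The off-diagonal part of $g_C^{-1}\diag(U,V)g_C$ is $\bigl((I_k-C)^{-1}(UC-CV),\,(I_k-C)^{-1}(V-U)\bigr)$.
\item Hence $(M_{12},M_{21})\mapsto M_{12}+CM_{21}$ identifies the quotient with $\Mat_k(\D)/[\Mat_k(\D),C]$, compatibly with conjugation by $c\in A_C^\times$. This is the paper's map \eqref{eq:generic-quotient} written in your coordinates.
\end{enumerate}
To reach $A_C$ you then need the splitting $\Mat_k(\D)=A_C\oplus[\Mat_k(\D),C]$. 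This is exactly where semisimplicity of $C$ enters, and your outline never uses it. Without it, $A_C$ can meet $[\Mat_k(\D),C]$ nontrivially.

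If by ``transported suitably'' you mean taking $\diag(0,Z)$ in the unmultiplied quotient $\Mat_{2k}(\D)/(\fh g_C+g_C\fh)$, it is a complement there. But that holds only because its image $-CZ$ under \eqref{eq:generic-quotient} runs over $A_C$, so it rests on the same splitting.

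The exchange summand also contains an error. Conjugation by $\left(\begin{smallmatrix}0&I_b\\I_b&0\end{smallmatrix}\right)$ sends $\diag(U,V)$ to $\diag(V,U)$. So $g_0^{-1}\fh g_0=\fh$ there, and the sum is $\fh$, not everything; ``everything'' would make this normal summand vanish. It is $\fh g_0$ and $g_0\fh$ that are off-diagonal, which is why the paper's representatives are $\diag(X_1,Y_1)$. In your coordinates the quotient is the off-diagonal pair $(X,Y)$, acted on by conjugation by $h_r=\diag(v_1,u_1)$. This gives $(v_1Xu_1^{-1},u_1Yv_1^{-1})$, which is the stated formula after relabelling $(X_1,Y_1)=(Y,X)$. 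So your Step~3 is right once this is fixed.

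Your treatment of the cross blocks is a valid alternative to the paper's argument, which uses injectivity of the three complements plus $\dim_F N_{g_0}=\dim G-\dim L+\dim L_{g_0}$. For $i\neq j$, the $(i,j)$ block of $\fh+g_0^{-1}\fh g_0$ is the sum of the fixed spaces of two involutions. Their product is $M\mapsto r''_iM(r''_j)^{-1}$, where $r''=g_0^{-1}\varepsilon g_0\varepsilon$ is conjugate to $r^{-1}$ on each summand. The spectra are $\{1\}$, $\{-1\}$, and free of $\pm1$ on the generic summand because $C(I_k-C)$ is invertible. Their disjointness forces the two fixed spaces to be complementary.
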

\begin{proof}
 We begin to compute the stabilizer and its character.
Write $h_l=\diag(P,Q)$ and $h_r=\diag(R,S)$, where
$P,R\in\GL_p(\D)$ and $Q,S\in\GL_q(\D)$.
Decompose $P=(P_{ij})$ and $R=(R_{ij})$ into $3\times3$ blocks
of sizes $(a+\Delta,b,k)$, and $Q=(Q_{ij})$ and $S=(S_{ij})$
into blocks of sizes $(a,b,k)$.
Using \eqref{eq:g0}, the four blocks of $h_lg_0=g_0h_r$
relative to $V_+\oplus V_-$ give
\begin{gather*}
\begin{pmatrix}
P_{11}&0&P_{13}\\ P_{21}&0&P_{23}\\ P_{31}&0&P_{33}
\end{pmatrix}
=
\begin{pmatrix}
R_{11}&R_{12}&R_{13}\\ 0&0&0\\ R_{31}&R_{32}&R_{33}
\end{pmatrix},\qquad
\begin{pmatrix}
0&P_{12}&P_{13}C\\ 0&P_{22}&P_{23}C\\ 0&P_{32}&P_{33}C
\end{pmatrix}
=
\begin{pmatrix}
0&0&0\\ S_{21}&S_{22}&S_{23}\\ CS_{31}&CS_{32}&CS_{33}
\end{pmatrix},\\[8pt]
\begin{pmatrix}
0&Q_{12}&Q_{13}\\ 0&Q_{22}&Q_{23}\\ 0&Q_{32}&Q_{33}
\end{pmatrix}
=
\begin{pmatrix}
0&0&0\\ R_{21}&R_{22}&R_{23}\\ R_{31}&R_{32}&R_{33}
\end{pmatrix},\qquad
\begin{pmatrix}
Q_{11}&0&Q_{13}\\ Q_{21}&0&Q_{23}\\ Q_{31}&0&Q_{33}
\end{pmatrix}
=
\begin{pmatrix}
S_{11}&S_{12}&S_{13}\\ 0&0&0\\ S_{31}&S_{32}&S_{33}
\end{pmatrix}.
\end{gather*}
Since $C$ is invertible, comparison of entries forces all
off-diagonal blocks of $P,Q,R,S$ to vanish. The diagonal entries satisfy
\[
\begin{gathered}
P_{11}=R_{11},\qquad Q_{11}=S_{11},\qquad
P_{22}=S_{22},\qquad Q_{22}=R_{22},\\
P_{33}=Q_{33}=R_{33}=S_{33},\qquad P_{33}C=CP_{33}.
\end{gathered}
\]
Consequently,
\begin{align*}
h_l&=\diag(u_0,u_1,c,v_0,v_1,c),\\
h_r&=\diag(u_0,v_1,c,v_0,u_1,c),
\end{align*}
where $u_0\in\GL_{a+\Delta}(\D)$, $v_0\in\GL_a(\D)$,
$u_1,v_1\in\GL_b(\D)$, and $c\in\GL_k(\D)$ satisfies $cC=Cc$,
that is, $c\in A_C^\times$.
Conversely, these matrices satisfy $h_lg_0=g_0h_r$.
Their reduced norms give
\[
 \chi(h_l,h_r)
 =\alpha\!\left(\frac{\Nrd u_1}{\Nrd v_1}\right)
  \beta\!\left(\frac{\Nrd v_1}{\Nrd u_1}\right)
 =\gamma\!\left(\frac{\Nrd u_1}{\Nrd v_1}\right).
\]

Secondly we consider those three normal factors.
The tangent to the double orbit is $\fh g_0+g_0\fh$.
We compute on each pair of block positions separately.

On the identity summand (positions $1,4$), an orbit tangent is
\[
 \begin{pmatrix}U&0\\0&V\end{pmatrix}I_{2a+\Delta}
 -I_{2a+\Delta}\begin{pmatrix}S&0\\0&T\end{pmatrix}
 =\begin{pmatrix}U-S&0\\0&V-T\end{pmatrix},
\]
where $U,S\in\Mat_{a+\Delta}(\D)$ and $V,T\in\Mat_a(\D)$.
The quotient is therefore represented uniquely by
$\left(\begin{smallmatrix}0&X_0\\Y_0&0\end{smallmatrix}\right)$,
and the stabilizer acts by
\[
 \begin{pmatrix}u_0&0\\0&v_0\end{pmatrix}
 \begin{pmatrix}0&X_0\\Y_0&0\end{pmatrix}
 \begin{pmatrix}u_0^{-1}&0\\0&v_0^{-1}\end{pmatrix}
 =\begin{pmatrix}0&u_0X_0v_0^{-1}\\v_0Y_0u_0^{-1}&0\end{pmatrix}.
\]

On the exchange summand (positions $2,5$), the tangent calculation is
\[
 \begin{pmatrix}U&0\\0&V\end{pmatrix}
 \begin{pmatrix}0&I_b\\I_b&0\end{pmatrix}
 -\begin{pmatrix}0&I_b\\I_b&0\end{pmatrix}
 \begin{pmatrix}S&0\\0&T\end{pmatrix}
 =\begin{pmatrix}0&U-T\\V-S&0\end{pmatrix},
\]
with $U,V,S,T\in\Mat_b(\D)$.
Thus the quotient is represented uniquely by $\diag(X_1,Y_1)$,
with action
\[
 \begin{pmatrix}u_1&0\\0&v_1\end{pmatrix}
 \begin{pmatrix}X_1&0\\0&Y_1\end{pmatrix}
 \begin{pmatrix}v_1^{-1}&0\\0&u_1^{-1}\end{pmatrix}
 =\begin{pmatrix}u_1X_1v_1^{-1}&0\\0&v_1Y_1u_1^{-1}\end{pmatrix}.
\]

On the generic summand (positions $3,6$), we compute the quotient
of $\Mat_{2k}(\D)$ by the tangent to the orbit of
$g_C=\left(\begin{smallmatrix}I_k&C\\I_k&I_k\end{smallmatrix}\right)$.
Two matrices represent the same normal class precisely when their
difference is an orbit tangent. Such tangents are the matrices
\[
\begin{pmatrix}U&0\\0&V\end{pmatrix}g_C-
 g_C\begin{pmatrix}S&0\\0&T\end{pmatrix}
 =\begin{pmatrix}U-S&UC-CT\\V-S&V-T\end{pmatrix},
\]
where $U,V,S,T$ vary independently in $\Mat_k(\D)$.

First, we eliminate three of the four blocks.
Let $M=\left(\begin{smallmatrix}M_{11}&M_{12}\\M_{21}&M_{22}\end{smallmatrix}\right)$
be arbitrary. Taking $U=M_{11}$, $S=0$, $V=M_{21}$, and
$T=M_{21}-M_{22}$ in the preceding formula gives the orbit tangent
\[
 \begin{pmatrix}
 M_{11}&M_{11}C-CM_{21}+CM_{22}\\
 M_{21}&M_{22}
 \end{pmatrix}.
\]
Subtracting this matrix from $M$ shows that
\[
 M\equiv
 \begin{pmatrix}
 0&M_{12}-M_{11}C+CM_{21}-CM_{22}\\
 0&0
 \end{pmatrix}
 \pmod{\text{orbit tangent space}}.
\]
Thus every normal class has a representative supported in the
upper-right block, but this representative need not yet be unique.

We next determine when two such representatives define the same class.
Their difference is an orbit tangent exactly when
\[
 \begin{pmatrix}0&Z-Z'\\0&0\end{pmatrix}
 =\begin{pmatrix}U-S&UC-CT\\V-S&V-T\end{pmatrix}
\]
for some $U,V,S,T$. The three zero blocks force
$U=S$, $V=S$, and $V=T$, hence $U=V=S=T$.
The remaining equation is therefore $Z-Z'=UC-CU$.
Conversely, every such commutator occurs by taking $U=V=S=T$.
Consequently, the normal quotient is identified with
$\Mat_k(\D)/[\Mat_k(\D),C]$ by the map
\begin{equation}\label{eq:generic-quotient}
M\longmapsto M_{12}-M_{11}C+CM_{21}-CM_{22}
 \pmod{[\Mat_k(\D),C]}.
\end{equation}

Finally, semisimplicity of $C$ identifies this quotient with $A_C$.
Namely,
\[
 \Mat_k(\D)=A_C\oplus[\Mat_k(\D),C].
\]
Every normal class is therefore represented uniquely by
$\left(\begin{smallmatrix}0&Z\\0&0\end{smallmatrix}\right)$ with
$Z\in A_C$. Since $c\in A_C^\times$, conjugation by $c$ preserves
$A_C$, and the stabilizer action on these representatives is
\[
 \begin{pmatrix}c&0\\0&c\end{pmatrix}
 \begin{pmatrix}0&Z\\0&0\end{pmatrix}
 \begin{pmatrix}c^{-1}&0\\0&c^{-1}\end{pmatrix}
 =\begin{pmatrix}0&cZc^{-1}\\0&0\end{pmatrix}.
\]
For the full normal space,
it remains to check that these three complements exhaust the normal space.
Projecting an orbit tangent onto any of the three summands gives
an orbit tangent for the corresponding restricted action.
Hence the three complements above map injectively into $N_{g_0}$.
The stabilizer calculation gives
\[
 \dim_F N_{g_0}=\dim G-\dim L+\dim L_{g_0}
 =d^2\bigl(2a(a+\Delta)+2b^2\bigr)+\dim_F A_C.
\]
This is the sum of their dimensions, so they give the full normal
space \eqref{eq:group-normal}.
\end{proof}

\subsection{The stabilizing adjoint}
Lemma~\ref{lem:rational-similarity} gives $R\in\GL_k(\D)$ such that
\[
 RC=C^\dagger R.
\]
The algebra $F[C]$ is a product of finite extensions $E_i/F$.
The centralizer theorem, applied on each primary summand, gives
\begin{equation}\label{eq:centralizer-factors}
 A_C\simeq\prod_i\Mat_{m_i}(D_i),
\end{equation}
where $D_i$ is Brauer-equivalent to $\D\otimes_FE_i$ and hence
is either $E_i$ or a quaternion division algebra over $E_i$.
The map
\begin{equation}\label{eq:centralizer-adjoint}
 \jmath_C:A_C\longrightarrow A_C,\qquad
 \jmath_C(Z)=R^{-1}Z^\dagger R
\end{equation}
is an $F$-linear anti-automorphism fixing $F[C]=Z(A_C)$ pointwise.
Thus Theorem~\ref{thm:conjugation} applies to it.

Put $h=\diag(I_{a+\Delta},I_b,R,I_a,I_b,RC)\in H$.
On the generic summand,
\[
 \begin{pmatrix}R&0\\0&RC\end{pmatrix}
 \begin{pmatrix}I_k&C\\I_k&I_k\end{pmatrix}
 \begin{pmatrix}R^{-1}&0\\0&C^{-1}R^{-1}\end{pmatrix}
 =\begin{pmatrix}I_k&I_k\\C^\dagger&I_k\end{pmatrix}=g_C^\dagger.
\]
Consequently $g_0^\dagger=hg_0h^{-1}$, and
$s_0=(h^{-1},h^{-1})s\in\widetilde L_{g_0}$ satisfies
$\chi(s_0)=-1$. Its square is the element
$(h^{-1}h^\dagger,h^{-1}h^\dagger)\in L_{g_0}$; the argument
uses $s_0$ as a stabilizing symmetry, not necessarily as an involution.

\begin{lemma}\label{lem:normal-involution}
In \eqref{eq:group-normal}, the normal action of $s_0$ is
\begin{equation}\label{eq:normal-involution}
(X_0,Y_0;X_1,Y_1;Z)
\longmapsto
(Y_0^{\dagger},X_0^{\dagger};X_1^{\dagger},Y_1^{\dagger};R^{-1}Z^{\dagger}R).
\end{equation}
\end{lemma}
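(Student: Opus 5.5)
The plan is to compute the differential of $s_0$ at $g_0$ directly and then push each normal representative of Lemma~\ref{lem:group-normal} through it.

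\emph{The formula for the action.} Recall $s_0=(h^{-1},h^{-1})s$. It acts on $G$ by $g\mapsto h^{-1}g^\dagger h$. This map fixes $g_0$ because $g_0^\dagger=hg_0h^{-1}$. The map $g\mapsto g^\dagger$ is $F$-linear on $\Mat_{p+q}(\D)$, so the differential of $s_0$ at $g_0$ is
\[
Z\longmapsto h^{-1}Z^\dagger h
\]
on $T_{g_0}G\simeq\Mat_{p+q}(\D)$. By \eqref{eq:s-normalization}, $s_0$ normalizes $L$. Hence this linear map preserves the orbit tangent space $\fh g_0+g_0\fh$ and descends to $N_{g_0}$. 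It therefore suffices to apply it to the explicit representatives of \eqref{eq:group-normal} and reduce modulo orbit tangents.

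\emph{Compatibility with the summands.} The element $h=\diag(I_{a+\Delta},I_b,R,I_a,I_b,RC)$ is block diagonal. Each summand (identity, exchange, generic) is a pair of block positions $\{i,i+3\}$, and taking adjoints sends block position $(i,j)$ to $(j,i)$. So the map $Z\mapsto h^{-1}Z^\dagger h$ preserves the block decomposition by summands, and we may compute on each summand separately, as in the proof of Lemma~\ref{lem:group-normal}.

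\emph{The three summands.}
\begin{itemize}
\item On the identity summand, $h$ is the identity. The representative $\left(\begin{smallmatrix}0&X_0\\Y_0&0\end{smallmatrix}\right)$ goes to $\left(\begin{smallmatrix}0&Y_0^\dagger\\X_0^\dagger&0\end{smallmatrix}\right)$, i.e.\ $(X_0,Y_0)\mapsto(Y_0^\dagger,X_0^\dagger)$.
\item On the exchange summand, $h$ is again the identity. The representative $\diag(X_1,Y_1)$ goes to $\diag(X_1^\dagger,Y_1^\dagger)$.
\item On the generic summand, the representative $\left(\begin{smallmatrix}0&Z\\0&0\end{smallmatrix}\right)$ with $Z\in A_C$ has adjoint $\left(\begin{smallmatrix}0&0\\Z^\dagger&0\end{smallmatrix}\right)$. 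Conjugating by $\diag(R,RC)$ gives
\[
\begin{pmatrix}0&0\\(RC)^{-1}Z^\dagger R&0\end{pmatrix}.
\]
\end{itemize}

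\emph{Main obstacle: reducing the generic representative.} The matrix just obtained is not in standard form, so we apply the identification \eqref{eq:generic-quotient}. Only $M_{21}=C^{-1}R^{-1}Z^\dagger R$ is nonzero, so its class is
\[
CM_{21}=R^{-1}Z^\dagger R \pmod{[\Mat_k(\D),C]}.
\]
It remains to see that this is already the $A_C$-representative. We have $R^{-1}Z^\dagger R=\jmath_C(Z)$, and $\jmath_C$ of \eqref{eq:centralizer-adjoint} preserves $A_C$, using $RC=C^\dagger R$. By the splitting $\Mat_k(\D)=A_C\oplus[\Mat_k(\D),C]$, the standard representative is therefore exactly $R^{-1}Z^\dagger R$.

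Collecting the three summands gives \eqref{eq:normal-involution}. The only step needing care is this generic-summand reduction: tracking the factor $C^{-1}$ coming from $(RC)^{-1}$ and checking that it cancels against the $C$ in \eqref{eq:generic-quotient}.
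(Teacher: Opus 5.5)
Your proposal is correct and is essentially the paper's own computation. You differentiate $s_0$ to $A\mapsto h^{-1}A^{\dagger}h$, read off the identity and exchange summands (where $h$ is trivial), and reduce the generic lower-left block $C^{-1}R^{-1}Z^{\dagger}R$ via \eqref{eq:generic-quotient}, where the $C^{-1}$ cancels. You also check that the differential preserves the orbit tangent space and that $R^{-1}Z^{\dagger}R$ lies in $A_C$, so it is already the standard representative; the paper leaves both points implicit.
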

\begin{proof}
The derivative is $ds_0(A)=h^{-1}A^{\dagger}h$.
On the identity and exchange summands, $h$ is the identity, so
\[
 \begin{pmatrix}0&X_0\\Y_0&0\end{pmatrix}^{\dagger}
 =\begin{pmatrix}0&Y_0^{\dagger}\\X_0^{\dagger}&0\end{pmatrix},\qquad
 \begin{pmatrix}X_1&0\\0&Y_1\end{pmatrix}^{\dagger}
 =\begin{pmatrix}X_1^{\dagger}&0\\0&Y_1^{\dagger}\end{pmatrix}.
\]
On the generic summand,
\[
\begin{aligned}
 ds_0\!\begin{pmatrix}0&Z\\0&0\end{pmatrix}
 &=\begin{pmatrix}R^{-1}&0\\0&C^{-1}R^{-1}\end{pmatrix}
   \begin{pmatrix}0&0\\Z^{\dagger}&0\end{pmatrix}
   \begin{pmatrix}R&0\\0&RC\end{pmatrix}\\
 &=\begin{pmatrix}0&0\\ C^{-1}R^{-1}Z^{\dagger}R&0\end{pmatrix}.
\end{aligned}
\]
Under \eqref{eq:generic-quotient}, the latter matrix represents
$C(C^{-1}R^{-1}Z^{\dagger}R)=R^{-1}Z^{\dagger}R$, as required.
\end{proof}

\subsection{Completion of the proof}
Now we can give the proof of Theorem \ref{thm:group}.
\begin{proof}[Proof of Theorem~\ref{thm:group}]
Let $D\in\cD(N_{g_0})^{\widetilde L_{g_0},\chi}$.
Restricting the equivariance to $L_{g_0}$, we use the decomposition in
Lemma~\ref{lem:group-normal}. Fixing test functions on the other
summands gives a distribution on each summand with the corresponding
stabilizer character.

On $\Mat_{(a+\Delta)\times a}(\D)\oplus\Mat_{a\times(a+\Delta)}(\D)$,
the character is trivial. Theorem~\ref{thm:rectangular}, with
$(p,q)=(a+\Delta,a)$, gives invariance under
$(X_0,Y_0)\mapsto(Y_0^{\dagger},X_0^{\dagger})$.

On $\Mat_b(\D)\oplus\Mat_b(\D)$, the character is
$\gamma(\Nrd u_1/\Nrd v_1)$. Corollary~\ref{cor:balanced-recip},
with $r=b$, gives invariance under
$(X_1,Y_1)\mapsto(X_1^{\dagger},Y_1^{\dagger})$.

On $A_C$, the character is trivial and $A_C^\times$ acts by
conjugation. Equations~\eqref{eq:centralizer-factors} and
\eqref{eq:centralizer-adjoint}, together with
Theorem~\ref{thm:conjugation}, give invariance under
$Z\mapsto R^{-1}Z^\dagger R$.

By Lemma~\ref{lem:normal-involution}, these three symmetries
combine to give $(s_0)_*D=D$.
On the other hand, $\widetilde L_{g_0}$-equivariance gives
$(s_0)_*D=\chi(s_0)D=-D$, so $D=0$.
Thus $\cD(N_{g_0})^{\widetilde L_{g_0},\chi}=0$ for every
representative supplied by Lemma~\ref{lem:closed-orbits}.
Theorem~\ref{thm:HC-descent} now yields
\[
\cD(G)^{\widetilde L,\chi}=0.
\]
For a distribution satisfying \eqref{eq:biequiv}, its adjoint-odd
part lies in this space by \eqref{eq:s-normalization}; hence it is zero. This finishes the proof.
\end{proof}
Theorem \ref{thm:main} follows from Theorem \ref{thm:group} easily.
\begin{proof}[Proof of Theorem~\ref{thm:main}]
	Let $\pi$ be an irreducible smooth representation of $G$.
	The Gelfand--Kazhdan criterion (see \cite{GK} or \cite[Theorem 2.21]{JLX}) asserts that adjoint
	invariance of all distributions satisfying \eqref{eq:biequiv} implies
	\[
	\dim\Hom_H(\pi,\eta)\,
	\dim\Hom_H(\pi^\vee,\eta^{-1})\leq1.
	\]
	Theorem~\ref{thm:group} supplies this invariance.
	Corollary~\ref{cor:contragredient} gives
	$\pi^\vee\simeq\pi\circ\theta$, where $\theta(g)=(g^{\dagger})^{-1}$. Since $\theta(H)=H$ and
	$\eta(\theta(h))=\eta(h)^{-1}$, we obtain
	\[
	\Hom_H(\pi^\vee,\eta^{-1})\simeq\Hom_H(\pi,\eta).
	\]
	Hence $\dim\Hom_H(\pi,\eta)^2\leq1$, which proves the theorem.
\end{proof}
\begin{remark} In order to end this paper,
	we would like to highlight the fact that this method does not work for the case when $d=\deg_F \D\geq 3$ since there is no involution taking $\pi$ to its contragredient $\pi^\vee$ (see \cite{Ragh}). 
\end{remark}

\end{document}